\newtheorem{thm}{Theorem}[section]
\newtheorem{prop}[thm]{Proposition}
\newtheorem{lem}[thm]{Lemma}
\newtheorem{cor}[thm]{Corollary}
\newtheorem{rem}[thm]{Remark}
\newtheorem{ex}[thm]{Example}
\newtheorem{quest}[thm]{Question}
\theoremstyle{definition}
\newtheorem{definition}[thm]{Definition}
\theoremstyle{remark}
\numberwithin{equation}{section}
\newcommand{\GL}{\operatorname{GL}}
\newcommand{\SL}{\operatorname{SL}}
\newcommand{\bu}{{\mathbf u}}
\newcommand{\buc}{\bu^{\times}_D}
\newcommand{\bucl}{\bu^{\times}_{D,\ell}}
\newcommand{\cH}{{\mathcal H}}
\newcommand{\cL}{{\mathcal L}}
\newcommand{\cC}{{\mathcal C}}
\newcommand{\cO}{{\mathcal O}}
\newcommand{\cF}{{\mathcal F}}
\newcommand{\cK}{{\mathcal K}}
\newcommand{\C}{{\mathbb C}}
\newcommand{\R}{{\mathbb R}}
\newcommand{\pp}{\mathbb{P}}
\newcommand{\Z}{{\mathbb Z}}
\newcommand{\T}{{\mathbb T}}
\newcommand{\Hom}{\operatorname{Hom}}
\newcommand{\Span}{\operatorname{Span}}
\newcommand{\Proj}{\operatorname{Proj}}
\newcommand{\Gr}{\mathbb{G}}
\newcommand{\codim}{\operatorname{codim}}
\newcommand{\reas}{reasonable}
\newcommand{\vr}{very reasonable}
\begin{document}

\title{Livsic-type determinantal representations and Hyperbolicity}


\author{E. Shamovich}
\address{Department of Mathematics, Ben-Gurion University of the Negev}
\email{shamovic@math.bgu.ac.il}
\thanks{The research of E. S. was partially carried out during the visits to the of Mathematics and Statistics of the University of Konstanz, supported by the EDEN Erasmus Mundus program (30.12.2013 - 30.6.2014) and to the MFO, supported by the Leibnitz graduate student program (6.4.2014-12.4.2014). The research of E. S. was also supported by the Negev fellowship of the Kreitman school of the Ben Gurion University of the Negev.}

\author{V. Vinnikov}
\address{Department of Mathematics, Ben-Gurion University of the Negev}
\email{vinnikov@math.bgu.ac.il}

\thanks{Both authors were partially supported by
US--Israel  BSF grant 2010432.}

\begin{abstract}
Hyperbolic homogeneous polynomials with real coefficients, i.e.,
hyperbolic real projective hypersurfaces, and their determinantal representations,
play a key role in the emerging field of convex algebraic geometry.
In this paper we consider a natural notion of hyperbolicity for a real
subvariety $X \subset \pp^d$ of an arbitrary codimension $\ell$ with respect to
a real $\ell - 1$-dimensional linear subspace $V \subset \pp^d$
and study its basic properties.
We also consider a special kind of determinantal representations
that we call Livsic-type and a nice subclass of these that we call \vr{}.
Much like in the case of hypersurfaces ($\ell=1$), the existence
of a definite Hermitian \vr{} Livsic-type determinantal representation implies hyperbolicity.
We show that every curve admits a \vr{} Livsic-type determinantal representation.
Our basic tools are Cauchy kernels for line bundles and the notion of the Bezoutian for two meromorphic functions
on a compact Riemann surface that we introduce.
We then proceed to show that every real curve in $\pp^d$ hyperbolic
with respect to some real $d-2$-dimensional linear subspace admits
a definite Hermitian, or even real symmetric, \vr{} Livsic-type determinantal representation.
\end{abstract}

\maketitle


\section{Introduction}

The study of hyperbolic polynomials originated with the theory of partial differential equations.
A linear partial differential equation with constant coefficients is called hyperbolic
if there exists $a \in \pp^d(\R)$ such that the symbol $p$, considered as a homogeneous polynomial,
satisfies $p(a) \neq 0$ and $p(a + tx) = 0$ only if $t \in \R$ for every $x \in \pp^d(\R)$.
This led G\"{a}rding \cite{Ga51,Ga59} and Lax \cite{Lax58} to consider such polynomials
and the hypersurfaces $X(\R)=\left\{x \in \pp^d(\R) \colon p(x)=0\right\}$ they define.
In particular, G\"{a}rding proved in \cite{Ga59}
that if $p$ is hyperbolic with respect to $a$ as above
then the connected component $C$ of $a$ in $\pp^d(\R) \setminus X(\R)$ is convex and $p$
is hyperbolic with respect to any $a'$ in $C$
(in the case when $X$ is irreducible or $X(\R)$ is smooth,
$C$ simply consists of all $a' \in \pp^d(\R)$ such that $p$
is hyperbolic with respect to $a'$).
More precisely,
the cone over the set $C$ in $\R^{d + 1}$ has two connected components, each one a convex cone.
During the last two decades these hyperbolicity cones came to play an important role in optimization
and related fields \cite{Gu97,BGLS01,Re06}.
Among other applications, hyperbolic polynomials played a key role
in the recent proof by Marcus, Spielman and Srivastava
of the Kadison--Singer conjecture in operator algebras \cite{MSS14}.

A simple way to manufacture hyperbolic polynomials is to consider Hermitian matrices $A_0,\ldots A_d$
such that $A_0 > 0$,
and set $p(x_0,\ldots,x_d) = \det \left(\sum_{j=0}^d x_j A_j \right)$.
Then since $A_0 > 0$, we see easily (using the fact the eigenvalues of a Hermitian matrix are real)
that $p$ is hyperbolic with respect to $(1:0:\ldots :0)$.
Furthermore, the connected component of $(1,0,\ldots,0)$
in $\{x \in \R^{d+1} \colon p(x) \neq 0\}$ is given
by the linear matrix inequality $\sum_{j=0}^d x_j A_j > 0$,
i.e., the hyperbolicity cone is a spectrahedral cone \cite{RG95}
which is the feasible set of a semidefinite program,
see \cite{NN94,VB96,Nem06} as well as
the recent survey volume \cite{CA G}.
In this case we say that $p$ admits a definite Hermitian determinantal representation.

Using the correspondence between determinantal representations and kernel line bundles \cite{Vin89}
that goes in its essence back to Dixon \cite{D1900},
and a detailed analysis of the real structure of the corresponding Jacobian variety,
it was shown by the second author in \cite{Vin93} that
for a smooth real hyperbolic curve in $\pp^2$,
definite determinantal representations are parametrized by points on a certain distinguished real torus
in the Jacobian.
In particular, every smooth real hyperbolic curve in $\pp^2$ admits a definite determinantal representation,
a fact established previously by Dubrovin \cite{Dub83}.
A technique using the Cauchy kernels for vector bundles was developed in \cite{BV-ZPF}
(following \cite{BV-ZPT})
to provide a construction of determinantal representations for any plane algebraic curve.
This technique was later used by Helton and the second author in \cite{HelVin07}
to prove that every real hyperbolic plane curve admits a definite Hermitian and
even a real symmetric determinantal representation, settling a conjecture of Lax \cite{Lax58}.
(The result in \cite{HelVin07} is in the nonhomogeneous setting of real zero polynomials ---
the explicit translation to the homogeneous setting of hyperbolic polynomials and the connection
to the Lax conjecture were worked out in \cite{LPR05}.)

If we consider hypersurfaces in $\pp^d$ for $d > 2$,
we immediately see by a count of parameters argument \cite{Dic21} or by a Bertini theorem argument as in \cite{Bea00}
that a generic hypersurface does not admit a determinantal representation (except for quadrics and cubics in $\pp^3$).
Determinantal representations of possibly singular and multiple hypersurfaces in $\pp^d$
were considered in details by Kerner and the second author in \cite{KerVin12} to which we also refer for further references.
It was proved by Branden in \cite{Bra11}
that even if we allow multiplicity structure not every real hyperbolic hypersurface
will admit a definite determinantal representation.
We refer to \cite{Vppf} for an up-to-date survey on definite determinantal representations of real hyperbolic hypersurfaces
and linear matrix inequality representations of the corresponding hyperbolicity cones;
see also \cite{Kum} for a recent progress.

In this paper we proceed in a different direction:
we consider determinantal representations and hyperbolicity for subvarieties $X \subset \pp^d$ ($d \geq 2$)
of an arbitrary codimension $\ell \geq 1$,
both in general and in the case of curves.

In Section \ref{sec:det_rep} we define a special kind of determinantal representations that
we call Livsic-type determinantal representations
that generalize both linear determinantal representations of hypersurfaces and
the determinantal representations of curves considered in \cite{LKMV} in the context of multivariable operator theory
and multidimensional systems (vessels).
We then show that a specific subclass of Livsic-type determinantal representations,
that we call \vr{}, has especially nice properties.
In particular, if $X$ admits a \vr{} Livsic-type determinantal representation,
then the associated hypersurface $Y$ in the Grassmanian $\Gr(\ell - 1,d)$
of $\ell-1$-dimensional linear subspaces of $\pp^d$
(that consists of linear subspaces that intersect $X$)
admits a linear determinantal representation.

In Section \ref{sec:hyper} we define the notion of hyperbolicity
for subvarieties of $\pp^d$ of an arbitrary codimension:
we call a real subvariety $X$ hyperbolic with respect to a real linear subspace $V \subset \pp^d$ of dimension $\ell-1$
if $X \cap V = \emptyset$ and for every real linear subspace $U \subset \pp^d$ of dimension $\ell$ containing $V$,
$X \cap U$ consists of only real points.
Equivalently, every real $1$-dimensional Schubert cycle through $V$ in the Grassmanian
intersects the associated hypersurface $Y$ in real points only.
We show that the connected component $C(V)$ of $V$ in $\Gr(\ell - 1,d)(\R) \setminus Y(\R)$
has a natural convexity property that we call slice-convexity,
and that $X$ is hyperbolic with respect to any $V' \in C(V)$.
It is an open question whether $C(V)$ (more precisely any of the two connected components of the cone over it in the Pl\"{u}cker embedding) has a different property of being extendably convex in the sense of Buseman \cite{Bus61}
(an intersection of a convex set in the ambient space with the image of the Grassmanian), or whether, in the case when $X$ is irreducible or when $X(\R)$ is smooth,
$C(V)$ coincides with the set of all $\ell-1$-dimensional real linear subspaces $V'$ so that
$X$ is hyperbolic with respect to $V'$.

We also demonstrate that if $X$ admits a very reasonable Livsic-type determinantal representations
that is definite Hermitian, then $X$ is hyperbolic.

Sections \ref{sec:bezoutian}--\ref{sec:hyper_curve} are dedicated to Livsic-type determinantal representations
and hyperbolicity for curves in $\pp^d$.
While our methods are a generalization of the methods used in \cite{BV-ZPF} and \cite{HelVin07},
it is both more natural and more convenient to set them in the framework of Bezoutians
on a compact Riemann surface.

In Section \ref{sec:bezoutian} we introduce the notion of a Bezoutian of two meromorphic functions
with simple poles on a compact Riemann surface;
this notion originated in the study of Hankel-type realizations for meromorphic bundle maps
on a compact Riemann surface as transfer functions of overdetermined 2D systems (vessels) \cite{bvh},
and seems to be appropriate for studying localization of zeroes just as in the classical (genus zero) case. Similar notions of resultants of meromorphic functions on a Riemann surface were considered by Gustafsson and Tkachev in \cite{GuTk09} and \cite{GuTk11}.
We limit ourselves to proving several basic properties of the Bezoutian
that are essential for our purposes here, and postpone a more general development of the theory and applications
(as well as clarifying the relation to the work of Shapiro and the second author \cite{AS,ASV1,ASV2})
to a future publication.
In Section \ref{sec:div_func} we consider Bezoutians on compact real Riemann surfaces
(a Riemann surface equipped with an antiholomorphic involution $\tau$ or equivalently the desingularization
of a real algebraic curve)
and in particular on those of dividing type. We show how the Bezoutian relates to dividing functions,
i.e., real meromorphic functions that map a half of the compact real Riemann surfaces of dividing type
onto the upper half plane
and that are closely related to the hyperbolicity of the Riemann surface birationally embedded as an algebraic
curve in a projective space.

In Section \ref{sec:det_rep_curve}
we use the Bezoutians to show that every curve $X \subset \pp^d$ admits a \vr{} Livsic-type determinantal representations,
generalizing the construction of \cite{BV-ZPF} in the case $d=2$
and (essentially) the construction of Kravitsky \cite{Kra} (see also \cite{LKMV})
in the case of rational curves (genus zero).
Finally, in Section \ref{sec:hyper_curve} we extend the results of \cite{HelVin07} in the case $d=2$:
we show that every curve $X$ in $\pp^d$ hyperbolic
with respect to some $d-2$-dimensional real linear subspace $V \subset \pp^d$ admits
a definite Hermitian and even real symmetric \vr{} Livsic-type determinantal representation.
Furthermore, when $X$ is irreducible,
the set of all $V' \in \Gr(\ell - 1,d)(\R)$ such that $X$ is hyperbolic with respect to $V'$
is given by a linear matrix inequality (in the coordinates of the Pl\" ucker embedding).

Our terminology is quite standard.
All our varieties are over the field $\C$ of complex numbers,
are reduced unless explicitly stated otherwise, and we identify the variety with the set of its (closed) points
over $\C$.
We say that $X \subset \pp^d$ is a real subvariety if $X$ is defined over the reals
(i.e., by homogeneous polynomial equations with real coefficients);
we then denote by $X(\R)$ the set of points of $X$ that are rational over $\R$
(i.e., have real coordinates).
When we consider the dimension or the codimension of $X$ we assume that $X$ has pure dimension
(i.e., all the irreducible components of $X$ have the same dimension)
unless the converse is explicitly specified.
We denote by $\Gr(m,d)$ the Grassmanian of $m$-dimensional linear subspaces in the $d$-dimensional projective space $\pp^d$.

We will assume that $\C^{d + 1}$ is equipped with the standard scalar product.
For $V$ a subspace in $\C^{d   1}$ we will write $V^{\perp}$ for the orthogonal complement of $V$;
note that if a subspace is real then so is its orthogonal complement.
For most of our purposes $V^{\perp}$ could have been replaced by any complementary subspace, but the use
of the orthogonal complement will streamline some proofs and simplify notations.
We will also use the standard scalar product
to identify $\C^{d   1}$ with its dual,
a fact that we will use later both implicitly and explicitly.

\section{Livsic-type Determinantal Representations} \label{sec:det_rep}

In his work M. S. Livsic and his collaborators considered plane algebraic curves obtained from matrices $\gamma_{01}, \gamma_{02}, \gamma_{12} \in M_n(\C)$ by:
\begin{equation*}
\det \left( \mu_2 \gamma_{01} - \mu_1 \gamma_{02} + \mu_0 \gamma_{12} \right).
\end{equation*}
Now consider the tensor in $\wedge^2 \C^3 \otimes M_n(\C)$ given by $\gamma = \gamma_{01} (e_0 \wedge e_1) + \gamma_{02} (e_0 \wedge e_2) + \gamma_{12} (e_1 \wedge e_2)$, where $e_0$, $e_1$ and $e_2$ form a basis of $\C^3$. For every point $\mu = \mu_0 e_0 + \mu_1 e_1 + \mu_2 e_2  \in \C^3$ one has that:
\begin{equation*}
\gamma \wedge \mu = \left( \mu_2 \gamma_{01} - \mu_1 \gamma_{02} + \mu_0 \gamma_{12} \right) e_0 \wedge e_1 \wedge e_2.
\end{equation*}
Fixing an orientation on $\C^3$, we can identify $\gamma \wedge \mu$ with a matrix in $M_n(\C)$. Note that the determinant of $\gamma \wedge \mu$ is zero if and only if there exists a vector $0 \neq v \in \C^n$, such that $(\gamma \wedge \mu) v = 0$. Furthermore it is invariant under the action of $\C^{\times}$ on $\C^3$ and hence we can identify the curve with the following set of points:
\begin{equation*}
D(\gamma) = \left\{ \mu \in \pp^2 \mid \exists \, v \in \C^n \setminus {0} \,, (\gamma \wedge \mu)v = 0. \right\}. 
\end{equation*}

We will say that a projective plane curve, $X$, admits a Livsic-type determinantal representation if there exists $\gamma \in \wedge^2 \C^3 \otimes M_n(\C)$, such that $X = D(\gamma)$. It has been shown by the second author that every projective plane curve admits a Livsic-type determinantal representation (cf. \cite{Vin89, Vin93, LKMV}).

Each element $\gamma \in \wedge^{k+1} \C^{d+1} \otimes M_n(\C)$ can be thought of as a linear map $\gamma \colon \C^n \to \wedge^{k+1} \C^{d+1} \otimes \C^n$. Fix $e_0,\ldots,e_d$, a basis of $\C^{d+1}$. For $I \subset \{0,\ldots,d\}$ we will write $e_I = e_{i_1} \wedge \ldots \wedge e_{i_r}$, where $I = \{i_1,\ldots,i_r\}$ and $i_1 < i_2 < \ldots < i_r$. Then:
\begin{equation*}
\gamma = \sum_{I \subset \{0,\ldots,d\}, |I| = k + 1} \gamma_I e_I.
\end{equation*}
Thus for $u \in \C^n$ we get $\gamma u = \sum_{I \subset \{0,\ldots,d\}, |I| = k + 1} \gamma_I u \otimes e_I$. Now write $\mu = \sum_{j=0}^d \mu_j e_j$ and for every $J \subset \{0,\ldots,d\}$, $|J| = k+2$ set:
\begin{equation*} 
(\gamma \wedge \mu)_J = \sum_{j \in J} (-1)^{\sigma(J,j)} \mu_j \gamma_{J \setminus \{j\}}.
\end{equation*}
Here $(-1)^{\sigma(J,j)}$ is the sign of the permutation required to obtain the form described above, i.e., $\sigma(J,j) = |\{j^{\prime} \in J \mid j^{\prime} > j \}|$. Conclude that: \begin{equation} \label{eq:gamma_mu_coord}
\gamma \wedge \mu = \sum_{J \subset \{0,\ldots,d\}, |J| = k + 2} (\gamma \wedge \mu)_J e_J.
\end{equation}
Next take $V \subset \pp^d$ a plane of dimension $d-k-1$ spanned by $v_0,\ldots,v_{d-k-1}$. Clearly $\gamma \wedge v_0 \wedge \ldots \wedge v_{d-k-1} \in \wedge^{d+1} \C^{d+1} \otimes M_N(\C)$. We fix an orientation and identify the later space with $\C$ and thus $\gamma \wedge v_0 \wedge \ldots \wedge v_{d-k-1}$ with a matrix. With respect to the fixed basis we have that:
\[
v_0 \wedge \ldots \wedge v_{d-k-1} = \sum_{J \subset \{0,\ldots,d\}, |J| = d-k} p(V)_J e_J.
\]
Here $p(V)_J$ are the coordinates of the vector $v_0 \wedge \ldots \wedge v_{d-k-1}$ with respect to our basis. Hence, using our identification, we can write:
\begin{equation} \label{eq:gamma_v_coord}
\gamma \wedge v_0 \wedge \ldots \wedge v_{d-k-1} = \sum_{I \subset \{0,\ldots,d\}, |I| = k + 1} (-1)^{\sigma(I)} p(V)_{I^c} \gamma_I.
\end{equation}
Here $I^c = \{0,\ldots,d\} \setminus I$ and $(-1)^{\sigma(I)} e_0 \wedge \ldots \wedge e_d = e_I \wedge e_J$. 

Now we can generalize the definition for curves.
\begin{definition}
Given a tensor $\gamma \in \wedge^{k+1} \C^{d+1} \otimes M_n(\C)$, we define the following set:
\begin{equation*}
D(\gamma) = \left\{ \mu \in \pp^d \mid \exists \, v \in \C^n \setminus {0} \,, (\gamma \wedge \mu)v = 0. \right\}. 
\end{equation*}
Here we consider $\gamma \wedge \mu$ as a mapping from $\C^n$ to $\wedge^{k+2} \C^{d+1} \otimes \C^n$. We will say that $\gamma$ is non-degenerate if there exist $v_0,\ldots,v_{d-k-1} \in \C^{d+1}$ linearly independent, such that $\gamma \wedge v_0 \wedge \ldots \wedge v_{d-k-1}$ is invertible, considered as a matrix in $M_n(\C)$.
\end{definition}

Note that non-degeneracy depends only on the $d-k-1$-plane in $\pp^d$, spanned by the vectors $v_0,\ldots,v_{d-k-1}$. Let $V \subset \pp^d$ be this plane, then we denote $\gamma(V) = \gamma \wedge v_0 \wedge \ldots \wedge v_{d-k-1}$. Whenever necessary we will identify $\gamma(V)$ with a matrix in $M_n(\C)$ via an orientation as in \eqref{eq:gamma_v_coord}.

Using \eqref{eq:gamma_mu_coord} we have:
\[
D(\gamma) = \left\{ \mu \in \pp^d \mid \cap_{J \subset \{0,\ldots,d\}, |J| = k + 2} \ker (\gamma \wedge \mu)_J \neq \{0\} \right\}.
\]

\begin{rem}
Note that $D(\gamma)$ is cut out by the ideal generated by the maximal minors of $\gamma \wedge \mu$, considered as a matrix of linear forms in the entries of $\mu$. Alternatively, one can consider it as generated by polynomials of the following form:
\[
\det \left(\sum_{J \subset \{0,\ldots,d\}, |J| = k + 2} m_J (\gamma \wedge \mu)_J\right).\]
Here $m_J \in M_n(\C)$ are arbitrary matrices (cf. \cite[Prop.\ 8.2.1]{LKMV} for the case when $k=1$, the proof of the general case is identical). However, $D(\gamma)$ with this closed subscheme structure will generally be non-reduced and might even have embedded components. We can thus conclude that $D(\gamma)$ is closed subset of $\pp^d$.
\end{rem}

\begin{lem} \label{lem:inclusion}
Fix some tensor $\gamma \in \wedge^{k+1} \C^{d+1} \otimes M_n(\C)$ and a $d-k-1$-plane $V \subset \pp^d$. Then the intersection of $V$ and $D(\gamma)$ is non-empty implies that:
\begin{equation*}
\det \gamma(V) = 0.
\end{equation*} 
\end{lem}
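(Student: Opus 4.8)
The plan is to manufacture, out of a point of $V \cap D(\gamma)$ together with its associated kernel vector, a single nonzero vector lying in the kernel of the matrix $\gamma(V)$, which immediately gives $\det\gamma(V)=0$.

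First I would set up coordinates adapted to the point of intersection. Suppose $\mu \in V \cap D(\gamma)$. Since $\mu$ is a point of $\pp^d$ lying on $V$, the line it spans in $\C^{d+1}$ is contained in the $(d-k)$-dimensional linear subspace underlying $V$; extend a homogeneous representative of $\mu$ to a basis $v_0,\ldots,v_{d-k-1}$ of that subspace with $v_0 = \mu$ (abusing notation by writing $\mu$ for both the point and the chosen representative). Here I would remark that changing the basis of $V$ multiplies $v_0\wedge\cdots\wedge v_{d-k-1}$, hence $\gamma(V)$, by a nonzero scalar, so that the statement ``$\det\gamma(V)=0$'' is independent of the choice and we may freely use this particular basis.

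Next, since $\mu\in D(\gamma)$, choose $v\in\C^n\setminus\{0\}$ with $(\gamma\wedge\mu)v = 0$ in $\wedge^{k+2}\C^{d+1}\otimes\C^n$. Writing $\gamma = \sum_{|I|=k+1}\gamma_I e_I$, we have $(\gamma\wedge\mu)v = \sum_I (\gamma_I v)\otimes(e_I\wedge\mu)$, whereas, using \eqref{eq:gamma_v_coord} with $v_0=\mu$ and the orientation identification $\wedge^{d+1}\C^{d+1}\otimes\C^n\cong\C^n$, we get $\gamma(V)v = \sum_I (\gamma_I v)\otimes(e_I\wedge\mu\wedge v_1\wedge\cdots\wedge v_{d-k-1})$. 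Introducing the linear map $L\colon \wedge^{k+2}\C^{d+1}\to\wedge^{d+1}\C^{d+1}$, $\omega\mapsto \omega\wedge v_1\wedge\cdots\wedge v_{d-k-1}$, one sees that $\gamma(V)v = (L\otimes\operatorname{id}_{\C^n})\bigl((\gamma\wedge\mu)v\bigr) = 0$. Since $v\neq 0$, the matrix $\gamma(V)\in M_n(\C)$ is singular, i.e.\ $\det\gamma(V)=0$.

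I do not expect a genuine obstacle in this argument. The two points that need a little care are the basis-independence of the condition $\det\gamma(V)=0$, and the observation that wedging on the right by the fixed vectors $v_1,\ldots,v_{d-k-1}$ is a linear operation on the $\wedge^{\bullet}\C^{d+1}$-factor and hence commutes with evaluating $\gamma\wedge\mu$ on $v$; reconciling the sign conventions between \eqref{eq:gamma_mu_coord} and \eqref{eq:gamma_v_coord} is routine bookkeeping that the orientation identification absorbs.
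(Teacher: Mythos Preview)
Your proof is correct and follows essentially the same approach as the paper's: both arguments take a point $\mu \in V \cap D(\gamma)$ with kernel vector $u$, and then wedge the relation $(\gamma \wedge \mu)u = 0$ with the remaining basis vectors of $V$ to obtain $\gamma(V)u = 0$. The only cosmetic difference is that you adapt the basis so that $v_0 = \mu$, whereas the paper writes $\mu = \sum_j t_j v_j$ for an arbitrary basis and then wedges with $v_0 \wedge \cdots \wedge \widehat{v}_j \wedge \cdots \wedge v_{d-k-1}$ for some index $j$ with $t_j \neq 0$.
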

\begin{proof}
Every point on $V$ is of the form $t_0 v_0 + \ldots t_{d-k-1} v_{d-k-1}$, for some basis of $V$. If such a point is on $D(\gamma)$, there exists some non-zero $u \in \C^n$, such that:
\begin{equation*}
\sum_{j=0}^{d-k-1} t_j (\gamma \wedge v_j) u = 0.
\end{equation*}
Now since some $t_j \neq 0$, taking the exterior product with $v_0 \wedge \ldots \wedge \widehat{v}_j \wedge \ldots \wedge v_l$ we get that:
\begin{equation*}
\left(\gamma \wedge v_0 \wedge \ldots \wedge v_{d-k-1} \right) u = 0.
\end{equation*}
\end{proof}

We identify $\Gr(d-k-1,d)$ with its image in $\pp(\wedge^{d-k} \C^{d+1})$ via the Pl\"{u}cker embedding. Recall that the Pl\"{u}cker embedding is the map sending a subspace $V \subset \C^{d+1}$ of dimension $\ell$ to the line $\wedge^\ell V \subset \wedge^{\ell} \C^{d+1}$. Thus we get an embedding of the Grassmannian into $\pp(\wedge^{\ell} \C^{d+1})$. We denote by $v_0 \wedge \ldots \wedge v_{d-k-1}$ the Pl\"{u}cker coordinates of a $d-k-1$-plane $V$ in $\pp^d$. In this setting $\gamma(V)$ defines a matrix of linear forms on the Grassmannian. Note that the $p(V)_J$ in \eqref{eq:gamma_v_coord} are precisely the Pl\"{u}cker coordinates with respect to the basis $e_0,\ldots,e_d$.

\begin{cor}
For a non-degenerate $\gamma$, we have that $\dim D(\gamma) \leq k$. Therefore, for a generic choice of $d-k-1$-plane $V$ we have that $\gamma(V)$ is invertible.
\end{cor}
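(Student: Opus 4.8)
The plan is to deduce both assertions from Lemma~\ref{lem:inclusion} together with the classical projective dimension theorem in $\pp^d$; once the right objects are set up the argument is very short.

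First I would observe, using \eqref{eq:gamma_v_coord}, that the entries of $\gamma(V)$ are linear forms in the Pl\"ucker coordinates $p(V)_J$ of $V$, so $\det\gamma(V)$ is a homogeneous polynomial of degree $n$ in these coordinates; replacing the spanning vectors $v_0,\ldots,v_{d-k-1}$ of $V$ by another basis of $V$ multiplies $\det\gamma(V)$ by the $n$-th power of a nonzero determinant, so the zero set $\tilde Y := \{V \in \Gr(d-k-1,d) : \det\gamma(V) = 0\}$ is a well-defined closed subvariety of the Grassmannian (a hypersurface section, or all of $\Gr(d-k-1,d)$). Non-degeneracy of $\gamma$ says exactly that $\det\gamma(V)$ does not vanish identically, and since the Grassmannian is irreducible this makes $\tilde Y$ a proper closed subvariety with dense Zariski-open complement. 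The last assertion of the corollary is then immediate: for $V$ in that dense open set, i.e. for a generic $(d-k-1)$-plane, $\gamma(V)$ is invertible.

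For the dimension bound, recall from the Remark above that $D(\gamma)$ is closed in $\pp^d$, so it is enough to show that each irreducible component $Z$ of $D(\gamma)$ has $\dim Z \le k$. I would argue by contradiction: if $\dim Z \ge k+1$, then for \emph{every} $(d-k-1)$-plane $V$ one has $\dim Z + \dim V \ge (k+1)+(d-k-1) = d$, so the projective dimension theorem forces $Z \cap V \ne \emptyset$, hence $D(\gamma)\cap V \ne \emptyset$; by Lemma~\ref{lem:inclusion} this would give $\det\gamma(V) = 0$ for every $V$, i.e. $\tilde Y = \Gr(d-k-1,d)$, contradicting non-degeneracy. Therefore $\dim D(\gamma) \le k$.

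I do not anticipate a real obstacle. The only slightly delicate point is the first step --- checking that $\det\gamma(V)$ descends to a genuine, and (by non-degeneracy) nonzero, section of a line bundle on $\Gr(d-k-1,d)$, so that ``generic $V$'' is meaningful. It is worth noting that one cannot simply count the dimension of the incidence variety $\{(x,V) : x \in D(\gamma)\cap V\}$: such a count only controls the generic fibre over the Grassmannian, whereas what actually makes the argument work is that a component of $D(\gamma)$ of dimension $\ge k+1$ must meet \emph{every} $(d-k-1)$-plane, contradicting Lemma~\ref{lem:inclusion} already for the generic one.
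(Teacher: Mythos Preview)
Your proof is correct and follows essentially the same approach as the paper. The paper's argument is extremely terse: it simply observes that $\det\gamma(V)$ is a section of a line bundle on $\Gr(d-k-1,d)$ that, by non-degeneracy, does not vanish identically, hence its zero locus is a hypersurface. Your version spells out what the paper leaves implicit for the bound $\dim D(\gamma)\le k$, namely the combination of Lemma~\ref{lem:inclusion} with the projective dimension theorem.
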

\begin{proof}
Note that $\det \gamma(V)$ is a section of a line bundle on $\Gr(d-k-1,d)$. Since $\gamma$ is non-degenerate, this section does not vanish identically. Conclude that the zeroes are a hypersurface.
\end{proof}

Let $S = \C[x_0,\ldots,x_d]$ with the natural grading, then $\gamma \wedge \mu$, considered as a matrix of linear forms in the entries of $\mu$, is a map between the graded modules: 
\begin{equation*}
\gamma \wedge \mu \colon S(-1)^n \to S^{n {d+1 \choose k+2}}.
\end{equation*} 

\begin{prop} \label{prop:deg_locus}
The set $D(\gamma)$ is the degeneration locus of a vector bundle map on $\pp^d$. This, in particular, is another way to see that $D(\gamma)$ is closed.
\end{prop}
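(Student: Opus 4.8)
The plan is to realize $D(\gamma)$ as a degeneracy locus of the locally free sheaf morphism obtained by sheafifying the graded--module map $\gamma\wedge\mu\colon S(-1)^n \to S^{N}$, where $N = n\binom{d+1}{k+2}$, and then to quote the standard fact that degeneracy loci are closed.

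First I would apply the sheafification functor $\widetilde{(\,\cdot\,)}$ on $\pp^d = \Proj S$ to the displayed map of graded modules. Since $\widetilde{S(-1)} = \cO_{\pp^d}(-1)$ and $\widetilde{S} = \cO_{\pp^d}$, this produces a morphism of vector bundles
\[
\Phi\colon \cO_{\pp^d}(-1)^{\oplus n}\longrightarrow \cO_{\pp^d}^{\oplus N}
\]
whose matrix, in each standard affine chart, consists of the linear forms $(\gamma\wedge\mu)_J$ of \eqref{eq:gamma_mu_coord}.

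Next I would identify the fiber maps of $\Phi$. Fixing $\mu\in\pp^d$, a representative vector for $\mu$ and a local trivialization of $\cO_{\pp^d}(-1)$ near $\mu$, the induced map on fibers $\Phi(\mu)\colon \C^n\to\C^N$ is precisely multiplication by the matrix $\gamma\wedge\mu$; changing the representative or the trivialization only rescales $\Phi(\mu)$, so its rank is a well-defined function of $\mu\in\pp^d$. The columns of $\gamma\wedge\mu$ are linearly dependent exactly when $(\gamma\wedge\mu)v = 0$ for some $0\neq v\in\C^n$, so
\[
D(\gamma) = \{\mu\in\pp^d : \operatorname{rank}\Phi(\mu)\le n-1\},
\]
the $(n-1)$-st degeneracy locus $D_{n-1}(\Phi)$ of $\Phi$, which is the content of the claim.

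Finally I would invoke the standard description of $D_{n-1}(\Phi)$: locally it is cut out by the $n\times n$ minors of the matrix of $\Phi$, and globally these minors are sections of $\cO_{\pp^d}(n)$ (they are homogeneous of degree $n$ in the entries of $\mu$, matching the Remark preceding the proposition), so $D(\gamma) = D_{n-1}(\Phi)$ is Zariski closed in $\pp^d$. This simultaneously exhibits $D(\gamma)$ as the degeneration locus of a vector bundle map and re-proves that it is closed. There is no real obstacle here; the only point requiring (routine) care is the fiber--level computation together with its $\C^\times$-invariance, which is precisely what makes ``$\operatorname{rank}\Phi(\mu)\le n-1$'' a meaningful condition on $\pp^d$ rather than merely on $\C^{d+1}\setminus\{0\}$.
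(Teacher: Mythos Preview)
Your argument is correct and is essentially the paper's own proof with more detail: apply the $\Proj$ module-to-sheaf correspondence to the graded map $\gamma\wedge\mu$ to obtain a morphism $\cO(-1)^n\to\cO^N$ of vector bundles, and identify $D(\gamma)$ with the locus where this morphism fails to be injective on fibers. The extra care you take about fiberwise identification and $\C^\times$-invariance is fine but not strictly needed beyond what the paper already notes.
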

\begin{proof}
Just apply module to sheaf correspondence for $\Proj$ to the above map, to get:
\begin{equation*}
\gamma \wedge \mu \colon \cO(-1)^n \to  \cO^{n {d+1 \choose k+1}}.
\end{equation*}
The points that belong to $D(\gamma)$ are precisely those points, where the map is not injective on the stalk. Thus $D(\gamma)$ is the degeneration locus of this map.
\end{proof}
Note that the definition is independent of the choice of the coordinates since given any $g \in \GL_{d+1}(\C)$ we have that $\mu \in D(\gamma)$ if and only if $g \mu \in g D(\gamma)$, since the map defined by $\gamma$ changes by a multiplication by an invertible scalar matrix on the left.

\begin{rem}
Following the Beilinson-Gelfand-Gelfand construction one can identify $\wedge^{i-j} \C^{d+1} \cong \Hom(\Omega^i(i),\Omega^j(j))$, for $0 \leq j \leq i \leq 0$. We think of $\Omega^i(i)$ as embedded in $\wedge^i \C^{d+1} \otimes \cO$, where $\cO$ is the sheaf of regular functions on $\pp^d$. Hence in particular $\gamma \in \wedge^{k+1} \C^{d+1} \otimes M_n(\C)$ defines uniquely a map from $\Omega^d(d)^n \cong \cO(-1)^n$ to $\Omega^{d-k-1}(d-k-1)^n$. This map however is not the same map as defined above unless $k = d-1$. There is however a way to change the signs in $\gamma$ to obtain one from the other.
\end{rem}

In general the set $D(\gamma)$ will be empty, unless $d = k + 1$. In order to emphasize a special case when the upper bound is achieved, we make the following definition:

\begin{definition}
A non-degenerate tensor $\gamma \in \wedge^{k+1} \C^{d+1} \otimes M_n(\C)$ will be called \reas{} if $\dim D(\gamma) = k$.
\end{definition}

Recall that an irreducible subvariety $X$ of dimension $k$ of $\pp^d$ defines a class in the $k$-th Chow group of $\pp^d$. It is well known that the $k$-th Chow group of $\pp^d$ is isomorphic to $\Z$ and is generated by the class of a $k$-plane. Therefore $[X] = n [L_k]$ and we call $n$ the degree of $X$. For a pure-dimensional reducible variety, we represent it as a formal sum of its components, therefore its degree is the sum of the degrees of its components. 

It is useful to keep track of the dimension of the kernel of the map $\gamma \wedge \mu$, hence we make the following definition:

\begin{definition}
We define the cycle associated to a non-degenerate $\gamma \in \wedge^{k+1} \C^{d+1} \otimes M_n(\C)$ in $Z_*(\pp^d)$ by:
\begin{equation*}
Z(\gamma) = \sum_{j=1}^r n_j [D_j].
\end{equation*}
Here we denote by $D_j$ the irreducible components of $D(\gamma)$. The numbers $n_j$ are obtained by taking the exact sequence:
\begin{equation*}
0 \to \cO(-1)^n \to  \cO^{n {d+1 \choose k+1}} \to \cC \to 0
\end{equation*}
and pulling it back to $D_j$. Since $D_j$ is in the degeneracy locus, we get the exact sequence:
\begin{equation*}
0 \to \cK \to \cO_{D_j}(-1)^n \to \cO_{D_j}^{n {d+1 \choose k+1}} \to \cC_{D_j} \to 0.
\end{equation*}
We call $\cK$ the kernel sheaf associated to the tensor $\gamma$. The kernel sheaf is a coherent sheaf on $D_j$ and we take $n_j$ to be the dimension of the generic fiber of $\cK$. We define the degree of $\gamma$ to be:
\begin{equation*}
\deg(\gamma) = \int_{\pp^d} [Z(\gamma)][L_{d-k}].
\end{equation*}
Here $[L_{d-k}]$ is the rational equivalence class of the $d-k$-plane in $\pp^d$.
\end{definition}

\begin{rem}
Let $D(\gamma) = D_1 \cup \ldots \cup D_r \cup D^{\prime}$, where $\dim D_j = k$ for each $j$ and they are irreducible and $\dim D^{\prime} < k$. Then a generic $d-k$-dimensional plane $U$ intersects each $D_j$ at $\deg D_j$ distinct points and does not intersect $D^{\prime}$. Let $U_j \subset D_j$ be the open set on which the dimension of the fiber of the kernel sheaf is $n_j$. Since $D_j \setminus U_j$ is a closed subvariety, its dimension is at most $k-1$, hence using the incidence correspondence described below, it is easy to see that a generic $U$ intersects each $D_j$ at points of $U_j$. Hence $\deg \gamma$ is the sum of the dimensions of the fibers of the kernel sheaf at points of intersection with a generic $d-k$-plane. Furthermore, note that for any $d-k$-plane that intersects each $D_j$ at $\deg D_j$ distinct points, the sum of the dimensions of the kernel sheaf fibers at those points is always greater or equal to $\deg \gamma$, since the dimension of the fibers of a coherent sheaf is upper semi-continuous.
\end{rem}

\begin{definition} \label{def:det_rep}
Let $X \subset \pp^d$ be a subvariety of dimension $k$. We say that $X$ admits a Livsic-type determinantal representation if $X = D(\gamma)$ for some non-degenerate tensor $\gamma \in \wedge^{k+1} \C^{d+1} \otimes M_n(\C)$, for some integer $n$. If for some (and hence for every) basis $e_0,\ldots,e_d$ the matrices $\gamma_I$ are symmetric we will say that $X$ admits a symmetric Livsic type determinantal representation. If for some (and hence for every) real basis $e_0,\ldots,e_d$ for $\C^{d+1}$, we have that $\gamma = \sum_{I \subset \{0,\ldots,d\},|I|=k+1} \gamma_I e_I$ with every $\gamma_I$ Hermitian or real symmetric, we will say that $X$ admits a Hermitian or real symmetric Livsic-type determinantal representation, respectively.
\end{definition}

Recall from \cite{Har95} that for every integer $\ell$ we have the incidence correspondence:
\begin{equation*}
\Sigma = \left\{ (x, V) \mid x \in V \right\} \subset \pp^d \times \Gr(\ell,d).
\end{equation*}
We get a diagram by restricting the projection maps to $\Sigma$:
\begin{equation*}
\xymatrix{\Sigma \ar[r]^{p_1} \ar[d]_{p_2} & \pp^d \\ \Gr(\ell,d) & }.
\end{equation*}
Both $p_1$ and $p_2$ are proper and smooth, hence in particular for every closed $X \subset \pp^d$ we have that $p_2(p_1^{-1}(X))$ is closed in $\Gr(\ell,d)$. The fiber of $p_1$ over a point $\mu \in \pp^d$ is isomorphic to $\Gr(\ell-1,d-1)$. The fiber of $p_2$ over $V \in \Gr(\ell,d)$ is isomorphic to $V$ itself. Recall that the dimension of $\Gr(\ell,d)$ is $g_{\ell} = \ell(d-\ell)$. 

Given an irreducible subvariety $X \subset \pp^d$ of dimension $k$ and degree $n$, we know that a generic $d-k-1$-plane does not intersect $X$. Let $\ell = d-k-1$ and $Y = p_2(p_1^{-1}(X))) \subset \Gr(d-k-1,d)$. Furthermore, since generically a $d-k-1$-plane in $\pp^d$ that intersects $X$ does so at a single point, we get that $p_2$ is birational on an open dense subset of $p_1^{-1}(X)$. Since the map $p_1$ is smooth it is in particular flat and of relative dimension $g_{d-k-1} - k -1$. Hence we get a map:
\begin{equation*}
p_1^* \colon A_k(\pp^d) \to A_{g_{d-k-1}-1}(\Sigma).
\end{equation*}
Since $p_2$ is birational on $Y$, $Y$ is a hypersurface in $\Gr(d-k-1,d)$. Furthermore, since $[X] = n[L]$, where $L$ is a $k$-plane in $\pp^d$, we get that:
\begin{equation*}
[Y] = p_{2*} p_1^*([X]) = n p_{2*} p_1^*(L) = n \sigma_1.
\end{equation*}
Here $\sigma_1$ is the first Chern class of the universal quotient bundle on the Grassmannian (one can say that $\sigma_1$ is dual to the rational equivalence class of the intersection of the Grassmannian with a hyperplane in the ambient space of the Pl\"{u}cker embedding). Furthermore, $\sigma_1$ generates $A^1(\Gr(d-k-1,d)) \cong \operatorname{Pic}(\Gr(d-k-1,d))$ (see \cite[Ch.14.6-7]{Ful98}). Since the Grassmannian is non-singular we know that $A^1(\Gr(d-k-1,d)) \cong A_{g_{d-k-1}-1}(\Gr(d-k-1,d))$. Hence the degree of $Y$ equals the degree of $X$. We summarize this discussion in the following well known lemma (see for example \cite{ChvdW37} and \cite[Prop.\ 2.2]{GKZ08}):

\begin{lem} \label{lem:chow-degree}
The hypersurface $Y \subset \Gr(d-k-1,d)$ corresponding to an irreducible subvariety $X \subset \pp^d$  of dimension $k$ under the incidence correspondence is of the same degree as $X$.
\end{lem}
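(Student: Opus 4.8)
The plan is to realise $Y$ as the image of $X$ under the incidence correspondence $\Sigma \subset \pp^d \times \Gr(d-k-1,d)$ and then to read off its degree from the one‑dimensional Picard group of the Grassmannian. Write $\ell = d-k-1$ and $g = g_\ell = \ell(k+1)$. First I would collect the structural facts about $\Sigma$ with its projections $p_1,p_2$: the map $p_1$ is smooth, with fibre over $\mu \in \pp^d$ isomorphic to $\Gr(\ell-1,d-1)$, hence flat of relative dimension $g-k-1$, which yields a flat pullback $p_1^{*}\colon A_k(\pp^d) \to A_{g-1}(\Sigma)$; and $p_2$ is proper, so $p_{2*}$ is defined on cycle classes. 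Since $X$ is irreducible of dimension $k$, the subvariety $p_1^{-1}(X)$ is irreducible of dimension $g-1$, and $Y = p_2\bigl(p_1^{-1}(X)\bigr)$.

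The crucial step is to show that $p_2$ restricts to a birational morphism $p_1^{-1}(X) \to Y$. The geometric content is that a general $\ell$‑plane $V$ meeting $X$ meets it in exactly one reduced point, so that the generic fibre of $p_2|_{p_1^{-1}(X)}$ is a single point; this is a dimension count on the locus of triples $(x,x',V)$ with $x \neq x'$ both lying on $X \cap V$. Granting this, $p_2|_{p_1^{-1}(X)}$ is generically finite of degree one, so $\dim Y = g-1$ (i.e.\ $Y$ is a hypersurface) and $p_{2*}\bigl[p_1^{-1}(X)\bigr] = [Y]$ with multiplicity one. Combining with the flat pullback, $[Y] = p_{2*}p_1^{*}[X]$ in $A_{g-1}(\Gr(\ell,d)) = A^1(\Gr(\ell,d))$.

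It then remains to evaluate $p_{2*}p_1^{*}$ on the generator. Writing $[X] = n[L_k]$ with $L_k$ a $k$‑plane and $n = \deg X$, linearity gives $[Y] = n\, p_{2*}p_1^{*}[L_k]$, and $p_{2*}p_1^{*}[L_k]$ is the class of the locus of $\ell$‑planes meeting the fixed $k$‑plane $L_k$ — once more $p_2$ is generically injective over this locus, since a general $\ell$‑plane through a point of $L_k$ meets $L_k$ only there. This locus is the special Schubert divisor, namely the class $\sigma_1$ generating $A^1(\Gr(\ell,d)) \cong \operatorname{Pic}(\Gr(\ell,d))$, cut out by a hyperplane of the Pl\"ucker embedding. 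Hence $[Y] = n\sigma_1$, and since a hypersurface with class $m\sigma_1$ has Pl\"ucker degree $m$, the degree of $Y$ equals $n = \deg X$.

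I expect the only real obstacle to be the birationality of $p_2$ on $p_1^{-1}(X)$: the statement that a generic $\ell$‑plane through a generic point of $X$ meets $X$ nowhere else needs a genuine, if routine, dimension estimate, whereas everything else is formal manipulation with flat pullback, proper pushforward, and the fact that $\Gr(\ell,d)$ has one‑dimensional Picard group generated by the Pl\"ucker hyperplane class. As an alternative one could simply invoke the classical computation of \cite{ChvdW37} or \cite[Prop.\ 2.2]{GKZ08}, but the argument above keeps the exposition self‑contained.
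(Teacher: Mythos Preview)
Your argument is correct and is essentially the same as the paper's: the paper's proof of this lemma is precisely the discussion preceding it, which uses the flat pullback $p_1^{*}$, the birationality of $p_2$ on $p_1^{-1}(X)$ (from the fact that a generic $(d-k-1)$-plane meeting $X$ does so in a single point), the identity $[Y] = p_{2*}p_1^{*}[X] = n\,p_{2*}p_1^{*}[L_k] = n\sigma_1$, and the fact that $\sigma_1$ generates $A^1(\Gr(d-k-1,d))$; it then cites \cite{ChvdW37} and \cite[Prop.\ 2.2]{GKZ08} exactly as you suggest in your final paragraph. Your sketch actually supplies a little more detail on the birationality step (via the dimension count on triples $(x,x',V)$) than the paper does.
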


For $u \in \C^{d+1}$ linearly independent from $V$ we denote $\gamma(V,i,u) = \gamma \wedge v_0 \wedge \ldots v_{i-1} \wedge u \wedge v_{i+1} \ldots \wedge v_{d-k-1}$. The following Lemma is a generalization of \cite[Eq.\ 2.24-25]{BV-ZPT}.
\begin{lem} \label{lem:sol-space}
Let $\gamma \in \wedge^{k+1} \C^{d+1} \otimes M_n(\C)$ be non-degenerate. Let $V \subset \pp^d$ be a $d-k-1$-plane, such that $\gamma(V)$ is invertible. Let $u \in \C^{d+1}$ be linearly independent from $V$. Then the intersection points of $U$, the $d-k$-plane spanned by $V$ and $u$, with $D(\gamma)$ are in one-to-one correspondence with a subset of the joint eigenvalues of the matrices $\gamma(V)^{-1} \gamma(V,u,i)$, for $i= 0,\ldots, d-k-1$. Furthermore, the fibers of the kernel sheaf at these points are contained in the corresponding joint eigenspaces and thus are linearly independent as subspaces of $\C^n$.
\end{lem}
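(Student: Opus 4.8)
The plan is to set up coordinates adapted to the flag $V \subset U$ and translate the condition "$\mu \in U \cap D(\gamma)$" into a joint eigenvalue problem. Write $V$ with Pl\"ucker representative $v_0 \wedge \ldots \wedge v_{d-k-1}$ and let $u$ be linearly independent from $V$, so that a general point of $U$ is $\mu = t u + s_0 v_0 + \ldots + s_{d-k-1} v_{d-k-1}$. First I would compute $\gamma \wedge \mu \wedge (v_0 \wedge \ldots \widehat{v_i} \ldots \wedge v_{d-k-1})$: since wedging any $v_j$ ($j \ne i$) against the matching factor already present kills that term, only the $t u$ part and the $s_i v_i$ part survive, giving (up to sign, which I will track via the $\sigma$ conventions of \eqref{eq:gamma_v_coord}) an expression of the form $t\,\gamma(V,i,u) + s_i\,\gamma(V)$ — more precisely $\gamma \wedge \mu \wedge v_0 \wedge \ldots \widehat{v_i}\ldots\wedge v_{d-k-1} = \pm\bigl(t\,\gamma(V,i,u) \pm s_i\,\gamma(V)\bigr)$ after reordering. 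Thus if $(\gamma \wedge \mu)u_0 = 0$ for some $0 \ne u_0 \in \C^n$, wedging with each $v_0 \wedge \ldots \widehat{v_i}\ldots\wedge v_{d-k-1}$ yields $\bigl(t\,\gamma(V,i,u) \pm s_i\,\gamma(V)\bigr)u_0 = 0$ for every $i$, hence $\gamma(V)^{-1}\gamma(V,i,u)\,u_0 = \lambda_i u_0$ with $\lambda_i = \mp s_i/t$ (note $t \ne 0$, since $t = 0$ forces $\mu \in V$ and then $U \cap D(\gamma) \ni \mu$ would contradict $\gamma(V)$ invertible via Lemma \ref{lem:inclusion}). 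This shows $u_0$ is a joint eigenvector of the commuting-on-$u_0$ family $\{\gamma(V)^{-1}\gamma(V,i,u)\}_{i=0}^{d-k-1}$ with the eigenvalue tuple $(\lambda_0,\ldots,\lambda_{d-k-1})$ determining the point $\mu$ on $U$ uniquely; this gives the injection from intersection points (together with a choice of kernel vector) into joint eigenvalue tuples.

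The converse direction and the kernel-sheaf statement require a little more care. Given a joint eigenvector $u_0$ with eigenvalues $\lambda_i$, I would reconstruct $\mu = u - \sum_i (\pm\lambda_i) v_i$ (after renormalizing $t=1$) and check, by reversing the wedge computation, that $(\gamma \wedge \mu) u_0$ wedges to zero against every $v_0 \wedge \ldots \widehat{v_i} \ldots \wedge v_{d-k-1}$; since these $d-k$ forms, together with a transversality/spanning argument for $U$ inside $\pp^d$ — concretely, that the forms $v_0 \wedge \ldots \widehat{v_i} \ldots \wedge v_{d-k-1}$ for $i = 0, \ldots, d-k-1$ span the relevant piece needed to detect vanishing of an element of $\wedge^{k+2}\C^{d+1} \otimes \C^n$ modulo the ideal generated by $U$ — one concludes $(\gamma \wedge \mu)u_0 = 0$, i.e. $\mu \in D(\gamma)$ and $u_0$ lies in the kernel of $\gamma \wedge \mu$; this explains why it is only "a subset" of the joint eigenvalues (not every joint eigenvector need produce a point on $U$, and one must also account for whether the reconstructed $\mu$ actually lies in $D(\gamma)$ as opposed to just satisfying the derived relations). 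For the kernel sheaf: the fiber of $\cK$ at a point $\mu \in U \cap D(\gamma)$ is exactly $\ker(\gamma \wedge \mu) \subset \C^n$, and the argument above shows this kernel is contained in the joint eigenspace $\bigcap_i \ker\bigl(\gamma(V)^{-1}\gamma(V,i,u) - \lambda_i\bigr)$; since distinct intersection points give distinct eigenvalue tuples, the corresponding joint eigenspaces (for a commuting family, or at least eigenspaces for distinct eigenvalues of $\gamma(V)^{-1}\gamma(V,0,u)$ already suffice) are linearly independent in $\C^n$, and hence so are the kernel-sheaf fibers.

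I expect the main obstacle to be the sign bookkeeping in the wedge identity $\gamma \wedge \mu \wedge v_0 \wedge \ldots \widehat{v_i}\ldots\wedge v_{d-k-1} = \pm\bigl(t\,\gamma(V,i,u)\pm s_i\,\gamma(V)\bigr)$ and making it compatible with the orientation choices \eqref{eq:gamma_v_coord} so that the eigenvalues $\lambda_i$ match up to consistent signs across all $i$; this is purely mechanical but must be done once carefully, and it is the kind of computation the paper flags as "a generalization of \cite[Eq.\ 2.24-25]{BV-ZPT}". A secondary subtlety is the exact scheme-theoretic meaning of "fibers of the kernel sheaf": I would phrase everything at the level of a fixed point $\mu$ and the vector-space kernel $\ker(\gamma \wedge \mu)$, invoking the definition of $\cK$ via pullback to $D_j$ from the exact sequence in the text, and note that upper semicontinuity plus the generic-fiber-dimension definition of $n_j$ are not needed for this lemma — only the pointwise containment in joint eigenspaces and the resulting linear independence. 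Once the wedge identity is pinned down with correct signs, the rest follows by linear algebra: invertibility of $\gamma(V)$ converts the relations into honest eigenvalue equations, and distinctness of eigenvalues gives independence.
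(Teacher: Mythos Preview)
Your approach is essentially the paper's: write a point of $U\cap D(\gamma)$ as $u+\sum_j t_j v_j$, wedge the relation $(\gamma\wedge\mu)w=0$ with $v_0\wedge\ldots\widehat{v_i}\ldots\wedge v_{d-k-1}$ to obtain $(\gamma(V,u,i)-t_i\,\gamma(V))w=0$, and invoke Lemma~\ref{lem:inclusion} to exclude $t=0$. The converse direction you spend a paragraph on is not needed---the lemma only asserts an injection into the set of joint eigenvalues (hence ``a subset''), and the paper's proof accordingly omits it; the sign bookkeeping you flag also resolves cleanly once you note that $\gamma(V)$ and $\gamma(V,u,i)$ carry the same reordering sign, giving the relation above with a uniform minus sign.
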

\begin{proof}
By Lemma \ref{lem:inclusion}, $V$ does not intersect $D(\gamma)$. However $U$ intersects $D(\gamma)$ in a finite number of points unless $\dim D(\gamma) < k$. Every point in $U \cap D(\gamma)$ is of the form $u + \sum_{j=0}^{d-k-1} t_j v_j$. According to the definition of $D(\gamma)$, there is a vector $w \in \C^n$, such that:
\begin{equation*}
\gamma \wedge \left( u + \sum_{j=0}^{d-k-1} t_j v_j \right) w = 0.
\end{equation*}
Taking the exterior product with $v_0 \wedge \ldots \widehat{v}_i \ldots \wedge v_{d-k-1}$, for some $0 \leq i \leq d-k-1$, we get:
\begin{equation*}
\left( \gamma(V,u,i) - t_i \gamma(V) \right) w = 0.
\end{equation*}
Hence the stalk of the kernel sheaf at each point in the intersection is a subspace of the joint eigenspace of $\gamma(V)^{-1} \gamma(V,u,i)$. We conclude that for distinct points the stalks are linearly independent as subspaces of $\C^n$.
\end{proof}

\begin{cor} \label{cor:degree-bound}
Assume that $\gamma$ is non-degenerate then $\deg(\gamma) \leq n$.
\end{cor}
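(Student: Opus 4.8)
The plan is to pick a generic $d-k$-plane $U \subset \pp^d$ and to compute, in two different ways, the sum $\sum_{x \in U \cap D(\gamma)} \dim \cK_x$ of the dimensions of the kernel sheaf fibers at the points of $U \cap D(\gamma)$. One computation identifies this sum with $\deg(\gamma)$, and the other bounds it by $n$ using Lemma \ref{lem:sol-space}; together they give the claim.

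For the first computation, write $D(\gamma) = D_1 \cup \cdots \cup D_r \cup D'$ with each $D_j$ an irreducible component of dimension $k$ and $\dim D' < k$, which is possible since $\dim D(\gamma) \leq k$ by the Corollary following Lemma \ref{lem:inclusion}. By the Remark following the definition of $Z(\gamma)$, a generic $d-k$-plane $U$ meets each $D_j$ in exactly $\deg D_j$ distinct points, all lying in the open set $U_j \subseteq D_j$ on which $\cK$ has fiber dimension $n_j$, and $U \cap D' = \emptyset$. Hence $\sum_{x \in U \cap D(\gamma)} \dim \cK_x = \sum_{j=1}^r n_j \deg D_j = \int_{\pp^d} [Z(\gamma)][L_{d-k}] = \deg(\gamma)$.

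For the second computation, present $U$ as the span of a $d-k-1$-plane $V$ and a vector $u \in \C^{d+1}$ linearly independent from $V$; since the locus of $V$ with $\det\gamma(V) = 0$ is a proper closed subset of $\Gr(d-k-1,d)$ (by non-degeneracy and the same Corollary), we may assume for generic $U$ that $\gamma(V)$ is invertible. Lemma \ref{lem:sol-space} then shows that the points of $U \cap D(\gamma)$ correspond to distinct joint eigenvalue tuples of the matrices $\gamma(V)^{-1}\gamma(V,u,i)$, $i = 0,\dots,d-k-1$, and that $\cK_x$ at each such point is contained in the corresponding joint eigenspace. Joint eigenspaces attached to distinct joint eigenvalue tuples are linearly independent subspaces of $\C^n$, so $\sum_{x \in U \cap D(\gamma)} \dim \cK_x \leq n$. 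Combining the two computations yields $\deg(\gamma) \leq n$.

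The only point requiring care is that the generic plane $U$ must simultaneously satisfy all of: $\gamma(V)$ invertible for an associated $V$, intersection with each $D_j$ in $\deg D_j$ distinct points lying in $U_j$, and empty intersection with $D'$. Each failure locus is a proper closed subset of the relevant parameter space, so a common generic choice exists; this is the only step needing a dimension count, and it is routine. I expect no real obstacle beyond this bookkeeping, since the substantive input — the eigenvalue localization of the intersection points and the linear independence of the kernel fibers — is already contained in Lemma \ref{lem:sol-space}.
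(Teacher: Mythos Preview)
Your proof is correct and follows essentially the same line as the paper's: choose a generic $d-k$-plane through a $d-k-1$-plane $V$ with $\gamma(V)$ invertible, identify the sum of the kernel-fiber dimensions at the intersection points with $\deg(\gamma)$, and bound that sum by $n$ via the linear independence furnished by Lemma~\ref{lem:sol-space}. The only cosmetic difference is that the paper first discards the lower-dimensional components (since they do not contribute to $\deg(\gamma)$) and then fixes $V$ before varying the plane through it, whereas you keep the $D'$ component explicitly and argue that a generic $U$ misses it; both amount to the same thing.
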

\begin{proof}
The degree of $\gamma$ is independent of irreducible components of $D(\gamma)$ that are of dimension less than $k$. Hence we may assume that $D(\gamma)$ is of pure-dimension $k$. Let $V$ be such that the $\gamma(V)$ is invertible. For every generic $d-k$-plane through $V$ we have that each irreducible component, $D_j$, is intersected at $\deg(D_j)$ distinct points. Now the dimension of a generic fiber is $n_j$. Applying Lemma \ref{lem:sol-space} we get that the sum of the spaces is direct. Therefore the dimension of the space is $\sum_j n_j \deg(D_j) = \deg(\gamma)$.  Since this is a subspace of $\C^n$ we get that $\deg(\gamma) \leq n$.
\end{proof}

\begin{rem}
Note that if $\gamma$ is not reasonable, then $\deg(\gamma) = 0$.
\end{rem}

\begin{definition}
Given a tensor $\gamma \in \wedge^{k+1} \C^{d+1} \otimes M_n(\C)$, we say that $\gamma$ is \vr{} if $\deg(\gamma) = n$.
\end{definition}

\begin{prop} \label{prop:pure_dim}
If a tensor $\gamma$ is \vr{}, then $D(\gamma)$ is of pure dimension $k$.
\end{prop}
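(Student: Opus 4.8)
The plan is to argue by contradiction: suppose $\gamma$ is very reasonable, so $\deg(\gamma) = n$, but $D(\gamma)$ has an irreducible component $D'$ of dimension strictly less than $k$. I want to produce a $(d-k)$-plane $U$ (through a suitable $(d-k-1)$-plane $V$ with $\gamma(V)$ invertible) along which the sum of the dimensions of the kernel-sheaf fibers at the intersection points $U \cap D(\gamma)$ strictly exceeds $n$, contradicting the fact that (by Lemma \ref{lem:sol-space}) those fibers are linearly independent subspaces of $\C^n$.

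First I would set up the generic picture. Write $D(\gamma) = D_1 \cup \cdots \cup D_r \cup D'$ with the $D_j$ the $k$-dimensional components (degrees $\deg D_j$, generic kernel-fiber dimensions $n_j$, so $\deg(\gamma) = \sum_j n_j \deg D_j = n$) and $\dim D' < k$; since $D'$ is nonempty it has dimension at least $0$. Choose $V \in \Gr(d-k-1,d)$ generic so that $\gamma(V)$ is invertible (possible by the Corollary following Proposition \ref{prop:deg_locus}), and so that $V$ meets each $D_j$ transversally-to-the-generic-locus in the sense of the Remark after Corollary \ref{cor:degree-bound}: the $(d-k)$-planes $U \supset V$ generically meet $D_j$ in $\deg D_j$ distinct points lying in the open set $U_j$ where the kernel fiber has dimension exactly $n_j$. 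The key extra requirement is to choose $V$ so that it also meets $D'$: since $\dim D' \geq 0$ and we are free to move $V$ inside the $(g_{d-k-1})$-dimensional Grassmannian, a dimension count on the incidence variety $\{(x,V) : x \in D',\ x \in V\}$ shows that the locus of $(d-k-1)$-planes meeting $D'$ has codimension $k - \dim D' \geq 1$ but is nonempty — however I need more: I need $U$, not merely $V$, to meet $D'$ at a point \emph{not} on any $D_j$, and to count its contribution.

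The cleaner route, which I would actually follow, is: pick a point $x_0 \in D'$ lying on no $D_j$ (possible since $D'$ is not contained in the union of the other components, being a distinct irreducible component), and choose a generic $(d-k)$-plane $U$ through $x_0$ such that $U$ meets each $D_j$ in exactly $\deg D_j$ distinct points of $U_j$ and meets $D'$ only at $x_0$ among a finite set — here I use that $\dim D' < k$ so $\dim(U \cap D') = \dim D' + (d-k) - d < 0$ generically, i.e. a generic $(d-k)$-plane through a fixed point $x_0 \in D'$ meets $D'$ in a zero-dimensional set containing $x_0$; and that such $U$ can still be chosen to contain some $V$ with $\gamma(V)$ invertible (the condition $\det\gamma(V)=0$ cuts out a hypersurface in the Schubert variety of $(d-k-1)$-planes inside $U$, so a generic $V \subset U$ avoids it). Now apply Lemma \ref{lem:sol-space} to this $U$ and $V$: the kernel-sheaf fibers at the points of $U \cap D(\gamma)$ are linearly independent subspaces of $\C^n$. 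Their total dimension is at least $\sum_j n_j \deg D_j$ coming from the points on the $D_j$'s (each contributing $\geq n_j$ by upper semicontinuity, in fact $= n_j$ on $U_j$), \emph{plus} at least $1$ from the fiber at $x_0 \in D'$ (which is nonzero by definition of $D(\gamma)$). Hence $n \geq \big(\sum_j n_j \deg D_j\big) + 1 = \deg(\gamma) + 1 = n + 1$, a contradiction.

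The main obstacle I anticipate is the simultaneous genericity juggling in the middle paragraph: arranging a single $(d-k)$-plane $U$ that (i) passes through a chosen point $x_0$ of the low-dimensional component $D'$, (ii) meets every top-dimensional $D_j$ in exactly $\deg D_j$ reduced points lying in the "good" open sets $U_j$, and (iii) still contains at least one $(d-k-1)$-plane $V$ with $\gamma(V)$ invertible so that Lemma \ref{lem:sol-space} applies. Each condition individually is open-dense or nonempty-of-the-expected-codimension, but showing their intersection is nonempty requires care — essentially an irreducibility argument for the relevant incidence variety parametrizing pairs $(x_0, U)$, or equivalently fibering over $x_0 \in D'$ and noting that the space of $U$'s through $x_0$ is itself a Grassmannian on which conditions (ii) and (iii) are open and (by the generic transversality statements already quoted from the earlier Remarks) nonempty. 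Once that is in place, the contradiction is immediate from Lemma \ref{lem:sol-space} and the additivity $\deg(\gamma) = \sum_j n_j \deg D_j$.
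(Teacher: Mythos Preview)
Your proposal is correct and follows essentially the same route as the paper: pick a point $x_0$ on a putative low-dimensional component away from the top-dimensional ones, choose a generic $(d-k)$-plane $U$ through $x_0$ containing some $(d-k-1)$-plane $V$ with $\gamma(V)$ invertible, and use Lemma~\ref{lem:sol-space} together with the Remark after Corollary~\ref{cor:degree-bound} to force $\deg(\gamma) + 1 \le n$, contradicting very reasonableness. Your treatment of the simultaneous genericity conditions (i)--(iii) is more explicit than the paper's, but the argument is the same.
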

\begin{proof}
By definition $\deg(\gamma) = \int_X [Z(\gamma)][L_{d-k}] = n$. Now if $D(\gamma) = D_1 \cup \ldots \cup D_r$ is the decomposition into irreducible components then $Z(\gamma) = \sum_{j=1}^r n_j D_j$. It suffices to show that if $D(\gamma)$ has an irreducible component $D_{j_0}$ of dimension less than $k$, then $\deg(\gamma) < n$. Fix a point $\mu_0 \in D_{j_0}$ that is not on any other component of $D(\gamma)$. Every $d-k$-plane through $\mu_0$ will be spanned by $\mu_0$ and some $d-k-1$-dimensional plane $V$. Since generically $\gamma(V)$ is invertible, we know that for a generic $d-k$-plane through $\mu_0$ the kernel spaces on the components of dimension $k$ can not span all of $\C^n$. Since the sum of their dimensions is greater or equal to $\deg \gamma$, we conclude that $\deg \gamma < n$.
\end{proof}

Recall that $\gamma(V)$ is a matrix of linear forms on the Grassmannian $\Gr(d-k-1,d)$. One can consider $\gamma(V)$ as a map of vector bundles $\cO_{\Gr(d-k-1,d)}(-1)^n \to \cO_{\Gr(d-k-1,d)}^n$. Let $W \in Z_*(\Gr(d-k-1,d))$ be the cycle of zeroes of the section $\det \gamma(V)$ of $\cO_{\Gr(d-k-1,d)}(n)$, i.e., $W = \sum_{j=1}^r n_j W_j$, where each $W_j$ is a hypersurface and the $n_j$ are the order of zero of $\det \gamma(V)$ on $W_j$. Let us denote by $|W|$ the support of $W$, namely $W = \cup_{j=1}^r W_j$.

Using \eqref{eq:gamma_v_coord} we can write $\det(\gamma(V))$ is a degree $n$ homogeneous polynomial in the coordinate ring of the Grassmannian in Pl\"{u}cker embedding. We can factor this polynomial into irreducible polynomials and each $W_j$ corresponds to an irreducible polynomial and $n_j$ to the multiplicity it appears with in $\det(\gamma(V))$.

Let us recall the definition of a $1$-dimensional Schubert cycle on the Grassmannian. Fix some complete flag $0 \subset U_1 \subset U_2 \subset \ldots U_{d+1} = \C^{d+1}$. The Schubert cycle $L$ is given by:
\begin{equation} \label{eq:schubert}
L = \left\{ V \in \Gr(\ell,d) \mid U_l \subset V \subset U_{l+2}\right\}.
\end{equation}
The next Lemma is the equivalent of Lemma \ref{lem:sol-space} for degeneracy loci on the Grassmannian.

\begin{lem} \label{lem:hypersurface-components}
Let $W$ and $|W|$ be the degeneracy locus of 
$\xymatrix{\cO(-1)^n \ar[r]^T & \cO^n}$ and its support. Denote by $L \subset \Gr(\ell,d)$ a $1$-dimensional Schubert cycle associated to some flag. Then the kernel spaces of $T$ at each of the points in $L \cap |W|$ are linearly independent and generically the intersection of $L$ and $W$ has $n$ points counting multiplicities.
\end{lem}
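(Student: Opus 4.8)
The plan is to restrict the bundle map $T$ to the Schubert line $L$, observe that it becomes an ordinary linear pencil of $n\times n$ matrices, and then read off both statements from the elementary linear algebra of regular pencils --- exactly as Lemma~\ref{lem:sol-space} reduced the incidence condition inside $\pp^d$ to a (joint) eigenvalue problem.

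First I would make the geometry of $L$ explicit. Let $U\subset U'$ be the two flag subspaces occurring in \eqref{eq:schubert} that define $L$, with $\dim_\C U=\ell$ and $\dim_\C U'=\ell+2$, so that every $V\in L$ is $V_{[s:t]}=U+\C(sa+tb)$, where $a,b\in U'$ are fixed lifts of a basis of $U'/U\cong\C^2$ and $[s:t]\in\pp^1$ is uniquely determined by $V$. Fixing a basis $e_1,\dots,e_\ell$ of $U$, the Plücker coordinate of $V_{[s:t]}$ is the line spanned by $e_1\wedge\cdots\wedge e_\ell\wedge(sa+tb)=s\,\eta_a+t\,\eta_b$, where $\eta_a=e_1\wedge\cdots\wedge e_\ell\wedge a$ and $\eta_b=e_1\wedge\cdots\wedge e_\ell\wedge b$ are linearly independent in $\wedge^{\ell+1}\C^{d+1}$; hence the Plücker embedding identifies $L$ with the projective line through $[\eta_a]$ and $[\eta_b]$, so $\cO_{\Gr}(1)$ restricts to $\cO_{\pp^1}(1)$ on $L$. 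Since the entries of $T$ are sections of $\cO_{\Gr}(1)$, they restrict to linear forms in $(s,t)$; trivializing $\cO_{\Gr}(-1)^n$ along $L$ by the section $s\eta_a+t\eta_b$, the fiber map of $T|_L$ at $[s:t]$ becomes $sA+tB$ for fixed $A,B\in M_n(\C)$ (the values of $T$ on $\eta_a$, $\eta_b$). Thus $\det(T|_L)=\det(sA+tB)$ is a binary form of degree $n$, the point $p=[s:t]$ lies in $|W|$ iff $\det(sA+tB)=0$, and $\ker T(p)=\ker(sA+tB)\subseteq\C^n$.

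Granting this, I would finish as follows. For the counting statement, one works with a Schubert line $L$ not contained in $|W|$; this is the generic case, because $\det T$ is a nonzero section of $\cO_{\Gr}(n)$ (in the situations where the lemma is used, $T=\gamma(V)$ for a non-degenerate $\gamma$) and ``$\det(T|_L)\not\equiv0$'' is an open, nonempty condition on Schubert lines. Then $\det(sA+tB)$ is a nonzero binary form of degree $n$, hence has exactly $n$ zeros in $\pp^1$ counted with multiplicity; equivalently the $0$-cycle $L\cdot W$, cut out on $L$ by $\det(T|_L)\in\Gamma(L,\cO_L(n))$, has degree $n$. For the linear independence of the kernels, still assuming the pencil regular, choose $[s_0:t_0]$ with $C:=s_0A+t_0B$ invertible and any further point $[s_1:t_1]$, set $D:=s_1A+t_1B$ and $M:=C^{-1}D$. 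Under the induced automorphism $[s:t]\leftrightarrow[\alpha:\beta]$ of $\pp^1$ with $sA+tB=\alpha C+\beta D$, we get
\[
\ker(sA+tB)=\ker(\alpha C+\beta D)=\ker(\alpha I+\beta M),
\]
which equals the eigenspace $E_{-\alpha/\beta}(M)$ when $\beta\neq0$ and is $0$ when $\beta=0$ (the point $[s_0:t_0]$, which therefore is not in $|W|$). Hence $L\cap|W|$ is carried bijectively, by an injective Möbius map, onto the set of eigenvalues of $M$, and $\ker T$ at a point of $L\cap|W|$ is the corresponding eigenspace; since eigenspaces of a fixed operator for distinct eigenvalues are linearly independent, so are the kernel spaces of $T$ at the points of $L\cap|W|$.

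I expect the first paragraph to be the only genuinely delicate step: one must be sure that a $1$-dimensional Schubert cycle really is a line under the Plücker embedding (so that $\cO_{\Gr}(1)|_L\cong\cO_{\pp^1}(1)$ and $T|_L$ is an honest linear pencil), and one must keep track of the fact that the fibers of $\cO_{\Gr}(-1)^n$ are a priori distinct one-dimensional spaces tensored with $\C^n$ --- the choice of the global generator $s\eta_a+t\eta_b$ of $\cO_{\Gr}(-1)|_L$ is exactly what lets us regard all the kernel spaces as subspaces of one fixed $\C^n$. Once that bookkeeping is done, the rest is the standard eigenspace argument.
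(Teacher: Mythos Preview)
Your proposal is correct and follows essentially the same approach as the paper: parametrize the Schubert line explicitly via $[s:t]\in\pp^1$, observe that $T$ restricts to a linear pencil $sA+tB$, and reduce the linear-independence claim to the eigenspace decomposition of $C^{-1}D$ for an invertible value $C$ of the pencil. The only notable difference is in the degree count: the paper invokes that $[L]$ generates $A_1(\Gr(\ell,d))$ so that $\int_{\Gr}[W][L]=n$, whereas you read off the count directly from $\det(sA+tB)$ being a nonzero binary form of degree $n$; your version is more elementary and avoids the appeal to the Chow ring, at the price of requiring the explicit check that $\cO_{\Gr}(1)|_L\cong\cO_{\pp^1}(1)$, which you carry out.
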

\begin{proof}
The class of the $1$-dimensional Schubert cycle $[L]$ generates $A^{g_{\ell}-1}$, hence $\int_{\Gr(\ell,d)}[W][L] = n$, so generically it has $n$ points counting multiplicities. Now one let $v_1,\ldots,v_{\ell+2}$ be the basis of $U_{\ell+2}$ such that the first $\ell$ vectors are a basis for $U_{\ell}$, then if $U_{\ell} \subset V \subset U_{\ell+2}$ then the Pl\"{u}cker coordinates of $V$ in $\Gr(\ell,d)$ are $v_1 \wedge \ldots \wedge v_{\ell} \wedge (t v_{\ell+1} + s v_{\ell+2})$, where $[t:s] \in \pp^1$. We may assume that $T(v_1 \wedge \ldots \wedge v_{\ell+1})$ is invertible. Hence passing to the open subset where $s = 1$, we see that:
\begin{equation*}
\det(T(V)) = 0 \iff \det (t I + T(v_1 \wedge \ldots \wedge v_{\ell+1}^{-1}) T(v_1 \wedge \ldots \wedge v_{\ell} \wedge v_{\ell+2})) = 0.
\end{equation*}
The multiplicity of the intersection is the order of zero of the determinant on $L$. The kernels are clearly eigenspaces of a matrix associated to distinct eigenvalues and hence have zero intersections.
\end{proof}

Now we can give a description of \vr{} tensors both geometrically and algebraically:

\begin{thm}  \label{thm:char1}
Let $\gamma \in \wedge^{k+1} \C^{d+1} \otimes M_n(\C)$ be non-degenerate. Let $Y = p_2(p_1^{-1}(D(\gamma))$, $\alpha = p_{2*}(p_1^*(Z(\gamma)) \in Z_*(\Gr(d-k-1,d))$ and $W$ and $|W|$ the degeneracy locus of $\gamma(V)$ and its support. Then the following conditions are equivalent:
\begin{itemize}
\item[(a)] The tensor $\gamma$ is \vr{};

\item[(b)] The variety $Y$ is a hypersurface and furthermore $\alpha = W$ and $Y = |W|$;
\end{itemize}
\end{thm}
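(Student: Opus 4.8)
The plan is to establish the two implications separately, using the identification $A^1(\Gr(d-k-1,d)) \cong \Z$, generated by the class $\sigma_1$, as the main bookkeeping tool; throughout I will use that $[W] = n\sigma_1$ since $\det\gamma(V)$ is a section of $\cO_{\Gr}(n)$, and that $Y \subseteq |W|$ always, by Lemma \ref{lem:inclusion}. For $(b)\Rightarrow(a)$ I would argue by comparing classes. Decompose $Z(\gamma) = \sum_j n_j[D_j]$ according to the irreducible components of $D(\gamma)$: those $D_j$ of dimension $k$ contribute to $\alpha = p_{2*}p_1^*Z(\gamma)$ the divisorial terms $n_j[Y_j]$, where $Y_j = p_2(p_1^{-1}(D_j))$, while any $D_j$ of dimension $<k$ contributes a cycle of dimension $< g_{d-k-1}-1$, of zero class in $A^1$. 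Since $p_2$ is generically one-to-one on $p_1^{-1}(D_j)$ (the incidence-correspondence fact underlying Lemma \ref{lem:chow-degree}), one has $p_{2*}p_1^*[D_j] = \deg(D_j)\sigma_1$, so $[\alpha] = \big(\sum_{\dim D_j=k} n_j\deg D_j\big)\sigma_1 = \deg(\gamma)\,\sigma_1$ by the very definition of $\deg(\gamma)$. Hence $\alpha = W$ forces $\deg(\gamma) = n$, i.e. $\gamma$ is \vr{}.

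For $(a)\Rightarrow(b)$ I would start from $\deg(\gamma) = n$. By Proposition \ref{prop:pure_dim}, $D(\gamma) = D_1\cup\dots\cup D_r$ is pure of dimension $k$ with $\sum_j n_j\deg D_j = n$. The incidence correspondence then gives that each $Y_j = p_2(p_1^{-1}(D_j))$ is a hypersurface with $[Y_j] = \deg(D_j)\sigma_1$, that the $Y_j$ are pairwise distinct (an irreducible variety is recovered from its Chow form), and that $p_2$ is birational onto $Y_j$; thus $Y = \bigcup_j Y_j$ is a hypersurface, $\alpha = \sum_j n_j[Y_j]$, and $[\alpha] = n\sigma_1 = [W]$. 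By Lemma \ref{lem:inclusion} each $Y_j$ is a component of $|W|$, and the heart of the matter is the estimate $\operatorname{ord}_{Y_j}(\det\gamma(V)) \geq n_j$. To get it, note that a generic $V \in Y_j$ meets $D(\gamma)$ in a single point $\mu$ which, by the generic injectivity of $p_2$ and dominance of $p_1$ on $p_1^{-1}(D_j)$ together with $V\notin Y_{j'}$ for $j'\neq j$, is a generic point of $D_j$; the fiber of the kernel sheaf over $\mu$ therefore has dimension exactly $n_j$, and, exactly as in the proof of Lemma \ref{lem:inclusion}, is contained in $\ker\gamma(V)$. So $\gamma(V)$ drops rank by at least $n_j$ at the generic point of $Y_j$, and passing to the discrete valuation ring $\cO_{\Gr, Y_j}$ and using Smith normal form there shows $\det\gamma(V)$ vanishes to order $\geq n_j$ along $Y_j$. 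This yields $\alpha \leq W$ as effective divisors, so $W-\alpha$ is effective with $[W-\alpha]=0$ in $A^1(\Gr)\cong\Z$, which forces $W = \alpha$ and, in particular, $|W| = |\alpha| = Y$.

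I expect the main obstacle to be precisely this order-of-vanishing estimate: it combines the geometric input (a generic member of $Y_j$ meets $D(\gamma)$ at a single point of $D_j$ lying in the locus where the kernel sheaf attains its generic fiber dimension, and that fiber injects into $\ker\gamma(V)$) with the commutative-algebra input (a rank drop of $n_j$ along a prime divisor makes the determinant vanish to order at least $n_j$ there). Once this is in place, everything else is a formal manipulation of classes in $A^1(\Gr)\cong\Z$, together with the incidence-correspondence facts already recorded around Lemma \ref{lem:chow-degree}.
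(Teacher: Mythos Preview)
Your proposal is correct and quite close to the paper's proof, but the concluding mechanism in the direction $(a)\Rightarrow(b)$ is genuinely different and worth noting. Both you and the paper first establish the key geometric fact that for $V\in Y_j$ the fiber of the kernel sheaf at the point $\mu\in D_j$ through which $V$ passes injects into $\ker\gamma(V)$, so that $\dim\ker\gamma(V)\ge n_j$ on $Y_j$. From here the paper restricts to a generic one-dimensional Schubert cycle $L$ (Lemma~\ref{lem:hypersurface-components}): on $L$ the kernels of $\gamma(V)$ at the distinct intersection points with $|W|$ are linearly independent subspaces of $\C^n$, the points on the $Y_j$ alone already contribute $\sum_j n_j\deg Y_j=n$ dimensions, and since everything sits in $\C^n$ there is no room for further components or higher multiplicities; this pins down $|W|=Y$ and $W=\sum_j n_j Y_j$ in one stroke. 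You instead stay on the Grassmannian, convert the rank drop into $\operatorname{ord}_{Y_j}(\det\gamma(V))\ge n_j$ via Smith normal form over the DVR $\cO_{\Gr,Y_j}$, deduce $\alpha\le W$ as effective divisors, and then use $[\alpha]=[W]$ in $A^1(\Gr)\cong\Z$ to force equality. Your route is a bit more intrinsic and makes the effective-divisor inequality $\alpha\le W$ explicit; the paper's Schubert-line count is more elementary (no Smith normal form) and simultaneously yields Corollary~\ref{cor:vr_kernel_fibers} about the generic agreement of kernel fibers. For $(b)\Rightarrow(a)$ the two arguments are the same: the paper's one-line ``$\deg W=n$ and $\deg\gamma=\deg\alpha$'' is exactly your class computation $[\alpha]=\deg(\gamma)\,\sigma_1$.
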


\begin{proof}

$(a) \Rightarrow (b)$ By \cite[Ex.\ 11.18]{Har95} if $X \subset \pp^d$ is irreducible, so is $p_2(p_1^{-1}(X))$. So if $D(\gamma) = D_1 \cup \ldots \cup D_r$ is the decomposition into irreducible components and $Y_j = p_2(p_1^{-1}(D_j))$, then $Y = Y_1 \cup \ldots \cup Y_r$ is the decomposition of $Y$ into irreducible components. By Lemma \ref{lem:inclusion} $Y \subset |W|$ and by Proposition \ref{prop:pure_dim} they are of the same dimension. Hence we can conclude that the irreducible components of $Y$ are a subset of the irreducible components of $|W|$. Now $W$ is the degeneracy locus of a map of vector bundles. Take a line as in Lemma \ref{lem:hypersurface-components}; its intersection with $W$ will yield a set of linearly independent subspaces of $\C^n$. Note that for a point $\mu \in D(\gamma)$, if $(\gamma\wedge \mu) u = 0$, then $\gamma(V) u = 0$ for every $d-k-1$-plane $V$ through $\mu$. Hence $\dim \ker \gamma(V) \geq n_j$, for every $V \in Y_j$. However, $\sum_j n_j \deg Y_j = n$ and thus $|W|$ no other components and furthermore $W = \sum_{j=1}^r n_j Y_j$.

$(b) \Rightarrow (a)$ This is immediate since the degree of $W$ is $n$ and $\deg \gamma = \deg \alpha$.
\end{proof}

The following corollary is immediate from the proof.
\begin{cor} \label{cor:vr_kernel_fibers}
Assume $\gamma$ is \vr{}. Let $\cK$ be the kernel sheaf of $\gamma$ on $D(\gamma)$ and let $\cK^{\prime}$ be the kernel sheaf on $|W|$. Then the fibers of $p_{2*}p_1^{*} \cK$ and $\cK^{\prime}$ agree generically.
\end{cor}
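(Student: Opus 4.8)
The plan is to assemble, component by component, the generic-fibre statements that are already implicit in the proof of Theorem~\ref{thm:char1}. Write $D(\gamma) = D_1 \cup \cdots \cup D_r$ for the decomposition into irreducible components, all of dimension $k$ by Proposition~\ref{prop:pure_dim}, put $Y_j = p_2(p_1^{-1}(D_j))$, so that $|W| = Y = Y_1 \cup \cdots \cup Y_r$ and $W = \sum_j n_j Y_j$ by Theorem~\ref{thm:char1}. Fix $j$ and choose a dense open $Y_j^{\circ} \subset Y_j$ away from (i) the other components of $Y$; (ii) the proper closed subset $p_2(p_1^{-1}(D_j \setminus U_j))$, where $U_j \subset D_j$ is the locus on which $\cK$ has fibre dimension exactly $n_j$ (so $\dim(D_j \setminus U_j) < k$ and, $p_1$ and $p_2$ being proper, its image in $Y_j$ is proper closed); and (iii) the locus over which $p_2$ fails to restrict to an isomorphism $p_1^{-1}(D_j) \dashrightarrow Y_j$ (recall $p_2$ is birational on $p_1^{-1}(D_j)$, these two spaces being equidimensional of dimension $g_{d-k-1}-1$). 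For $V \in Y_j^{\circ}$ the scheme $V \cap D(\gamma)$ is then the single reduced point $\mu(V) \in D_j$, and cohomology and base change along the generically finite (indeed generically isomorphic) morphism $p_2|_{p_1^{-1}(D_j)}$ identifies the fibre $(p_{2*} p_1^{*}\cK)_V$ with $\cK_{\mu(V)}$, a space of dimension $n_j$.

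Next I would recall the kernel-inclusion observed in the proof of Theorem~\ref{thm:char1}: if $\mu \in D(\gamma)$ and $(\gamma \wedge \mu) u = 0$ then $\gamma(V) u = 0$ for every $d-k-1$-plane $V$ through $\mu$. Applied at $\mu = \mu(V)$ this gives a natural inclusion $\cK_{\mu(V)} \hookrightarrow \cK^{\prime}_V$ of subspaces of $\C^n$. For the reverse dimension bound I would invoke Lemma~\ref{lem:hypersurface-components}: along a generic $1$-dimensional Schubert cycle $L$ the kernel spaces of $\gamma(V)$ at the points of $L \cap W$ are linearly independent and there are $n$ such points counting multiplicity; since $\sum_j n_j \deg Y_j = n$, for a generic $V \in Y_j$ the space $\cK^{\prime}_V$ has dimension exactly $n_j$. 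Shrinking $Y_j^{\circ}$ once more, using upper semicontinuity of fibre dimension, we may assume $\dim \cK^{\prime}_V = n_j$ on all of $Y_j^{\circ}$.

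It remains only to combine these: over $Y_j^{\circ}$ the natural map $(p_{2*} p_1^{*}\cK)_V \cong \cK_{\mu(V)} \hookrightarrow \cK^{\prime}_V$ is an inclusion of vector spaces both of dimension $n_j$, hence an isomorphism. Since $\bigcup_j Y_j^{\circ}$ is dense open in $|W| = Y$, this proves the claim. The one point deserving care is the simultaneous bookkeeping of the generic hypotheses in the first paragraph --- in particular making precise that, over a suitable dense open of $Y_j$, pushing $p_1^{*}\cK$ forward along $p_2$ genuinely returns the fibre of $\cK$ at the unique preimage point; this is exactly the package of facts (birationality of $p_2$ on $p_1^{-1}(X)$, properness of $p_1$ and $p_2$, and equidimensionality from Proposition~\ref{prop:pure_dim}) already exploited throughout Section~\ref{sec:det_rep}.
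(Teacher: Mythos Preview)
Your proposal is correct and is precisely the unpacking of what the paper means by ``immediate from the proof'': you use exactly the same ingredients --- the kernel inclusion $\ker(\gamma\wedge\mu)\subset\ker\gamma(V)$ for $V\ni\mu$, the birationality of $p_2$ on $p_1^{-1}(D_j)$, and the degree count $\sum_j n_j\deg Y_j=n$ together with Lemma~\ref{lem:hypersurface-components} to force equality of dimensions --- that appear in the proof of Theorem~\ref{thm:char1}. The paper gives no further argument, so there is nothing to compare beyond noting that you have made the generic bookkeeping explicit.
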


\begin{cor} \label{cor:vr_necessary}
Assume that $\gamma$ is \vr{}. Let $V$ a $d-k-1$ plane that does not intersect $D(\gamma)$ and let $U$ be a $d-k$-plane through $V$ that intersects $D(\gamma)$ transversely. Then for every $u\in U$ linearly independent from $V$ the matrices $A_j = \gamma(V)^{-1} \gamma(V,u,j)$ for $j=0,\ldots,d-k-1$, commute and are semi-simple. 
\end{cor}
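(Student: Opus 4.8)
The plan is to show that $\C^n$ has a basis consisting of common eigenvectors of all the matrices $A_0,\dots,A_{d-k-1}$; simultaneous diagonalizability then yields both commutativity and semisimplicity at once.

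First I would check that the $A_j$ are well defined. Since $\gamma$ is \vr{} and $V\cap D(\gamma)=\emptyset$, Theorem~\ref{thm:char1}(b) identifies $|W|$, the vanishing locus of $\det\gamma(V)$, with $Y=p_2(p_1^{-1}(D(\gamma)))$; as $V\notin Y$ we get $V\notin|W|$, so $\gamma(V)$ is invertible and $A_j=\gamma(V)^{-1}\gamma(V,u,j)$ makes sense. Next, write $D(\gamma)=D_1\cup\dots\cup D_r$ for the decomposition into irreducible components, each of dimension $k$ by Proposition~\ref{prop:pure_dim}. Transversality of $U\cap D(\gamma)$ forces every intersection point to be a smooth point of $D(\gamma)$, hence to lie on a single $D_j$, and with intersection multiplicity one; so $U$ meets each $D_j$ in exactly $\deg D_j$ distinct points, and listing all intersection points as $\mu_1,\dots,\mu_m$ we have $m=\sum_j\deg D_j$. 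Because $V\cap D(\gamma)=\emptyset$, each $\mu_i$ lies in the affine chart of $U$ cut out by $V$, say $\mu_i=u+\sum_{j=0}^{d-k-1}t_j(\mu_i)v_j$, and the tuples $\bigl(t_0(\mu_i),\dots,t_{d-k-1}(\mu_i)\bigr)$ are pairwise distinct.

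By Lemma~\ref{lem:sol-space}, the fiber $K_i$ of the kernel sheaf $\cK$ at $\mu_i$ is a nonzero subspace of the common eigenspace $E_i=\{w\in\C^n : A_jw=t_j(\mu_i)\,w \text{ for all } j\}$, and the $K_i$ are linearly independent in $\C^n$. Since the joint eigenvalue tuples are distinct, the $E_i$ are linearly independent as well, so $\sum_i\dim K_i\le\sum_i\dim E_i\le n$. On the other hand, $U$ meets each $D_j$ in $\deg D_j$ distinct points, so by upper semicontinuity of the fiber dimension of the coherent sheaf $\cK$ one gets $\sum_i\dim K_i\ge\sum_j n_j\deg D_j=\deg\gamma=n$, the last equality because $\gamma$ is \vr{}. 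Hence every inequality is an equality: $\C^n=\bigoplus_i E_i$ and $E_i=K_i$. In a basis of $\C^n$ adapted to $\C^n=\bigoplus_i E_i$, each $A_j$ acts on $E_i$ as the scalar $t_j(\mu_i)$, so all the $A_j$ are simultaneously diagonal; therefore they pairwise commute and are semisimple.

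The only delicate point is the chain of (in)equalities: one must be sure that transversality really does make $U$ meet each $D_j$ in $\deg D_j$ reduced points, so that the upper-semicontinuity estimate $\sum_i\dim K_i\ge\deg\gamma$ is applicable. Once that is in place the argument is just bookkeeping around Lemma~\ref{lem:sol-space} and the definition of \vr{}.
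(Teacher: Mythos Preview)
Your proposal is correct and follows essentially the same route as the paper: invoke Theorem~\ref{thm:char1} to get $\gamma(V)$ invertible, use Lemma~\ref{lem:sol-space} to place the kernel fibers at the intersection points inside joint eigenspaces of the $A_j$, and then argue that these fibers fill up $\C^n$ so that the $A_j$ are simultaneously diagonalizable. The paper compresses the spanning step into a single appeal to Theorem~\ref{thm:char1}, whereas you spell it out via the inequality chain $\sum_i\dim K_i\ge\sum_j n_j\deg D_j=\deg\gamma=n$ coming from upper semicontinuity and transversality; this is exactly the content the paper is gesturing at, so your extra detail is a welcome elaboration rather than a different argument.
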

\begin{proof}
By Theorem \ref{thm:char1} we have that $\gamma(V)$ is invertible. Furthermore, by Lemma \ref{lem:sol-space} we know that for each $\mu \in U \cap D(\gamma)$, the fiber of the kernel sheaf $E_{\mu}$ is a subset of a joint eigenspace of the $A_j$. Again by Theorem \ref{thm:char1} we know that the $E_{\mu}$ span $\C$. We conclude that the $A_j$ commute and are semi-simple.
\end{proof}

To get a sufficient condition we will consider a non-degenerate tensor $\gamma$ and a $d-k-1$-plane $V$, such that $\gamma(V)$ is invertible. Let us assume that $V$ is spanned by $e_{k+1},\ldots,e_d$ and complete it to a basis of $\C^{d+1}$. Recall that a point $\mu = \sum_{j=0}^d z_j e_j \in D(\gamma)$ if there exists a non-zero vector $w \in \C^n$, such that for every $J \subset \{0,\ldots,d\}$ of cardinality $k+2$ we have:
\[
\sum_{j \in J} (-1)^{\sigma(J,j)} z_j \gamma_{J \setminus \{j\}} w = 0.
\]
Note that $\gamma(V)$ is precisely $\gamma_{I_0}$, where $I_0 = \{0,\ldots,k\}$. Hence we get the following equation for every $\ell = k+1,\ldots,d$:
\[
z_{\ell} w = \sum_{j=0}^k (-1)^{\sigma_j} z_j \gamma_{I_0}^{-1} \gamma_{I_0 \setminus \{j\} \cup \{\ell\}} w.
\]
In fact if $\gamma$ is \vr{} this is another way to obtain the result of Corollary \ref{cor:vr_necessary}. Now let $I \subset \{0,\ldots,d\}$ of cardinality $k+1$, such that $|I \cap I_0| \leq k-1$ and let $p \in I_0 \setminus (I \cap I_0)$. Then we can take $J = I \cup \{p\}$ and get the equation:
\[
\sum_{j \in J \cap I_0} (-1)^{\sigma(J,j)} z_j \gamma_{J \setminus \{j\}} w + \sum_{j \in J \setminus (J \cap I_0)} \sum_{\ell = 0}^k (-1)^{\sigma(J,j) + \sigma_{\ell}} z_{\ell} \gamma_{J \setminus \{j\}} \gamma_{I_0}^{-1} \gamma_{I_0 \setminus \{\ell\} \cup \{j\}} w = 0.
\]

The coefficient of $z_p$ is:
\[
\left( \gamma_I + \sum_{j \in J \setminus (J \cap I_0)} (-1)^{\sigma(J,j) + \sigma_{\ell} + \sigma(J,p)} \gamma_{J \setminus \{j\}} \gamma_{I_0}^{-1} \gamma_{I_0 \setminus \{p\} \cup \{j\}} \right) w.
\]
Note that for every $j$ in the sum above we have that $|(J \setminus \{j\}) \cap I_0| = |I \cap I_0| + 1$. So we can express them as well using the same formula. Furthermore, if $\gamma$ is \vr{}, then the variables $z_0,\ldots,z_k$ are free and for every choice of those variables we have a basis for $\C^n$	formed by the joint eigenvectors of the corresponding pencils. Hence, if we take $z_p$ non-zero and others $0$, we'll get that:
\[
\gamma_I = \sum_{j \in J \setminus (J \cap I_0)} (-1)^{\sigma(J,j) + \sigma_{\ell} + \sigma(J,p)} \gamma_{J \setminus \{j\}} \gamma_{I_0}^{-1} \gamma_{I_0 \setminus \{p\} \cup \{j\}}.
\]
It is not difficult to check using induction and the commutation conditions described in Corollary \ref{cor:vr_necessary} that in fact this formula is independent of the choice of $p$. On the other hand it is immediate that if the commutation conditions hold, the matrices described in Corollary \ref{cor:vr_necessary} are semi-simple and the above equations are satisfied, then $\gamma$ is \vr{}.

\section{Hyperbolicity and the Grassmannian} \label{sec:hyper}

Recall that in the classical case a real hypersurface $X \subset \pp^d$ is called hyperbolic with respect to a real point $a \in \pp^d$ if for every real line $L$ that passes through $a$, we have that $X \cap L \subset X(\R)$. 

We will generalize this definition to the case when $\codim X > 1$ as follows:

\begin{definition}
Let $X \subset \pp^d$ be a real subvariety of codimension $\ell$. We'll say that $X$ is hyperbolic with respect to a real linear $\ell -1$-dimensional subspace $V \subset \pp^d$, if $V \cap X = \emptyset$ and for every $\ell$-dimensional subspace, $U$, that contains $V$ we have that $X \cap U \subset X(\R)$.
\end{definition}

\begin{prop} \label{prop:dividing_general}
Assume $X \subset \pp^d$ is a real subvariety of dimension $k$ and $V$ a real $d-k-1$-plane, that does not intersect $X$. Then $X$ is hyperbolic with respect to $V$ if and only if the projection $f$ from $V$ onto $V^{\perp} \cong \pp^k$ restricted to $X$ has the following property: 

$(\star)$ $f(x) \in \pp^k(\R)$ if and only if $x \in X(\R)$. 

\end{prop}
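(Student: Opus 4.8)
The plan is to unwind both sides of the stated equivalence into the same geometric statement about the projection $f \colon X \to V^{\perp} \cong \pp^k$. The key observation is that the fibers of $f$ are exactly the intersections $X \cap U$, where $U$ runs over the $\ell$-dimensional ($= (d-k)$-dimensional) linear subspaces containing $V$: indeed, given a point $y \in V^{\perp}$, the span $U = \langle V, y\rangle$ is the unique such subspace with $f^{-1}(y) = X \cap U$ (this is well-defined precisely because $V \cap X = \emptyset$, so no point of $X$ lies on $V$ itself and the projection from $V$ is a morphism on all of $X$). Moreover $\langle V, y\rangle$ is a real subspace if and only if $y$ is a real point of $V^{\perp}$, since $V$ is real; and conversely every real $U \supset V$ arises as $\langle V, y\rangle$ for a unique real $y = U \cap V^{\perp}$.

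First I would record the reduction: $X$ is hyperbolic with respect to $V$ means that for every real $U \supset V$ of dimension $d-k$, every point of $X \cap U$ is real. Translating via the fiber description, this says: for every real $y \in V^{\perp}(\R)$, every point of $f^{-1}(y)$ lies in $X(\R)$; equivalently, $f(x)$ real $\Rightarrow$ $x$ real, which is the ``if'' direction of $(\star)$. So the content of the proposition is that this one-directional implication is automatically equivalent to the full biconditional $(\star)$, i.e.\ that the reverse implication $x \in X(\R) \Rightarrow f(x) \in \pp^k(\R)$ holds for free. But that is clear: $f$ is a projection defined over $\R$ (both $V$ and $V^{\perp}$ are real), so $f$ maps real points to real points — $f(X(\R)) \subset \pp^k(\R)$ — with no hyperbolicity hypothesis needed at all. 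Hence $(\star)$ holds iff its nontrivial half holds iff $X$ is hyperbolic with respect to $V$.

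The one point deserving a little care is the well-definedness of $f$ on all of $X$ and the bijection between real $(d-k)$-planes $U \supset V$ and real points of $V^{\perp}$: both rest on the hypothesis $V \cap X = \emptyset$ (which is part of the definition of hyperbolicity and is also assumed in the proposition) and on the fact, noted in the introduction of the paper, that the orthogonal complement of a real subspace is again real. I would also remark that $\dim V = d-k-1$ forces $\dim V^{\perp} = k$, so $V^{\perp} \cong \pp^k$ as claimed, and that $U \mapsto U \cap V^{\perp}$ and $y \mapsto \langle V, y\rangle$ are mutually inverse. There is essentially no obstacle here; the only thing to be vigilant about is not to conflate ``$U$ meets $X$'' with ``$U \cap X$ is a single point'' — we do not need the intersection to be zero-dimensional or reduced, we only need the set-theoretic containment $X \cap U \subset X(\R)$, and the fiber identification $f^{-1}(y) = X \cap U$ holds at the level of sets regardless.
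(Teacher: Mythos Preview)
Your proposal is correct and follows essentially the same route as the paper: both arguments identify the fibers $f^{-1}(y)$ with the intersections $X \cap \langle V, y\rangle$, observe that the implication $x \in X(\R) \Rightarrow f(x) \in \pp^k(\R)$ is automatic because the projection is defined over $\R$, and conclude that the remaining implication in $(\star)$ is exactly the hyperbolicity condition. Your write-up is slightly more explicit in isolating which half of $(\star)$ is contentful, but the substance is identical.
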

\begin{proof}
Let $V^{\perp}$ be the real $k$-plane associated to the orthogonal complement of $V$. It is immediate that every $d-k$-plane through $V$ intersects $V^{\perp}$ at a single point. Furthermore, if the $d-k$-plane is real then so is its point of intersection with $V^{\perp}$. Consider now the projection of $X$ onto $V^{\perp}$ from $V$, namely for each point $x \in X$, we consider the $d-k$-plane $U_x$ spanned by $x$ and $V$ and map $x$ to the point of intersection of $U_x$ and $V^{\perp}$.  Clearly, if $x \in X(\R)$, then $f(x) \in \pp^k(\R)$, since $U_x$ is real in that case. Next note that for every point $y \in V^{\perp}$ the fiber over $y$ is precisely the points of intersection of the $d-k$-plane $U_y$ spanned by $V$ and $y$ with $X$, hence the map $f$ has property $(\star)$ if and only if $X$ is hyperbolic with respect to $V$.
\end{proof}
Another way to connect the notion of hyperbolicity introduced here and the classical one is similar to the above construction. 

\begin{prop} \label{prop:projection}
Assume $X \subset \pp^d$ is a real subvariety of dimension $k$, and $V$ a real $d-k-1$-plane, that does not intersect $X$. Take $V_0 \subset V$ of codimension $m$ in $V$ and project $\pp^d$ onto $V_0^{\perp} \cong \pp^{k+m}$ as above. Denote the projection by $\pi_{V_0}$. note that $\pi_{V_0}(V)$ is an $m-1$-plane and $\pi_{V_0}(X)$ is subvariety of codimension $m$. Then $X$ is hyperbolic with respect to $V$ if and only if $\pi_{V_0}(X)$ is hyperbolic with respect to $\pi_{V_0}(V)$ for every $V_0 \subset V$ of codimension $m$.
\end{prop}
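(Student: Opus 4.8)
The plan is to reduce the statement, by means of Proposition~\ref{prop:dividing_general}, to the assertion that two naturally associated linear projections have property $(\star)$, and finally to an elementary lemma about the image of a single point under projection from the subplanes of $V$. Throughout, $V_0\subseteq V$ is a real subplane of codimension $m$ (reality of $V_0$ is forced for $\pi_{V_0}(X)$ and $\pi_{V_0}(V)$ to be a real subvariety and a real subspace), and I write $\hat V_0\subseteq\hat V$ for the linear subspaces of $\C^{d+1}$ over $V_0\subseteq V$. Fixing the orthogonal decomposition $\C^{d+1}=\hat V_0\oplus\hat V_1\oplus\hat V^{\perp}$ with $\hat V=\hat V_0\oplus\hat V_1$ and $\dim\hat V_1=m$: since $X\cap V_0\subseteq X\cap V=\emptyset$, the restriction $\pi_{V_0}|_X$ is a finite morphism, so $\pi_{V_0}(X)$ is a real subvariety of $V_0^{\perp}\cong\pp^{k+m}$ of dimension $k$, hence of codimension $m$; moreover $\pi_{V_0}$ carries $V$ onto the $(m-1)$-plane $\pi_{V_0}(V)=\pp(\hat V_1)$, and, since the preimage of $\pi_{V_0}(V)$ under $\pi_{V_0}$ is contained in $V$, we get $\pi_{V_0}(X)\cap\pi_{V_0}(V)=\emptyset$.

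The key computation in the setup is that the orthogonal complement of $\hat V_1$ inside $\hat V_0^{\perp}$ equals $(\hat V_0\oplus\hat V_1)^{\perp}\cap\hat V_0^{\perp}=\hat V^{\perp}$, so the projection $g\colon V_0^{\perp}\dashrightarrow V^{\perp}\cong\pp^k$ from $\pi_{V_0}(V)$ (taken, as in Proposition~\ref{prop:dividing_general}, onto the orthogonal complement) is simply the orthogonal projection onto $\hat V^{\perp}$ restricted to $V_0^{\perp}$. Combining this with $P_{\hat V^{\perp}}\circ P_{\hat V_0^{\perp}}=P_{\hat V^{\perp}}$ (valid because $\hat V^{\perp}\subseteq\hat V_0^{\perp}$), I obtain the factorization $f=g\circ\pi_{V_0}|_X$, where $f\colon X\to V^{\perp}$ is the projection from $V$. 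I would also record that $\pi_{V_0}$ induces a bijection between the $d-k$-planes of $\pp^d$ containing $V$ and the $m$-planes of $V_0^{\perp}$ containing $\pi_{V_0}(V)$; this is not strictly needed once one uses Proposition~\ref{prop:dividing_general}, but it clarifies why the two hyperbolicity conditions correspond.

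With these preliminaries, both implications become formal. By Proposition~\ref{prop:dividing_general}, $X$ is hyperbolic with respect to $V$ exactly when $f$ has property $(\star)$, and $\pi_{V_0}(X)$ is hyperbolic with respect to $\pi_{V_0}(V)$ exactly when the corresponding $g$ has property $(\star)$ (the disjointness needed to apply the proposition was checked above). For ``$\Rightarrow$'': if $f$ has $(\star)$ then, for each $V_0$ and each $z=\pi_{V_0}(x)\in\pi_{V_0}(X)$, realness of $z$ gives realness of $g(z)$ since $g$ is a real linear projection, while realness of $g(z)=f(x)$ gives $x\in X(\R)$ by $(\star)$ for $f$, hence $z=\pi_{V_0}(x)$ is real; so $g$ has $(\star)$, for every $V_0$. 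For ``$\Leftarrow$'': if all the maps $g$ have $(\star)$, then for $x\in X$ with $f(x)\in\pp^k(\R)$ the identity $g(\pi_{V_0}(x))=f(x)$ forces $\pi_{V_0}(x)\in\pi_{V_0}(X)(\R)$, i.e.\ $\pi_{V_0}(x)$ is a real point of $\pp^{k+m}$, for every real codimension-$m$ subplane $V_0\subseteq V$. Hence everything reduces to the following lemma: \emph{if $x\in\pp^d\setminus V$ and $\pi_{V_0}(x)$ is a real point for every real codimension-$m$ subplane $V_0\subseteq V$ (with $m\ge1$), then $x\in\pp^d(\R)$.}

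This lemma is where the real work lies, and I expect it to be the main obstacle. To prove it I would suppose $x$ is not real, so that $\{x,\bar x\}$ is linearly independent and $W=\Span(x,\bar x)$ is a real $2$-dimensional subspace of $\C^{d+1}$ with $W\not\subseteq\hat V$. If $W\cap\hat V=0$, then for every $\hat V_0\subseteq\hat V$ the orthogonal projection onto $\hat V_0^{\perp}$ is injective on $W$, so $\pi_{V_0}(x)$ is never real, contradicting the hypothesis. Otherwise $W\cap\hat V$ is a line spanned by some $v\ne0$; writing $\bar x=\beta x+\gamma v$ and comparing with its complex conjugate in the basis $\{x,v\}$ of $W$ yields $|\beta|=1$, and $\gamma\ne0$ (else $\bar x=\beta x$ with $|\beta|=1$, making $x$ real), so after rescaling $v=\bar x-\beta x$. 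Then for a codimension-$m$ subspace $\hat V_0\subseteq\hat V$, writing $P$ for orthogonal projection onto $\hat V_0^{\perp}$, we have $P(x)\ne0$ and $P(\bar x)=\beta P(x)+P(v)$, so realness of $\pi_{V_0}(x)$ forces $P(v)\in\C\,P(x)$ and, since $x\notin\hat V$, the coefficient must vanish, i.e.\ $v\in\hat V_0$. Letting $V_0$ range over all real codimension-$m$ subplanes of $V$ and using $\bigcap\{\hat V_0\subseteq\hat V:\codim_{\hat V}\hat V_0=m\}=0$ for $m\ge1$, we get $v=0$, a contradiction; hence $x$ is real. Finally I would assemble: by the lemma $x\in X(\R)$, which together with the trivial implication $x\in X(\R)\Rightarrow f(x)\in\pp^k(\R)$ shows $f$ has $(\star)$, so by Proposition~\ref{prop:dividing_general} (and the standing hypothesis $X\cap V=\emptyset$) $X$ is hyperbolic with respect to $V$, completing the proof.
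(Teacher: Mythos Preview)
Your proof is correct. The paper itself provides essentially no argument --- it simply writes ``The proof is the same as above'', referring to Proposition~\ref{prop:dividing_general} --- so your detailed treatment is considerably more complete than what appears there.

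Your route via property $(\star)$ and the factorization $f = g\circ\pi_{V_0}|_X$ is natural and works cleanly. The only place where real content enters is your lemma for the ``$\Leftarrow$'' direction, and your case analysis there is correct (in Case~2 the key point is that $P(v)\in\hat V_1$ while $P(x)$ has nonzero $\hat V^{\perp}$-component, which is exactly what forces the coefficient $\lambda-\beta$ to vanish). There is a slightly more geometric way to phrase the same idea, closer in spirit to the proof of Proposition~\ref{prop:dividing_general} and perhaps what the authors had in mind: if $x\in X\cap U$ is non-real for some real $(d-k)$-plane $U\supset V$, then the real projective line $\ell$ through $x$ and $\bar x$ lies in $U$ and meets the hyperplane $V\subset U$ in a single real point $p$ (it cannot lie inside $V$ since $X\cap V=\emptyset$); choosing any real $V_0\subset V$ of codimension $m\ge 1$ with $p\notin V_0$ then makes $\pi_{V_0}(x)\neq\pi_{V_0}(\bar x)$, so $\pi_{V_0}(x)$ is non-real, contradicting the hypothesis. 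This is exactly your Case~2, with your vector $v$ playing the role of $p$ and your statement $\bigcap_{V_0}\hat V_0=0$ replaced by the equivalent observation that some real $V_0$ avoids $p$. Either formulation is fine.
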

\begin{proof}
The proof is the same as above.
\end{proof}

\begin{cor} \label{cor:classic}
Let $X, V \subset \pp^d$ be as in Proposition \ref{prop:projection} and let $V_0$ be of codimension $1$ in $V$. Denote by $\pi$ the projection onto $V_0^{\perp}$, then $\pi(X)$ is a real hypersurface hyperbolic with respect to the point $\pi(V)$.
\end{cor}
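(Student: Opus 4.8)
The plan is to read off Corollary~\ref{cor:classic} as the $m=1$ instance of Proposition~\ref{prop:projection}, after checking that in codimension~$1$ the notion of hyperbolicity from Section~\ref{sec:hyper} is literally the classical one recalled at the beginning of the section.

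First I would fix the numerology. Here $X$ has dimension $k$, so $V$ is a real $(d-k-1)$-plane with $V\cap X=\emptyset$, and $V_0\subset V$ of codimension $1$ is a real $(d-k-2)$-plane; thus we are exactly in the situation of Proposition~\ref{prop:projection} with $m=1$. Consequently $V_0^{\perp}\cong\pp^{k+1}$, the image $\pi(V)=\pi_{V_0}(V)$ is a $0$-plane, i.e.\ a single point, and $\pi(X)=\pi_{V_0}(X)$ is a subvariety of $\pp^{k+1}$ of codimension $1$. Since $V_0\subset V$ and $V\cap X=\emptyset$ we have $V_0\cap X=\emptyset$, so $\pi$ is finite on $X$ and $\dim\pi(X)=k$; hence $\pi(X)$ is genuinely a hypersurface, and it is real because $V_0$ is a real linear subspace and $X$ is defined over $\R$.

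Next I would invoke Proposition~\ref{prop:projection}: since $X$ is hyperbolic with respect to $V$, that proposition gives in particular that $\pi(X)$ is hyperbolic with respect to $\pi(V)$ in the sense of the Definition preceding Proposition~\ref{prop:dividing_general}, applied with $\ell=1$. It then remains to match this with the classical notion: a real $1$-dimensional linear subspace $U\subset\pp^{k+1}$ containing the point $\pi(V)$ is precisely a real line through $\pi(V)$, and the requirement $\pi(X)\cap U\subset\pi(X)(\R)$ for all such $U$, together with $\pi(V)\cap\pi(X)=\emptyset$, is exactly the classical statement that the hypersurface $\pi(X)$ is hyperbolic with respect to the point $\pi(V)$.

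I do not expect any genuine obstacle here; the corollary is essentially bookkeeping. The one step worth a word of justification is $\pi(V)\notin\pi(X)$: a point of $X$ projecting onto $\pi(V)$ would lie on the linear span of $V_0$ and $\pi(V)$, which has projective dimension at most $d-k-1$ and contains $V_0$ together with a point of $V$, hence coincides with $V$, contradicting $V\cap X=\emptyset$. Since this fact is already built into the proof of Proposition~\ref{prop:projection} (which in turn reduces to Proposition~\ref{prop:dividing_general}), it can simply be cited rather than reproved.
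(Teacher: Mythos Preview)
Your proposal is correct and matches the paper's approach: the paper gives no proof at all for this corollary, treating it as immediate from Proposition~\ref{prop:projection}, and your argument is exactly the $m=1$ instance of that proposition with the dimension bookkeeping and the identification with the classical notion of hyperbolicity written out explicitly. The extra verification that $\pi(V)\notin\pi(X)$ is a helpful sanity check but, as you note, already implicit in the cited propositions.
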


Let us from now on write $X$ for a real subvariety of $\pp^d$ of pure dimension $k$. Set $\ell = d - k$ and let $Y$ be as in the previous section the real hypersurface in $\Gr(\ell -1,d)$ that corresponds to $X$ via the incidence correspondence. 

The following proposition is immediate from the definitions:
\begin{prop} \label{prop:hyperbolic-equiv}
The subvariety $X$ is hyperbolic with respect to $V$ if and only if for every $1$-dimensional real Schubert cycle (as defined in \eqref{eq:schubert}), $L$, through $V$ in $\Gr(\ell - 1,d)$ we have that $Y \cap L \subset Y(\R)$. In this case we will say that $Y$ is hyperbolic with respect to $V$.
\end{prop}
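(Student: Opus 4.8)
The plan is to rewrite both conditions as statements about linear subspaces of $\pp^d$ and compare them directly, via the elementary geometry of $1$-dimensional Schubert cycles through $V$. Throughout I use that $V\cap X=\emptyset$: this is part of the definition of $X$ being hyperbolic with respect to $V$, so it is free in one direction, and in the other direction I take it as an extra standing hypothesis (exactly as in Proposition \ref{prop:dividing_general}). The first step is to record the structure of a $1$-dimensional Schubert cycle $L$ through $V$: it is determined by a choice of hyperplane $H\subset V$ together with an $\ell$-dimensional linear subspace $U\supseteq V$, namely $L=\{W\in\Gr(\ell-1,d)\mid H\subseteq W\subseteq U\}$, and it is real precisely when $H$ and $U$ are. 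One checks that $\bigcup_{W\in L}W=U$, that every point $x\in U\setminus H$ lies on the unique member $\Span(H,x)$ of $L$, and — conversely — that every real $\ell$-dimensional $U\supseteq V$ arises this way from some real $L$ through $V$ (take for $H$ any real hyperplane of $V$). Feeding this into the description of $Y$ as the locus of $(\ell-1)$-planes that meet $X$, and using $V\cap X=\emptyset$ (so a point of $X$ on a member of $L$ lies in $U\setminus V\subseteq U\setminus H$), yields, for every real $L$ with $\bigcup_{W\in L}W=U$, the identity
\[
Y\cap L=\bigl\{\,\Span(H,x)\ \big|\ x\in X\cap U\,\bigr\}.
\]

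The two implications then fall out. If $X$ is hyperbolic with respect to $V$ and $L$ is a real $1$-dimensional Schubert cycle through $V$, with base $H$ and top $U:=\bigcup_{W\in L}W$, then every $W\in Y\cap L$ is $\Span(H,x)$ with $x\in X\cap U$; applying hyperbolicity to the real $\ell$-dimensional $U\supseteq V$ gives $x\in X(\R)$, and since $H$ is real $W$ is spanned by real data, so $W\in Y(\R)$. For the converse, assume $Y\cap L\subseteq Y(\R)$ for all such $L$, fix a real $\ell$-dimensional $U\supseteq V$ and $x\in X\cap U$, and suppose $x\notin X(\R)$. As $V\cap X=\emptyset$ we have $x\notin V$, hence $U=\Span(V,x)$ because $\dim U=\dim V+1$; thus $\overline{x}\in U$ and there is a unique scalar $\beta$ with $v:=\overline{x}-\beta x\in V$, and $v\neq0$ (otherwise $\overline{x}=\beta x$ and $[x]$ would be real). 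Pick a real hyperplane $H\subset V$ with $v\notin H$ (possible since $v\neq0$ and the common intersection of the real hyperplanes of $V$ is $0$), let $L$ be the real Schubert cycle with base $H$ and top $U$, and set $W:=\Span(H,x)\in L$. Then $x\in W\cap X$, so $W\in Y\cap L$; but $\overline{x}\notin W$ (if $\overline{x}=\alpha x+h$ with $h\in H$, then uniqueness of $\beta$ forces $h=v\in H$, a contradiction), so $W\neq\overline{W}$ and $W\notin Y(\R)$ — contradicting the hypothesis. Hence $X\cap U\subseteq X(\R)$, which together with $V\cap X=\emptyset$ is the hyperbolicity of $X$ with respect to $V$.

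The statement really is close to formal; the one observation that does some work is $U=\Span(V,x)$, forced by dimension, which makes $\overline{x}$ available inside $U$ and lets $H$ be chosen to separate $x$ from $\overline{x}$ — without it the Schubert condition only yields that the member $W$ is real, not that the point $x$ is. The point one must be careful about is that $V\cap X=\emptyset$ has to be assumed and cannot be deduced from the Schubert condition alone: a real $V$ lying on $X$ lies in $Y(\R)$ and is therefore invisible to it (for example, for a smooth real conic $X\subset\pp^2$ and a real point $V\in X$ every real line through $V$ meets $X$ in real points only, yet $X$ is not hyperbolic with respect to $V$).
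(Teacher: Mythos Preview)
Your proof is correct and follows essentially the same route as the paper's: identify a real $1$-dimensional Schubert cycle through $V$ with a pair $(H,U)$ (the paper writes $(V_0,U)$) and observe that $Y\cap L=\{\Span(H,x):x\in X\cap U\}$. Your converse direction is in fact more careful than the paper's --- you explicitly construct a real hyperplane $H$ with $v=\bar{x}-\beta x\notin H$ so that $\Span(H,x)\neq\overline{\Span(H,x)}$, and you correctly flag that $V\cap X=\emptyset$ must be taken as a standing hypothesis --- whereas the paper simply asserts the final equivalence without isolating this step.
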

\begin{proof}
To see this note that every real $1$-dimensional Schubert cycle through $V$ is defined by a real subspace $V_0 \subset V$ of codimension $1$ and a real $d-k$-plane $U$ containing $V$. The points of intersection of $L$ with $Y$ are precisely the $d-k-1$-planes $V^{\prime}$ such that $V_0 \subset V^{\prime} \subset U$ and $V^{\prime} \cap X \neq \emptyset$. Since $V_0$ does not intersect $X$, then $V^{\prime}$ is spanned by the intersection of $U$ with $X$ and $V_0$. Hence the intersections are all real if and only if all of the $V^{\prime}$ are.
\end{proof}

Consider $\Gr(\ell-1,d) \subset \pp^N$ with Pl\"{u}cker embedding. Denote by $\Gr^{\dagger} \subset \R^{N+1}$, one of the connected components of the cone over $\Gr(\ell-1,d)(\R)$. For every $\ell-2$-plane $V_0 \subset \pp^d$ we can define a subset of $\Gr(\ell-1,d)$:
\[
P_{V_0} = \left\{ V^{\prime} \in \Gr(\ell-1,d)(\R) \mid V_0 \subset V^{\prime} \right\}.
\]
Note that $P_{V_0} \cong \pp^{d - \ell -2}(\R)$ and each point $V^{\prime} \in P_{V_0}$ can be identified uniquely with a point on the projection from $V_0$ onto $V_0^{\perp}$.

In fact $P_{V_0}$ is a Schubert cycle of the form $\Omega(U_0,\ldots,U_{d-k-1})$, where we fix a basis $v_0,\ldots,v_{d-k-2}$ for $V_0$ and set $U_j = \operatorname{Span}\{v_0,\ldots,v_j\}$, for $j=0,\ldots,d-k-2$ and $U_{d-k-1}= \pp^d$. This means that its cycle class is $(0,1,\ldots,d-k-2,d)$ and the dual cohomology class is $(k+1,\ldots,k+1,0)$.

\begin{definition} \label{def:sliceconvex}
Let $E \subset \Gr(\ell-1,d)(\R)$. If for a point $V \in E$ and every $V_0 \subset V$ of codimension $1$, we have that the piece of the cone over $E \cap P_{V_0}$ in $\Gr^{\dagger}$ is convex, then we will say that $E$ is slice-convex with respect to $V$. If $E$ is slice-convex with respect to every $V \ \in E$, then we will simply say that $E$ is slice convex.
\end{definition}

\begin{rem}
Consider $E \cap P_{V_0}$ as a subset of $\pp^{d-\ell-2}$ and look at the cone over it in $\R^{d-\ell-1}$. This cone is a union of a pointed convex cone and its negative if and only if the condition of Definition \ref{def:sliceconvex} holds.
\end{rem}

Let $X \subset \pp^d$ be a subvariety of codimension $\ell$ and let $Y \subset \Gr(\ell-1,d)$ be its associated hypersurface. Fix an $\ell-2$-plane $V_0 \subset \pp^d$ that does not intersect $X$ and denote by $\pi_{V_0}$ the projection from $V_0$ onto $V^{\perp}$. Note that every point $V \in P_{V_0} \cap Y$ is an $\ell-1$-plane that intersects $X$ and is spanned by $V_0$ and one of the points in the intersection. On the other hand $\pi_{V_0}(V)$ is the point on $V^{\perp}$ corresponding to $V \cap V_0^{\perp}$. Since $V$ intersects $X$ we have that $\pi_{V_0}(V) \in \pi_{V_0}(X)$. The converse is also true by the definition of the projection. Thus we can identify $P_{V_0} \cap Y$ with $\pi_{V_0}(X)$. Note that this discussion ties together Propositions \ref{prop:dividing_general} and \ref{prop:hyperbolic-equiv}.

\begin{lem} \label{lem:sliceconvex_V}
Let $X \subset \pp^d$ be a real variety of dimension $k$ hyperbolic with respect to some real $d-k-1$-plane $V$. Let $Y \subset \Gr(d-k-1,d)$ be the associated hypersurface and let $C(V)$ be the connected component of $V$ in $\Gr(\ell-1,d) \setminus Y$. Then $C(V)$ is slice-convex with respect to $V$ and furthermore every $X$ is hyperbolic with respect to every $V^{\prime} \in C(V) \cap P_{V_0}$ for every $V_0 \subset V$ of codimension $1$.
\end{lem}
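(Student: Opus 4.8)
The plan is to fix a real codimension‑one subspace $V_0 \subset V$, project everything along the linear projection $\pi_{V_0}$ centred at $V_0$, and thereby reduce the lemma to the classical G\"arding theorem for hyperbolic hypersurfaces in a projective space. Since $V_0 \subset V$ and $V \cap X = \emptyset$, also $V_0 \cap X = \emptyset$, so $\pi_{V_0} \colon \pp^d \dashrightarrow V_0^{\perp} \cong \pp^{k+1}$ is regular on $X$; put $Z := \pi_{V_0}(X) \subset \pp^{k+1}$ and $a := \pi_{V_0}(V) \in \pp^{k+1}(\R)$. By the discussion preceding the lemma the Pl\"ucker embedding identifies $P_{V_0}$ with a projective‑linear subspace of the Pl\"ucker $\pp^N$, the identification being exactly $V' \mapsto \pi_{V_0}(V')$; under it $V$ corresponds to $a$ and $Y \cap P_{V_0}$ to the real hypersurface $Z$. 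Applying Proposition~\ref{prop:projection} with $m = 1$ together with Corollary~\ref{cor:classic}, the hyperbolicity of $X$ with respect to $V$ forces $Z$ to be hyperbolic, in the classical hypersurface sense, with respect to the point $a$.

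\emph{The key identification.} By G\"arding's theorem the connected component $C_0$ of $a$ in $\pp^{k+1}(\R) \setminus Z(\R)$ is such that the cone over $C_0$ in $\R^{k+2}$ is a disjoint union of an open convex cone and its negative, and $Z$ is hyperbolic with respect to every point of $C_0$. The heart of the argument is then the claim that, under the identification above, $C(V) \cap P_{V_0} = C_0$. One inclusion is formal: $C_0$ is connected, contains $V$, and lies in $\Gr(\ell-1,d)(\R) \setminus Y(\R)$ because $C_0 \subseteq P_{V_0} \setminus Z = P_{V_0} \setminus Y$, so $C_0 \subseteq C(V) \cap P_{V_0}$. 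For the other inclusion, $C(V)$ is open and closed in $\Gr(\ell-1,d)(\R) \setminus Y(\R)$, hence $C(V) \cap P_{V_0}$ is open and closed in $P_{V_0}(\R) \setminus Z(\R) \cong \pp^{k+1}(\R) \setminus Z(\R)$, so it is a union of connected components of the latter, one of them being $C_0$; what must be shown is that there are no others.

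\emph{Deducing the two statements.} Granting $C(V) \cap P_{V_0} = C_0$, slice‑convexity of $C(V)$ with respect to $V$ is precisely G\"arding's statement transported to the linear space $\widehat{P_{V_0}} \cong \R^{k+2} \subset \R^{N+1}$: the cone over $C(V) \cap P_{V_0}$ is a convex cone together with its negative, the piece lying in $\Gr^{\dagger}$ being convex. For the hyperbolicity assertion, let $V' \in C(V) \cap P_{V_0} = C_0$; then $V' \cap X = \emptyset$ since $V' \in C(V) \subseteq \Gr(\ell-1,d) \setminus Y$. Let $U$ be any real $\ell$-plane containing $V'$; as $V_0 \subseteq V' \subseteq U$, the image $\pi_{V_0}(U)$ is a real line through $\pi_{V_0}(V') \in C_0$, so hyperbolicity of $Z$ with respect to $\pi_{V_0}(V')$ gives $Z \cap \pi_{V_0}(U) \subseteq Z(\R)$, whence every point of $X \cap U$ has a real image under $\pi_{V_0}$ (because $\pi_{V_0}(X \cap U) \subseteq Z \cap \pi_{V_0}(U)$). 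Finally $\pi_V$ factors as $\rho \circ \pi_{V_0}$, with $\rho \colon V_0^{\perp} \dashrightarrow V^{\perp}$ the real projection from the point $a$; hence every point of $X \cap U$ has a real image under $\pi_V$ as well, and property $(\star)$ of Proposition~\ref{prop:dividing_general} (valid because $X$ is hyperbolic with respect to $V$) then forces that point to be real. Thus $X \cap U \subseteq X(\R)$ for all such $U$, i.e. $X$ is hyperbolic with respect to $V'$.

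\emph{The main obstacle.} The difficult step is the one left open above --- excluding connected components of $\pp^{k+1}(\R) \setminus Z(\R)$ other than $C_0$ from $C(V) \cap P_{V_0}$. This is a genuinely global statement, since a priori two components of the slice could be joined by a path inside $\Gr(\ell-1,d)(\R) \setminus Y(\R)$ that leaves $P_{V_0}$. I would approach it by first proving that the locus of $W$ for which $X$ is hyperbolic is both closed (a limit of $\ell$-plane sections consisting only of real points again consists only of real points) and open (locally, by projecting as above, this reduces to the classical fact that hyperbolicity of a polynomial with respect to a point is an open condition) in $\Gr(\ell-1,d)(\R) \setminus Y(\R)$; being a union of components and containing $V$, it then contains all of $C(V)$. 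For $V'' \in C(V) \cap P_{V_0}$ this yields $Z$ hyperbolic with respect to $\pi_{V_0}(V'')$ via Proposition~\ref{prop:projection}, and an analysis of the hyperbolicity region of $Z$ --- built from the open convex hyperbolicity cones of the irreducible factors of its equation --- should place $\pi_{V_0}(V'')$ in $C_0$. The openness of the hyperbolicity locus, with the usual care about real intersection points of higher multiplicity, together with the component bookkeeping when $X$ is reducible, are the points that require real work.
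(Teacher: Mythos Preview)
Your approach is the paper's: fix a codimension--one $V_0\subset V$, project from $V_0$ to reduce to a hyperbolic hypersurface $Z=\pi_{V_0}(X)$ in $\pp^{k+1}$, and invoke G\aa rding. The paper's proof is three sentences: $\pi_0(X)$ is hyperbolic with respect to $\pi_0(V)$; for $V'\in P_{V_0}$ one has $X$ hyperbolic with respect to $V'$ iff $\pi_0(X)$ is hyperbolic with respect to $\pi_0(V')$; and G\aa rding gives that the cone over the hyperbolicity set of $\pi_0(X)$ is convex.

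Two remarks. First, your derivation of hyperbolicity with respect to $V'\in C_0$ via the factorisation $\pi_V=\rho\circ\pi_{V_0}$ and property $(\star)$ is an unnecessary detour: Proposition~\ref{prop:projection} (with $m=1$ and $V_0\subset V'$) already gives the equivalence ``$X$ hyperbolic w.r.t.\ $V'$ $\Longleftrightarrow$ $Z$ hyperbolic w.r.t.\ $\pi_{V_0}(V')$'' directly, exactly as the paper uses it.

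Second, and more importantly, you are right to flag the inclusion $C(V)\cap P_{V_0}\subseteq C_0$ as the delicate point, and the paper's proof does not address it either --- it simply names ``the hyperbolicity set of $\pi_0(X)$'' and does not tie it to $C(V)\cap P_{V_0}$. In the paper's architecture this lemma is only a stepping stone: the subsequent Theorem~\ref{thm:connected_component} shows that \emph{every} $V'\in C(V)$ is a witness by a metric path argument that moves one basis vector at a time, and each such step stays in the corresponding $C_0$, so only the easy inclusion $C_0\subseteq C(V)\cap P_{V_0}$ and G\aa rding on $C_0$ are actually used there. Full slice--convexity is then restated as Corollary~\ref{cor:sliceconvex} after that theorem. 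In other words, what the lemma's proof really establishes is convexity of (the cone over) $C_0$ and hyperbolicity on $C_0$; the equality with $C(V)\cap P_{V_0}$ --- and hence the lemma precisely as stated --- needs the later theorem (your proposed ``open and closed'' argument is essentially that theorem) together with the identification of the hyperbolicity set of $Z$ with $C_0$, which the paper itself notes is only automatic under extra hypotheses such as irreducibility. So your worry is legitimate, but it is about a point the paper defers rather than one it resolves inside this proof.
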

\begin{proof}
Let $V_0 \subset V$ be a real subspace of codimension $1$ in $V$. Projection $\pi_0$ from $V_0$ will map $X$ to a hypersurface hyperbolic with respect to the $\pi_0(V)$. For every other real $d-k-1$-plane $V^{\prime}$, such that $V \cap V^{\prime} = V_0$, $X$ is hyperbolic with respect to $V^{\prime}$ if and only if $\pi_0(X)$ is hyperbolic with respect to $\pi_0(V^{\prime})$. By \cite{Ga59} we know that the cone over the hyperbolicity set of $\pi_0(X)$ consists of two convex cones.
\end{proof}

For the proof of the following theorem we will fix a metric $d$ on $\Gr(\ell-1,d)(\R)$ that induces the classical topology on it. There are several ways to do that, for example we can embed $\Gr(\ell-1,d)(\R)$ in $M_{d+1}(\R)$, by sending a space to the orthogonal projection onto it with respect to the standard scalar product on $\R^{d+1}$. Then the distance between two spaces is the norm of the difference of the associated projections. The important feature of the classical topology on the Grassmannian and hence of the metric is that an open neighborhood of $V$, that is spanned by $v_0,\ldots,v_{\ell-1}$, consists of spaces $V^{\prime}$, spanned by $v^{\prime}_0,\ldots,v^{\prime}_{\ell-1}$, such that each $v^{\prime}_j$ is in a neighborhood of the respective $v_j$. We will formulate this more precisely in the following lemma:

\begin{lem} \label{lem:metric}
Let $V, V^{\prime} \in \Gr(\ell-1,d)(\R)$ if $D(V,V^{\prime}) < \epsilon$, then we can choose orthonormal bases $v_0,\ldots,v_{\ell-1}$ and $v_0^{\prime},\ldots,v_{\ell-1}^{\prime}$ for $V$ and $V^{\prime}$, respectively, such that $\|v_j - v_j^{\prime}\| < \sqrt{2} \epsilon$, for every $j = 0, \ldots, \ell -1$. Conversely take $V, V^{\prime} \in \Gr(\ell - 1,d)(\R)$, such that $V \cap V^{\prime}$ has an orthonormal basis $v_0,\ldots,v_r$ ($r = -1$ if the intersection is trivial) and complete it to orthonormal bases $v_0,\ldots,v_{\ell-1}$ and $v_0,\ldots,v_r,v_r^{\prime},\ldots,v_{\ell-1}^{\prime}$ for $V$ and $V^{\prime}$, respectively, then if $\| v_j - v_j^{\prime}\| < \delta$ for $j = r,\ldots,\ell-1$, then $d(V,V^{\prime}) < 2 (\ell - r - 1) \delta$.
\end{lem}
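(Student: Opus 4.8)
The statement is a two-sided comparison between the metric $d(V,V') = \|P_V - P_{V'}\|$ and the size of perturbations of orthonormal frames, and I would prove the two implications separately; essentially all the content sits in the first.

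\emph{From the metric to the frames.} Set $A = P_{V'}|_V \colon V \to V'$, a map of $\ell$-dimensional real inner-product spaces (we may assume $\epsilon \le 1$, the case $\epsilon > 1$ being trivial since any two unit vectors are within $2 < \sqrt{2}\,\epsilon$). If $d(V,V') < \epsilon$, then for every unit vector $v \in V$ one has $v - Av = (\operatorname{id} - P_{V'})v \perp P_{V'}v = Av$, so Pythagoras gives $\|Av\|^2 = 1 - \|v - Av\|^2 = 1 - \|(P_V - P_{V'})v\|^2 > 1 - \epsilon^2 \ge 0$. Hence $A$ is injective, so (equal dimensions) invertible, and every singular value $\sigma_i$ of $A$ satisfies $1 - \epsilon^2 < \sigma_i^2 \le 1$ (the upper bound since $\|A\| \le \|P_{V'}\| = 1$). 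The singular value decomposition of $A$ supplies orthonormal bases $u_1,\dots,u_\ell$ of $V$ and $u_1',\dots,u_\ell'$ of $V'$ with $Au_i = \sigma_i u_i'$, and since $P_{V'}$ is a self-adjoint idempotent, $\langle u_i, u_i'\rangle = \sigma_i^{-1}\langle u_i, P_{V'}u_i\rangle = \sigma_i^{-1}\|P_{V'}u_i\|^2 = \sigma_i$. Therefore
\[
\|u_i - u_i'\|^2 = 2(1 - \sigma_i) \le 2(1 - \sigma_i^2) < 2\epsilon^2,
\]
i.e. $\|u_i - u_i'\| < \sqrt{2}\,\epsilon$ for all $i$, and the frames $(u_i)$, $(u_i')$ are the desired bases. (This is just the CS/principal-angle decomposition of the pair $V, V'$ rephrased: $\sigma_i = \cos\theta_i$ and $\|P_V - P_{V'}\| = \sin\theta_{\max}$.)

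\emph{From the frames to the metric.} Here the computation is routine. Writing the projections in the given orthonormal bases, $P_V = \sum_{j=0}^{\ell-1} v_j v_j^{\top}$ and $P_{V'} = \sum_{j=0}^{r} v_j v_j^{\top} + \sum_{j>r} v_j'(v_j')^{\top}$, the rank-one projections attached to the common subspace $V \cap V'$ cancel, leaving
\[
P_V - P_{V'} = \sum_{j>r}\bigl(v_j v_j^{\top} - v_j'(v_j')^{\top}\bigr) = \sum_{j>r}\bigl(v_j(v_j - v_j')^{\top} + (v_j - v_j')(v_j')^{\top}\bigr),
\]
so each of the $\ell - r - 1$ summands has norm at most $\|v_j\|\,\|v_j - v_j'\| + \|v_j - v_j'\|\,\|v_j'\| = 2\|v_j - v_j'\| < 2\delta$, and the triangle inequality yields $d(V,V') < 2(\ell - r - 1)\delta$ (this is of substance only when $V \ne V'$, i.e. $r \le \ell - 2$; note also that the whole lemma is unaffected if $d$ is defined through the Frobenius norm, since $\|ww^{\top}\|_F = \|w\|^2$ as well).

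The only genuine obstacle is in the first implication, namely tying $d(V,V')$ to the singular values of $P_{V'}|_V$: the Pythagoras estimate $\|Av\|^2 > 1 - \epsilon^2$ and the resulting bound $\sigma_i^2 > 1 - \epsilon^2$ are exactly what produce the sharp constant $\sqrt{2}$ (a cruder argument bounding $\|Av\| > 1 - \epsilon$ directly would only give $\sqrt{2\epsilon}$). Everything downstream — the SVD, the elementary inequality $1 - \sigma_i \le 1 - \sigma_i^2$ for $\sigma_i \in (0,1]$, and the rank-two perturbation bookkeeping of the converse — is mechanical.
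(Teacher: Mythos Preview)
Your argument is correct. For the converse direction your rank-one bookkeeping $v_jv_j^{\top}-v_j'(v_j')^{\top}=v_j(v_j-v_j')^{\top}+(v_j-v_j')(v_j')^{\top}$ is exactly the paper's computation, just packaged matricially rather than applied to a test unit vector; the two are the same proof.

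For the first implication you take a genuinely different route. The paper fixes orthonormal bases of $V$ and $V'$, writes $v_0'=\sum_j\alpha_jv_j+u$ with $u\perp V$, observes $\|u\|<\epsilon$, and then argues $1=\sum_j\alpha_j^2+\|u\|^2<\alpha_0^2+\epsilon^2$ to get $1-\alpha_0<\epsilon^2$ and hence $\|v_0-v_0'\|^2=2(1-\alpha_0)<2\epsilon^2$; it then repeats ``similarly for every other index''. As written this step needs the off-diagonal $\alpha_j$ to vanish, i.e.\ the bases must already be aligned so that $Pv_j'\in\R v_j$, which is not guaranteed by an arbitrary choice of frames. Your use of the singular value decomposition of $P_{V'}|_V$ (equivalently, the principal-angle decomposition of the pair $V,V'$) is precisely what produces such aligned bases simultaneously for all indices, and the estimate $\sigma_i^2>1-\epsilon^2$ then gives the sharp $\sqrt{2}\,\epsilon$ cleanly. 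So your approach buys a rigorous, uniform construction of the two frames where the paper's coordinate argument is at best implicit about how the bases are chosen; conversely, the paper's argument is more elementary in that it avoids invoking the SVD, at the cost of leaving that alignment step to the reader.
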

\begin{proof}
For the first part fix orthonormal bases $v_0,\ldots,v_{\ell-1}$ and $v_0^{\prime},\ldots,v_{\ell-1}^{\prime}$ for $V$ and $V^{\prime}$, respectively. Note that every $v^{\prime} \in V^{\prime}$ we can write uniquely as $v^{\prime} = v + u$, where $V \in V$ and $u \in V^{\perp}$. Take a unit vector $v^{\prime} \in V^{\prime}$, then by assumption $\|u\| = \|P v^{\prime} - P^{\prime} v^{\prime}\| < \epsilon$, where $P$ and $P^{\prime}$ are orthogonal projections onto $V$ and $V^{\prime}$,respectively. Now write $v_0^{\prime} = \sum_j \alpha_j v_j + u$ and let us assume that $\alpha_0 > 0$ (otherwise replace $v_0$ with $-v_0$). Then:
\[
\langle v_0^{\prime} - v_0 ,v_0^{\prime} - v_0 \rangle = 2 ( 1 - \alpha_0 ).
\]
On the other hand $\|u\| < \epsilon$. Now using the fact that $v_0$ is normal we get:
\[
1 = \|v_0\|^2 = \sum_j \alpha_j^2 + \|u\|^2 < \alpha_0^2 + \epsilon^2.
\]
Hence we get that $ 1 - \alpha_0^2 < \epsilon^2$. Next note that $1 - \alpha_0^2 \geq 1 - \alpha_0$, since $0 \leq \alpha_0 \leq 1$. Thus $1 - \alpha_0 < \epsilon^2$ and $\|v_0^{\prime} - v_0\| < \sqrt{2} \epsilon$. Similarly for every other index.

For the second part take any unit vector $u$ and write $P u = \sum_{j=0}^{\ell-1} \langle u, v_j \rangle v_j$ and similarly $P^{\prime} u = \sum_{j=0}^{r} \langle u , v_j \rangle v_j + \sum_{j=r+1}^{\ell-1} \langle u, v_j^{\prime} \rangle v_j^{\prime}$. Write $\alpha_j = \langle u,v_j \rangle$ and $\alpha^{\prime} = \langle u, v_j^{\prime} \rangle$. Then we have:
\[
\| Pu - P^{\prime} u\| \leq \sum_{j=r+1}^{\ell-1} \|\alpha_j v_j - \alpha_j^{\prime} v_j^{\prime} \|.
\]
Next note that using Cauchy-Schwartz and the fact that $u$ is a unit vector we get that $|\alpha_j - \alpha_j^{\prime}| < \delta$. Therefore:
\[
\|\alpha_j v_j - \alpha_j^{\prime} v_j^{\prime} \| = \| (\alpha - \alpha^{\prime}) v_j + \alpha^{\prime} (v_j - v_j^{\prime}) \| < \delta + |\alpha^{\prime}| \delta < 2 \delta.
\]
Hence $\| P - P^{\prime}\| < 2 (\ell - r -1) \delta$. 
\end{proof}

For simplicity, if $X$ is hyperbolic with respect to $V$, we will say that $V$ witnesses the hyperbolicity of $X$ or shortly that $V$ is a witness.

\begin{thm} \label{thm:connected_component}
Assume that $X$ is hyperbolic with respect to $V$, then $X$ is hyperbolic with respect to every $V^{\prime} \in C(V)$.
\end{thm}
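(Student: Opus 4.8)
The plan is to set
\[
S \;=\; \{\, V' \in C(V) \mid X \text{ is hyperbolic with respect to } V' \,\}
\]
and to prove $S = C(V)$. Since $V \in S$ by hypothesis, and since $C(V)$ is a connected component of the open subset $\Gr(\ell-1,d)(\R)\setminus Y(\R)$ of the smooth manifold $\Gr(\ell-1,d)(\R)$ --- hence itself open and path-connected --- it is enough to establish a one-step propagation statement: \emph{if $V'\in S$ then every $V''\in C(V)$ that is close enough to $V'$ in the fixed metric $d$ again lies in $S$}. Chaining this along a path from $V$ to an arbitrary point of $C(V)$ then yields $S = C(V)$.

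For the propagation statement the crucial remark is that if $V'\in S$ then $C(V') = C(V)$ (the two components share the point $V'$), so Lemma \ref{lem:sliceconvex_V} applied with $V'$ in place of $V$ tells us that $X$ is hyperbolic with respect to \emph{every} element of $C(V)\cap P_{V_0'}$, for \emph{every} codimension-one real subspace $V_0'\subset V'$; in other words $S$ is stable under ``rotating within a pencil $P_{V_0'}$ without leaving $C(V)$''. Now let $V''\in C(V)$. Using the first part of Lemma \ref{lem:metric} choose real orthonormal bases $v_0,\dots,v_{\ell-1}$ of $V'$ and $v_0',\dots,v_{\ell-1}'$ of $V''$ with all $\|v_j-v_j'\|$ small, and interpolate by
\[
V^{(i)} \;=\; \Span\{\, v_0',\dots,v_{i-1}',\,v_i,\dots,v_{\ell-1} \,\},\qquad i=0,\dots,\ell,
\]
so that $V^{(0)}=V'$, $V^{(\ell)}=V''$, and $V^{(i)}$ and $V^{(i+1)}$ both contain the codimension-one real subspace $V_0^{(i)}=\Span\{\, v_0',\dots,v_{i-1}',\,v_{i+1},\dots,v_{\ell-1}\,\}$, i.e.\ both lie in $P_{V_0^{(i)}}$. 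By Lemma \ref{lem:metric} (and continuity of the span of a tuple of vectors) each $V^{(i)}$ lies within distance $\rho(V')$ of $V'$ once $V''$ is close enough to $V'$, where $\rho(V')$ denotes the distance from $V'$ to the topological boundary of $C(V)$; hence each $V^{(i)}$ lies inside $C(V)$, because $C(V)$ is open and therefore contains the ball of radius $\rho(V')$ about $V'$. An induction on $i$ now shows $V^{(i)}\in S$: the case $i=0$ is $V^{(0)}=V'\in S$, and if $V^{(i)}\in S$ then, by the stability property above applied to $V^{(i)}$ and to $V_0^{(i)}\subset V^{(i)}$, $X$ is hyperbolic with respect to every element of $C(V)\cap P_{V_0^{(i)}}$, in particular with respect to $V^{(i+1)}$. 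Thus $V''=V^{(\ell)}\in S$.

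To globalize, take any $V''\in C(V)$ and a path $\gamma\colon[0,1]\to C(V)$ with $\gamma(0)=V$, $\gamma(1)=V''$. The image $\gamma([0,1])$ is a compact subset of the open set $C(V)$, so its distance $\epsilon_0$ to the topological boundary of $C(V)$ is positive; tracking the constants in the propagation statement, the threshold for ``close enough'' at a point $W\in\gamma([0,1])$ may be taken to be a fixed function of $\rho(W)\geq\epsilon_0$ and of $\ell$ alone, hence bounded below uniformly along $\gamma$. Choosing, by uniform continuity of $\gamma$, a partition $0=t_0<t_1<\dots<t_M=1$ with each $d(\gamma(t_{m-1}),\gamma(t_m))$ below that uniform threshold, we get $\gamma(t_0)=V\in S$ and then, applying the propagation statement $M$ times, $\gamma(t_m)\in S$ for all $m$; in particular $V''=\gamma(t_M)\in S$. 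Hence $S=C(V)$.

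The main obstacle is the propagation statement, and within it the delicate point is turning ``$V''$ is metrically close to $V'$ in $\Gr(\ell-1,d)(\R)$'' into ``$V''$ is joined to $V'$ by a short chain of pencils $P_{V_0^{(i)}}$, all of it lying inside $C(V)$'' --- which is exactly the purpose of the estimates in Lemma \ref{lem:metric}. Once that bookkeeping is in place, the only substantive input is Lemma \ref{lem:sliceconvex_V}, which in turn repackages G\"{a}rding's convexity theorem for hyperbolic hypersurfaces via the projection of Corollary \ref{cor:classic}. One should also note that, $V'$ and $V''$ being real, the orthonormal bases furnished by Lemma \ref{lem:metric} may be chosen with real entries, so that the intermediate planes $V^{(i)}$ and the subspaces $V_0^{(i)}$ are again real, as required for Lemma \ref{lem:sliceconvex_V}. (Alternatively one could prove openness of $S$ in $C(V)$ as above and closedness by a separate limit argument --- via Proposition \ref{prop:hyperbolic-equiv}, $Y$ cuts out a degree-$n$ binary form on each $1$-dimensional Schubert cycle, limits of real roots stay real, and the degree does not drop since the limiting plane avoids $Y(\R)$ --- but the propagation statement already subsumes both.)
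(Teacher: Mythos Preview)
Your argument is correct and follows essentially the same route as the paper's proof: both establish a one-step propagation by interpolating between two nearby $\ell-1$-planes via a chain $V^{(0)},\dots,V^{(\ell)}$ obtained by swapping one basis vector at a time (so that consecutive planes share a codimension-one subspace and lie in a common pencil $P_{V_0^{(i)}}$), invoke Lemma~\ref{lem:sliceconvex_V} on each pencil, control the distances via Lemma~\ref{lem:metric}, and then globalize along a path using uniform continuity. Your write-up is somewhat more explicit than the paper's about the identity $C(V')=C(V)$ for $V'\in S$ and about the realness of the interpolating planes, but these are exactly the points the paper leaves implicit.
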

\begin{proof}
We will prove the claim in two steps, first we'll show that the set of all witnesses is open. Then we will use a metric argument to show that in fact every $V^{\prime} \in C(V)$ is a witness.

For the first argument take a ball with radius $\epsilon > 0$ around $V$ that is contained in $C(V)$ and take a point $V^{\prime} \in C(V)$ such that $d(V,V^{\prime}) < \epsilon/2 \sqrt{2}\ell$. Fix orthonormal bases $v_0,\ldots, v_{\ell-1}$ and $v^{\prime}_0,\ldots,v^{\prime}_{\ell-1}$ for $V$ and $V^{\prime}$, respectively. By Lemma \ref{lem:metric} we know that $\|v_j - v_j\| < \epsilon/ 2 \ell$, for $j=0,\ldots,\ell-1$. Now set $V_0$ the space spanned by $v_1,\ldots,v_{\ell-1}$ and $W_0$ the space spanned by $V_0$ and $v^{\prime}_0$. Applying Lemma \ref{lem:metric} again we get that $d(V,W_0) < \epsilon/\ell$ and clearly $W_0 \in P_{V_0} \cap C(V)$, hence in particular $W_0$ is a witness.  Now we proceed inductively each time replacing a single basis vector and the distance between each two consecutive points will be less than $\epsilon/\ell$. Therefore by the triangle inequality they are all contained in the ball with radius $\epsilon$ around $V$. This shows that the set of witnesses contains the ball with radius $\epsilon/2 \sqrt{2} \ell$ around $V$ and thus it is open. 

Take now any $V^{\prime} \in C(V)$ and since the Grassmannian is path connected , we can connect it with a simple path $p \colon [0,1] \to \Gr(\ell-1,d)(\R)$ to $V$, $p(0) = V$, that is contained in $C(V)$. Let $\epsilon > 0$ be the distance from the path to the associated hypersurface, that is defined since both are compact. Since the path $p$ is continuous from a compact set it is uniformly continuous hence there exists $\delta > 0$, such that if $|t -s| < \delta$ then $d(p(t),p(s)) < \epsilon/2\sqrt{2}\ell$. By the first part $p([0,\delta)$ consists of witnesses now just cover $[0,1]$ by segments of length $\delta$ and apply the first part repeatedly to see that $p(1) = V^{\prime}$ is a witness.
\end{proof}

\begin{cor} \label{cor:sliceconvex}
The set $C(V)$ is slice convex.
\end{cor}
\begin{proof}
Apply Corollary \ref{lem:sliceconvex_V} to each and every $V \in C(V)$.
\end{proof}

There is a connection between hyperbolicity and determinantal representations encoded in the following proposition.

\begin{prop} \label{prop:definite-hyper}
Assume $X$ admits a \vr{} Hermitian Livsic-type determinantal representation, $\gamma \in \wedge^{k+1} \C^{d+1} \otimes M_n(\C)$. Assume, furthermore, that for some real $V$, we have that $\gamma(V)$ is positive definite, then $X$ is hyperbolic with respect to $V$.
\end{prop}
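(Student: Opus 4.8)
The plan is to reduce to the classical ($\ell=1$) Helton--Vinnikov-style statement by slicing. Fix a real linear $d-k$-plane $U$ containing $V$; we must show $X\cap U\subset X(\R)$. Pick a real vector $u\in U$ linearly independent from $V$, so that $U$ is spanned by $V$ and $u$. Since the representation is \vr{} and $\gamma(V)$ is invertible (as $\gamma(V)>0$), Lemma \ref{lem:sol-space} together with Corollary \ref{cor:vr_necessary} tells us that, at least for $U$ meeting $D(\gamma)$ transversely, the matrices $A_j=\gamma(V)^{-1}\gamma(V,u,j)$ for $j=0,\dots,d-k-1$ commute, are semi-simple, and the points of $U\cap D(\gamma)$ correspond to joint eigenvalues of the $A_j$, with the kernel-sheaf fibers spanning $\C^n$. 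So the real coordinate of an intersection point along $u$ (relative to the $V$-directions) is, up to the obvious identification, an eigenvalue of a specific one of these matrices, say $A_{i_0}=\gamma(V)^{-1}\gamma(V,u,i_0)$.

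The key point is definiteness. Since $\gamma$ is Hermitian and $V$ is real, each $\gamma_I$ is Hermitian, hence $\gamma(V)$ and each $\gamma(V,u,i)$ are Hermitian matrices; $\gamma(V)>0$ by hypothesis. A Hermitian pencil $\gamma(V,u,i)-t\,\gamma(V)$ with $\gamma(V)>0$ is diagonalizable by a $\gamma(V)$-orthonormal basis and has only real eigenvalues (this is the standard fact that the eigenvalues of $\gamma(V)^{-1}\gamma(V,u,i)$ are real when $\gamma(V)>0$, exactly as in the hypersurface case recalled in the introduction). Therefore every $t_i$ arising from an intersection point of $U$ with $D(\gamma)=X$ is real, the point $u+\sum t_j v_j$ has real coordinates in the real frame $\{u,v_0,\dots,v_{d-k-1}\}$, and so $X\cap U\subset X(\R)$. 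This handles all $U$ meeting $X$ transversely.

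To remove the transversality hypothesis I would argue by a density/limiting argument, as in \cite{HelVin07}: the real $d-k$-planes $U\supset V$ form $P_{V_0}$-type families (a real projective space, cf.\ the discussion before Proposition \ref{prop:hyperbolic-equiv}), those meeting $X$ transversely are dense, and for each such $U$ all intersection points are real by the previous paragraph. Since $X\cap U$ depends upper-semicontinuously on $U$ (the intersection cycle is cut out by the vanishing of $\det(\gamma(V,u,i_0)-t\gamma(V))$, a polynomial in $t$ whose coefficients depend continuously, in fact algebraically, on $U$), a non-real point in some special $X\cap U_0$ would be a limit of points in nearby $X\cap U$; since the nearby ones are real and the real locus $\pp^d(\R)$ is closed, the limit is real as well. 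Equivalently: in any real $d-k$-plane $U\supset V$, the polynomial $\det(\gamma(V,u,i_0)-t\,\gamma(V))$ has only real roots because $\gamma(V)>0$ and $\gamma(V,u,i_0)$ is Hermitian, with no transversality needed at all; this is the cleanest route and the one I would actually write.

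The main obstacle is the bookkeeping in identifying the relevant matrix $A_{i_0}$ and checking that an intersection point $u+\sum t_j v_j$ of $U$ with $D(\gamma)$ indeed forces each $t_j$ (not just one of them) to be an eigenvalue of a Hermitian-definite pencil — i.e.\ that the joint spectrum of the commuting family $\{A_j\}$ is real. This follows because the $A_j$ are simultaneously diagonalized in a $\gamma(V)$-orthonormal basis (each $\gamma(V,u,j)$ being Hermitian and $\gamma(V)>0$), so every joint eigenvalue is a tuple of real numbers; but making this precise requires care with the exterior-algebra sign conventions of \eqref{eq:gamma_v_coord} and with the passage from ``joint eigenvalue'' to ``coordinates of the point'' in Lemma \ref{lem:sol-space}. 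Once that identification is in hand, the hyperbolicity is immediate from the reality of Hermitian-definite pencil spectra.
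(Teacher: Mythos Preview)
Your argument is correct and follows essentially the same route as the paper's own proof: pick a real $d-k$-plane $U\supset V$, write an intersection point as $u+\sum_j t_j v_j$, and use Lemma~\ref{lem:sol-space} to see that each $t_i$ satisfies $(\gamma(V,u,i)-t_i\,\gamma(V))w=0$; since $\gamma(V,u,i)$ is a real linear combination of the Hermitian $\gamma_I$ and $\gamma(V)>0$, each such $t_i$ is real. Your detours through Corollary~\ref{cor:vr_necessary} and the transversality/density discussion are unnecessary --- as you yourself note in the ``Equivalently'' sentence, the Hermitian-definite pencil argument handles every $U$ directly, and that is exactly what the paper does (in particular the \vr{} hypothesis is not actually used in the proof, only that $X=D(\gamma)$).
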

\begin{proof}
Let $U$ be a real $\ell$-plane containing $V$. Fix a basis $v_0,\ldots,v_{\ell}$ for $V$ and add a vector $u$ to complete it to a basis of $U$. Since $\gamma(V)$ is positive definite, it is invertible and therefore by Lemma \ref{lem:inclusion} we know that $V \cap X = \emptyset$. Let $u + \sum_{j=0}^{\ell-1} t_j v_j$ be a point of intersection of $U$ and $X$. By definition we have a $w \in \C^n$, such that:
\[
\left(\gamma \wedge \left( u + \sum_{j=0}^{\ell-1} t_j v_j \right) \right) w = 0.
\]
Recall that by Lemma \ref{lem:sol-space} we have that the $t_j$ are eigenvalues of $\gamma(V)^{-1} \gamma(V,u,i)$. Note that $\gamma(V,u,i)$ is Hermitian, since the representation is Hermitian and thus $\gamma(V,u,i)$ is a linear combination with real coefficients of Hermitian matrices. Since $\gamma(V)$ is positive definite, conclude that $\gamma(V)^{-1} \gamma(V,u,i)$ is also Hermitian and thus all its eigenvalues are real.
\end{proof}

Hyperbolicity of hypersurfaces has been studied extensively since the notion was introduced. A few analogous questions arise in our setting.

Consider the Grassmannian embedded in $\pp^N$ via the Pl\"{u}cker embedding. This embedding is projectively normal and since $\operatorname{Pic}(\Gr(\ell-1,d)) \cong \Z$ we obtain that every hypersurface in the Grassmannian is obtained via an intersection with a hypersurface in $\pp^N$. In a series of works H. Buseman discussed various notions of convexity of a subset of the cone over Grassmannian (in the Pl\"{u}cker embedding), e.g. \cite{Bus61}. In particular he calls extendably convex those sets in the cone that are intersections with convex sets in the ambient space. 

\begin{quest}
Let $X$ be an irreducible real variety in $\pp^d$ with $\codim X = \ell > 1$ and let $Y \subset \Gr(\ell-1,d)$ be the hypersurface associated to $X$ via the incidence correspondence. Is it true that the cone over $C(V)$ intersection $\Gr^{\dagger}$ is an extendably convex set in $\R^{N+1}$? Furthermore is it true that $C(V)$ coincides with the set of all witnesses to the hyperbolicity of $X$?
\end{quest}

In Section \ref{sec:hyper_curve} we will show that in the case of curves the cone over the set of all witnesses intersection $\Gr^{\dagger}$ is extendably convex. However, we do not know yet whether this set coincides with $C(V)$ even in that case.

\begin{quest}[Generalized Lax Conjecture, cf. \cite{Vppf}]
Assume we have a real variety $X \subset \pp^d$ of dimension $k$ hyperbolic with respect to a real $d-k-1$-plane $V$, does there exist a real hyperbolic $X^{\prime} \subset \pp^d$, such that $X \cup X^{\prime}$ admits a Livsic-type Hermitian determinantal representation $\gamma \in \wedge^{k+1} \C^{d+1} \otimes M_n(\C)$, such that $\deg X \cup X^{\prime} = n$ and $\gamma(U)$ is definite for a real $d-k-1$-plane $U$ if and only if $U \in C(V)$?
\end{quest}

In Section \ref{sec:hyper_curve} we will obtain such a (multi)linear matrix inequality representation in the case where $X$ is an irreducible curve, without any auxiliary variety $X^{\prime}$, for the set of all witnesses instead of $C(V)$.

\section{Bezoutians of Meromorphic Functions on a Riemann Surface} \label{sec:bezoutian}

Let $X$ be a compact Riemann surface of genus $g$. Fix a canonical basis for the homology of $X$, $A_1,\ldots,A_g,B_1,\ldots,B_g$ and fix a normalized basis for holomorphic differentials, $\omega_1,\ldots,\omega_g$. Normalization means that $\int_{A_j} \omega_i = \delta_{ij}$. Set $\Omega$, the $B$-period matrix, given by columns of the form $\begin{pmatrix} \int_{B_j} \omega_1 & \cdots & \int_{B_j} \omega_g \end{pmatrix}^{T}$. Then $J(X) = \C^g/\left(\Z^g + \Omega Z^g\right)$ is the Jacobian variety of $X$. Fix a point $p_0 \in X$ and set $\varphi \colon X \to J(X)$ the Abel-Jacobi map, given by:
$$
\varphi(p) = \begin{pmatrix}
\int_{p_0}^p \omega_1, \cdots, \int_{p_0}^p \omega_g
\end{pmatrix}.
$$
Extend $\varphi$ linearly to all divisors on $X$. Thus by writing $\varphi(\cL)$ for a line bundle $\cL$ on $X$, we mean the image of the corresponding divisor.

Fix a line bundle of half-order differentials $\Delta$ on $X$, such that $\varphi(\Delta)= -\kappa$, the Riemann constant. Additionally fix a flat line bundle $\chi$ on $X$, such that $h^0(\chi \otimes \Delta) = 0$. Since $\chi$ is flat, the sections of $\chi$ lift to functions on $\tilde{X}$, the universal cover of $X$, that satisfy for every $T \in \pi_1(X)$ and every $\tilde{p} \in \tilde{X}$:
\[
f( T \tilde{p}) = a_{\chi}(T) f(\tilde{p}).
\]
Here $a_{\chi}$ is the constant factor of automorphy associated to $\chi$. In fact a choice of a trivialization of $\chi$ in a neighborhood of a point $p$ is equivalent to a choice of a lift $\tilde{p} \in \tilde{X}$. We can also lift $\varphi$ to a map from $\tilde{X}$ to $\C^g$. Since every $\tilde{p}$ is represented by a point $p \in X$ and a path $c$ connecting $p_0$ to $p$, then:
\[
\varphi(\tilde{p}) = \begin{pmatrix}
\int_c \tilde{\omega}_1, \cdots, \int_c \tilde{\omega}_g
\end{pmatrix}.
\]
The differential $\tilde{\omega}_j$ is the pullback of $\omega_j$ to $\tilde{X}$ via the coveting map.

Let us write $\theta(z)$ for the theta function associated to the lattice $\Z^g + \Omega Z^g$, where $\Omega$ is the period matrix of $X$, namely:
\[
\theta(z) = \sum_{m \in \Z^g} e^{2 \pi i \langle \Omega m, m \rangle + 2 \pi i \langle z, m \rangle}.
\]
We will also need the theta function with characteristic, so for $a,b \in \R^g$ we define:
\[
\theta{a \brack b}(z) = \sum_{m \in \Z^g} e^{\pi i \langle \Omega( m + a), m+a \rangle} e^{2 \pi i \langle z + b, m + a \rangle}.
\]

Recall from \cite{BV-ZPF} that there exists a Cauchy kernel $K(\chi,p,q)$ a meromorphic map of line bundles on $X\times X$ with only a simple pole along the diagonal with residue $1$,  given by:
\begin{equation}
K(\chi,p,q) = \frac{\theta{a \brack b}(\varphi(q) - \varphi(p))}{\theta{a \brack b}(0) E_{\Delta}(q,p)}.
\end{equation}
Where $\varphi(\chi) = b + \Omega a$ and $E_{\Delta}(\cdot,\cdot)$ is the prime form $X \times X$, with respect to $\Delta$. Pulling back $K(\chi,\cdot,\cdot)$ to $\tilde{X}$, we get a section of the pullback of $\Delta$ satisfying:
\[
\frac{K(\chi,T \tilde{p}, R \tilde{q})}{\sqrt{dt}(T \tilde{p}) \sqrt{ds}(R \tilde{q})} = a_{\chi}(T) \frac{K(\chi,\tilde{p},\tilde{q})}{\sqrt{dt}(\tilde{p}) \sqrt{ds}(\tilde{q})} a_{\chi}(R)^{-1}.
\]
See \cite{AV} for details. Here $t$ and $s$ are local coordinates on $X$ centered at $p$ and $q$, respectively, and $T,R \in \pi_1(X)$. The pullback is holomorphic at $(\tilde{p},\tilde{q})$, as long as $p \neq q$.

Let $f$ and $g$ be two meromorphic functions with simple poles.  We define a meromorphic section of $\Hom(\pi_2^* \chi, \pi_1^* \chi \otimes \pi_1^* \Delta \otimes \pi_2^* \Delta)$ on $X \times X$:
$$
b_{\chi}(f,g)(p,q) = \left( f(p) g(q) - f(q) g(p) \right) K(\chi, p, q).
$$
Assume that $p$ is not a pole of either $f$ or $g$ and fix a local coordinate $t$ centered at $p$. Now if $q$ tends to $p$ we get:
\begin{multline*}
f(p) (g(p) + g^{\prime}(p) t + \ldots) - (f(p) + f^{\prime}(p) t + \ldots) g(p) \to \\ (f(p) g^{\prime}(p) - f^{\prime}(p) g(p)) t + \ldots
\end{multline*}
Since the residue of $K(\chi,\cdot,\cdot)$ along the diagonal is $1$ we get that:
$$
b_{\chi}(f,g)(p,q) \to f(p) g^{\prime}(p) - f^{\prime}(p) g(p).
$$
Note that this is independent of the choice of the lifts of $p$ and $q$, since when $\tilde{q}$ will go to $\tilde{p}$, the factors of automorphy will cancel out in the limit.

Now observe that since $K(\chi,\cdot,\cdot)$ is holomorphic off the diagonal, we get that $b_{\chi}(f,g) = 0$ if and only if $f/g = const.$. Indeed if we fix $p$ that is neither a pole nor a zero for either $f$ or $g$, we get that for every $q$ in an open set in $X$ we have the equality:
$$
\frac{f(p)}{g(p)} = \frac{f(q)}{g(q)}.
$$
Note that $b_{\chi}$ is alternating and linear as a function of $f$ and $g$. Hence we have that:
$$
b_{\chi}(\alpha_1 f + \beta_1 g, \alpha_2 f + \beta_2 g) = (\alpha_1 \beta_2 - \alpha_2 \beta_1) b_{\chi}(f,g).
$$
Given a set of points $S = \{p_1, \ldots,p_m\} \subset X$, we define an effective reduced divisor $D = \sum_{j=1}^m p_j$. Recall that $\cL(D)$ is the space of all meromorphic functions $f$ on $X$ satisfying $(f) + D \geq 0$. In other words that is to say that $f$ has at most simple poles on $S$ and is holomorphic on $X \setminus S$. Furthermore, for every point $p_j$ we fix a lift $\tilde{p}_j$ to $\tilde{X}$ and fix a local coordinate $t_j$ centered at $p_j$ and a corresponding local holomorphic frame $\sqrt{dt_j}$ of $\Delta$. Then we define for every point $p \in X \setminus S$ two sections of $\left(\chi \otimes \Delta\right)^{\oplus m}$:
\begin{equation*}
\bucl(p) = \begin{pmatrix}
\frac{K(\chi,p,\tilde{p}_1)}{\sqrt{dt_1}(\tilde{p}_1)} & \cdots & \frac{K(\chi, p, \tilde{p}_m)}{\sqrt{dt_m}(\tilde{p}_m)}
\end{pmatrix} \mbox{ and } \buc(p) = \begin{pmatrix}
K(\chi,\tilde{p}_1,p)/\sqrt{dt_1}(\tilde{p}_1) \\ \vdots \\ K(\chi,\tilde{p}_m,p)/\sqrt{dt_m}(\tilde{p}_m)
\end{pmatrix}.
\end{equation*}
Note that changing the lift $\tilde{p}_j$ will result in the multiplication of $\buc(p)$ and $\bucl(p)$ by the a diagonal matrix of the constant factors of automorphy. Changing the coordinates will result in multiplication by a diagonal matrix of transition function for $\Delta$ at $p$.

\begin{prop} \label{prop:bezoutians_formula}

Set $D = \sum_{j=1}^m p_j$ be an effective reduced divisor and let $f,g \in \cL(D)$. Then there exists a matrix $B_{\chi,D}(f,g) \in M_m(\C)$, such that for $p \neq q$:
\begin{equation} \label{eq:bezoutian_fundamental}
b_{\chi}(f,g)(p,q) = \bucl(p) B_{\chi,D}(f,g) \buc(q) = \sum_{i,j=1}^m b_{ij} \frac{K(\chi,p,\tilde{p}_i)K(\chi,\tilde{p}_j,q)}{\sqrt{dt_j}(\tilde{p}_j)\sqrt{dt_i}(\tilde{p}_i)}.
\end{equation}
Whereas when $p = q$ is not a pole of either $f$ or $g$, we fix a coordinate $t$ centered at $p$ and get the limit version:
\begin{equation} \label{eq:bezoutian_fundamental_lim}
\bucl(p) B_{\chi,D}(f,g) \buc(p) = f(p) g^{\prime}(p) - f^{\prime}(p) g(p).
\end{equation}
Here for every $j$, the $t_j$ are local coordinates centered at $p_j$. The equality is to be understood literally if neither $p$ or $q$ are poles of $f$ or $g$ and as a limit in case at least one of them is a pole.
\end{prop}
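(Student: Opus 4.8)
The plan is to expand the Bezoutian section $b_{\chi}(f,g)$ one variable at a time, using the reproducing property of the Cauchy kernel that is built into the hypothesis $h^{0}(\chi\otimes\Delta)=0$. First I would fix a generic point $q\in X$ --- not a pole of $f$ or $g$, and distinct from $p_{1},\dots,p_{m}$ --- and regard $p\mapsto b_{\chi}(f,g)(p,q)$ as a meromorphic section of $\chi\otimes\Delta$. Its only candidate poles in $p$ are at $q$, from $K(\chi,\cdot,q)$, and at the $p_{i}$, from $f$ and $g$; but the factor $f(p)g(q)-f(q)g(p)$ vanishes at $p=q$ (exactly the computation that precedes the proposition), so it cancels the simple pole of $K$ there, and what remains is a section of $\chi\otimes\Delta$ with at most simple poles at $p_{1},\dots,p_{m}$. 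Since $h^{0}(\chi\otimes\Delta)=0$, any such section $s$ is determined by its residues: if $r_{i}=\operatorname{Res}_{p_{i}}s$ in the coordinate $t_{i}$, then $s-\sum_{i}r_{i}K(\chi,\cdot,\tilde p_{i})/\sqrt{dt_{i}}(\tilde p_{i})$ is a global holomorphic section of $\chi\otimes\Delta$, hence zero (this is the reproducing property of the Cauchy kernel, cf.\ \cite{BV-ZPF}). Reading off the Laurent expansion of $b_{\chi}(f,g)(p,q)$ at $p=p_{i}$, with $f=a_{i}/t_{i}+\cdots$, $g=b_{i}/t_{i}+\cdots$ and $K(\chi,p,q)=K(\chi,\tilde p_{i},q)/\sqrt{dt_{i}}(\tilde p_{i})+O(t_{i})$, gives
\[
b_{\chi}(f,g)(p,q)=\sum_{i=1}^{m}\beta_{i}(q)\,\frac{K(\chi,p,\tilde p_{i})}{\sqrt{dt_{i}}(\tilde p_{i})},\qquad \beta_{i}(q)=\bigl(a_{i}g(q)-b_{i}f(q)\bigr)\frac{K(\chi,\tilde p_{i},q)}{\sqrt{dt_{i}}(\tilde p_{i})}.
\]

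Next I would run exactly the same argument in the $q$-variable. The point that makes this work is that $a_{i}g(q)-b_{i}f(q)$ is \emph{holomorphic} at $q=p_{i}$: the principal parts of $a_{i}g$ and $b_{i}f$ at $p_{i}$ are both $a_{i}b_{i}/t_{i}$ and cancel. Hence each $\beta_{i}$ is a meromorphic section of $\chi^{-1}\otimes\Delta$ in the variable $q$ with at most simple poles at $p_{1},\dots,p_{m}$ (the pole at $p_{i}$ being contributed only by $K(\chi,\tilde p_{i},\cdot)$). Now $h^{0}(\chi^{-1}\otimes\Delta)=0$ as well: since $\deg(\chi\otimes\Delta)=g-1$, Riemann--Roch gives $h^{0}(\chi\otimes\Delta)=h^{1}(\chi\otimes\Delta)$, and Serre duality together with $\Delta^{\otimes2}\cong K_{X}$ identifies $h^{1}(\chi\otimes\Delta)$ with $h^{0}(\chi^{-1}\otimes\Delta)$. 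So the reproducing property applies again and produces constants $b_{ij}=\operatorname{Res}_{q=p_{j}}\beta_{i}(q)$ with
\[
\beta_{i}(q)=\sum_{j=1}^{m}b_{ij}\,\frac{K(\chi,\tilde p_{j},q)}{\sqrt{dt_{j}}(\tilde p_{j})}.
\]
Substituting this into the previous display and setting $B_{\chi,D}(f,g)=(b_{ij})_{i,j=1}^{m}$ yields \eqref{eq:bezoutian_fundamental} on the dense open set of $(p,q)$ where the derivation is valid, hence on all of $\{p\neq q\}$ by meromorphic continuation. Finally, \eqref{eq:bezoutian_fundamental_lim} follows by letting $q\to p$ with $p=q$ not a pole of $f$ or $g$: the right-hand side tends to $\bucl(p)B_{\chi,D}(f,g)\buc(p)$ by continuity, and the left-hand side to $f(p)g'(p)-f'(p)g(p)$ by the limit computation preceding the statement.

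I expect the main obstacle to be bookkeeping rather than a single hard estimate. One must carefully verify the two cancellations --- the vanishing of $f(p)g(q)-f(q)g(p)$ along the diagonal, and the cancellation of the principal parts inside $a_{i}g-b_{i}f$ at $p_{i}$ --- so that at each of the two stages the section really does have poles bounded by the reduced divisor $D$, which is what lets the Cauchy kernel reproduce it. The only genuinely conceptual point is the observation that the single hypothesis $h^{0}(\chi\otimes\Delta)=0$ automatically forces $h^{0}(\chi^{-1}\otimes\Delta)=0$, so that the reproducing property is available in \emph{both} variables; beyond that one just has to keep track of the diagonal frames $\sqrt{dt_{i}}$ and the chosen lifts $\tilde p_{i}$ through the residue computations so that the two expansions are compatible.
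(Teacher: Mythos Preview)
Your proof is correct and follows essentially the same two-stage strategy as the paper: expand $b_{\chi}(f,g)(\cdot,q)$ as a meromorphic section of $\chi\otimes\Delta$ with simple poles along $D$ using $h^{0}(\chi\otimes\Delta)=0$, then expand each residue coefficient $\beta_{i}(q)$ as a section of $\chi^{-1}\otimes\Delta$ using $h^{0}(\chi^{-1}\otimes\Delta)=0$. The paper carries out the same two residue comparisons, phrasing them as ``matching principal parts'' rather than as a reproducing property, and in addition writes down the resulting entries $b_{ij}$ explicitly; your version is more explicit about the two pole cancellations (at the diagonal and at $p_{i}$ inside $a_{i}g-b_{i}f$) and about why the Serre-dual vanishing $h^{0}(\chi^{-1}\otimes\Delta)=0$ follows automatically from the hypothesis.
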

\begin{proof}
Let us fix a point $q$ not in $D$. Let $t$ be a local coordinate centered at $q$. The map $\rho \colon X \to X \times X$, defined by $p \mapsto (p,q)$, satisfies $\pi_1 \circ \rho = 1_X$ and $\pi_2 \circ \rho(p) = q$. Hence $\rho^* \pi_1^* \cF = \cF$ and $\rho^* \pi_2^* \cF = \cF_q$, for every sheaf $\cF$ on $X$. Hence if we divide out by $1/\sqrt{dt}(q)$, we'll get that both sides of \eqref{eq:bezoutian_fundamental} are meromorphic sections of $\chi \otimes \Delta$. Since this line bundle admits no holomorphic sections, except for $0$, it suffices to show that both sections have the same poles and identical principal parts at these poles. Clearly the poles of the sections thus obtained are precisely the poles of $f$ and $g$ on the left-hand side and $D$ on the right hand side. If $p_i$ is not a pole of either $f$ or $g$, we'll set $b_{ij} = b_{ji}  = 0$, for every $j$. Therefore we may assume that $D$ consists precisely of poles of either $f$ or $g$.

Write the Laurent expansion of $f$ and $g$, with respect to $t_j$:
$$
f(t_j) = \frac{a_j}{t_j} + b_j + \ldots
$$
$$
g(t_j) = \frac{c_j}{t_j} + d_j + \ldots
$$
Now we set:
$$
b_{ij} = \left\{\begin{array}{cc} (a_i c_j - a_j c_i)\frac{K(\chi,\tilde{p}_i,\tilde{p}_j)}{\sqrt{dt_i}(\tilde{p}_i)\sqrt{dt_j}(\tilde{p}_j)}, & i \neq j \\a_i d_i - b_i c_i , & i=j \end{array} \right.
$$

For a fixed $i_0$, we have that the left hand side of \eqref{eq:bezoutian_fundamental} (multiplied by $1/\sqrt{dt}(q)$) has a simple pole with residue $(a_{i_0} g(q) - c_{i_0} f(q))K(\chi,p_{i_0},q)$. The right hand side has also a simple pole with residue $\sum_{j=1}^m b_{i_0 j} K(\chi,p_j,q)$. Both expressions can be considered as maps from $\chi$ to $\Delta$ or in other words, sections of $\chi^{\vee} \otimes \Delta$, the Serre dual of $\chi \otimes \Delta$. By Riemann-Roch we get that $h^0(\chi^{\vee} \otimes \Delta) = 0,$ as well. Hence we can apply similar considerations to those sections. We note that the poles are precisely $S$ and computing the residues we obtain the equality with $b_{i_0 j}$ defined above.

To get \eqref{eq:bezoutian_fundamental_lim} we fix a lift $\tilde{p}$ of $p$ and pass to the limit. Next we note that since changing $\tilde{p}$ to $T \tilde{p}$ will result in cancellation, the equality is independent of the choice of the lift.
\end{proof}

\begin{rem}
In particular note that due to cancellation of the constant factors of automorphy appearing when we change the choice of $\tilde{p}_j$, we get that the formula is independent of the choice of those lifts.
\end{rem}

This leads us to the following definition:

\begin{definition}
We define the Bezoutian of the functions $f$ and $g$ with respect to the divisor $D$ as the matrix $B_{\chi,D}(f,g)$.
\end{definition}
One can see immediately from the proof that is $D \leq D^{\prime}$ are two effective reduced divisors on $X$ and $f,g \in \cL(D)$, then $B_{\chi,D}(f,g)$ is a submatrix of $B_{\chi,D^{\prime}}$ and furthermore $B_{\chi,D^{\prime}}(f,g)$ is obtained by padding $B_{\chi,D}(f,g)$ with zeroes to the required size.

There are a few choices made in the construction of the Bezoutian matrix. Changing the lifts of the $p_j$ will result in the conjugation of $B_{\chi,D}(f,g)$ by diagonal unitary matrices of the constant factors of automorphy of $\chi$. Similarly changes in coordinates result in conjugation by the respective matrices of transition functions.

The following corollary is immediate from the definition of the Bezoutian:
\begin{cor}
Let $D = \sum_{j=1}^m p_j$ be an effective reduced divisor on $X$. Then the Bezoutian defines a linear map
\[
B_{\chi,D} \colon \wedge^2 \cL(D) \to M_m(\C).
\]
\end{cor}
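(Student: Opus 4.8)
The plan is to verify that the assignment $(f,g) \mapsto B_{\chi,D}(f,g)$ is a bilinear and alternating map $\cL(D) \times \cL(D) \to M_m(\C)$. Since $\cL(D)$ is a finite-dimensional complex vector space (by Riemann--Roch), the universal property of the second exterior power then produces a unique linear map $\wedge^2 \cL(D) \to M_m(\C)$ through which it factors, and this is the asserted map $B_{\chi,D}$.

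For bilinearity and alternation I would work directly from the closed formula for the entries $b_{ij}$ of $B_{\chi,D}(f,g)$ established in the proof of Proposition \ref{prop:bezoutians_formula}. Writing the Laurent expansions $f = a_j/t_j + b_j + \cdots$ and $g = c_j/t_j + d_j + \cdots$ at each $p_j$, one has $b_{ij} = (a_i c_j - a_j c_i)\, K(\chi,\tilde p_i,\tilde p_j)/\big(\sqrt{dt_i}(\tilde p_i)\,\sqrt{dt_j}(\tilde p_j)\big)$ for $i \neq j$ and $b_{ii} = a_i d_i - b_i c_i$ (with the convention that $b_{ij} = 0$ when $p_i$ is not a pole of either function, which is automatic since then $a_i = c_i = 0$). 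The coefficients $a_j, b_j$ are linear functionals of $f$ and $c_j, d_j$ are linear functionals of $g$, so every $b_{ij}$ is bilinear in $(f,g)$, hence so is $B_{\chi,D}$. Interchanging the roles of $f$ and $g$ swaps $a_j \leftrightarrow c_j$ and $b_j \leftrightarrow d_j$, which sends $a_i c_j - a_j c_i \mapsto -(a_i c_j - a_j c_i)$ and $a_i d_i - b_i c_i \mapsto -(a_i d_i - b_i c_i)$; therefore $B_{\chi,D}(g,f) = -B_{\chi,D}(f,g)$, and in particular $B_{\chi,D}(f,f) = 0$.

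An alternative, more conceptual route uses instead the defining identity \eqref{eq:bezoutian_fundamental}: the kernel-valued pairing $(f,g) \mapsto b_{\chi}(f,g)(p,q) = \big(f(p)g(q) - f(q)g(p)\big)K(\chi,p,q)$ is manifestly bilinear and alternating, so it suffices to know that $B_{\chi,D}(f,g)$ is the \emph{unique} matrix $B \in M_m(\C)$ satisfying $\bucl(p)\, B\, \buc(q) = b_{\chi}(f,g)(p,q)$ for $p \neq q$; this uniqueness amounts to the linear independence, as $p$ and $q$ vary, of the components of the Cauchy-kernel vectors $\bucl(p)$ and $\buc(q)$, which follows from $h^0(\chi \otimes \Delta) = 0 = h^0(\chi^{\vee}\otimes\Delta)$ together with the fact that the sections $K(\chi,\tilde p_j,\cdot)/\sqrt{dt_j}(\tilde p_j)$ have their single poles at the distinct points $p_j$. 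I expect the only point requiring care to be exactly this uniqueness/linear-independence step in the second route; since it is already implicitly used in Proposition \ref{prop:bezoutians_formula}, and since the first route sidesteps it entirely by bookkeeping the Laurent coefficients, I would present the first route and close by invoking the universal property of $\wedge^2 \cL(D)$.
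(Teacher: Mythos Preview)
Your proposal is correct and is essentially the same as the paper's approach: the paper gives no proof at all, stating only that the corollary is ``immediate from the definition of the Bezoutian,'' and your first route---reading off bilinearity and alternation from the explicit entry formulas $b_{ij}$ in Proposition~\ref{prop:bezoutians_formula}---is precisely a careful unpacking of that immediacy. Your second route is equally in the spirit of the paper, since the bilinearity and alternation of $b_{\chi}(f,g)(p,q)$ are already recorded just before Proposition~\ref{prop:bezoutians_formula}, and the uniqueness of the matrix $B$ is exactly what the principal-parts argument in that proof establishes.
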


\begin{prop} \label{prop:bezotian_symmetric}
If $\chi \otimes \chi \cong \cO$, i.e., $\varphi(\chi)$ is a half-period, then $B_{\chi,D}(f,g)$ is a complex symmetric matrix.
\end{prop}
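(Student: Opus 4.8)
The plan is to show that when $\chi \otimes \chi \cong \cO$ the Cauchy kernel enjoys a symmetry $K(\chi, p, q) = K(\chi, q, p)$ (as a section of the appropriate line bundle on $X \times X$), and to feed this into the defining formula \eqref{eq:bezoutian_fundamental} for the Bezoutian. Indeed, once we know the kernel is symmetric, the right-hand side of \eqref{eq:bezoutian_fundamental},
\[
\sum_{i,j=1}^m b_{ij} \frac{K(\chi,p,\tilde{p}_i)K(\chi,\tilde{p}_j,q)}{\sqrt{dt_j}(\tilde{p}_j)\sqrt{dt_i}(\tilde{p}_i)},
\]
can be rewritten by swapping the roles of $p$ and $q$; combined with the fact that $b_\chi(f,g)(p,q) = -b_\chi(f,g)(q,p)$ is alternating (which forces $B_{\chi,D}(f,g)^T$ to represent $-b_\chi(f,g)(q,p) = b_\chi(f,g)(p,q)$ once the kernel symmetry is inserted), this will identify $B_{\chi,D}(f,g)$ with its own transpose. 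So the heart of the matter is the kernel symmetry, and the rest is bookkeeping with the sections $\buc$ and $\bucl$.

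To establish $K(\chi, p, q) = K(\chi, q, p)$ I would argue as follows. The condition $\chi \otimes \chi \cong \cO$ means $\varphi(\chi) = b + \Omega a$ is a half-period, i.e. $2a, 2b \in \Z^g$; equivalently the factor of automorphy $a_\chi$ is $\pm 1$-valued, so the line bundle $\chi$ is symmetric under the hyperelliptic-type sign issues and, more importantly, the theta function with characteristic $[a; b]$ has a definite parity: $\theta[a;b](-z) = (-1)^{4\langle a, b\rangle}\theta[a;b](z) = \pm \theta[a;b](z)$. Now recall the defining formula
\[
K(\chi,p,q) = \frac{\theta{a \brack b}(\varphi(q) - \varphi(p))}{\theta{a \brack b}(0) E_{\Delta}(q,p)}.
\]
Under $p \leftrightarrow q$ the theta argument changes sign, $\varphi(p)-\varphi(q) = -(\varphi(q)-\varphi(p))$, and the prime form is antisymmetric, $E_\Delta(p,q) = -E_\Delta(q,p)$. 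Thus
\[
K(\chi, q, p) = \frac{\theta{a \brack b}(\varphi(p)-\varphi(q))}{\theta{a \brack b}(0)\,E_\Delta(p,q)} = \frac{\pm\,\theta{a \brack b}(\varphi(q)-\varphi(p))}{-\,\theta{a \brack b}(0)\,E_\Delta(q,p)},
\]
and I need the sign $\pm$ coming from the parity of $\theta[a;b]$ to be exactly $-1$, so that the two minus signs cancel and $K(\chi,q,p) = K(\chi,p,q)$. This should follow because the half-order differential $\Delta$ and the choice of characteristic $[a;b]$ with $\varphi(\chi\otimes\Delta)$ (and the nonvanishing hypothesis $h^0(\chi\otimes\Delta)=0$) together force $[a;b]$ to be an \emph{odd} characteristic — this is precisely the parity computation relating $\theta$-characteristics to spin structures. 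I would cite the standard references already in use (\cite{AV}, \cite{BV-ZPF}) for the symmetry $K(\chi,p,q)=K(\chi,q,p)$ rather than re-deriving the prime form identities from scratch.

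Once the kernel symmetry is in hand, the argument is short: writing $b_\chi(f,g)(p,q) = \bucl(p) B \buc(q)$ with $B = B_{\chi,D}(f,g)$, symmetry of $K$ gives $\bucl(p)^{\,} = \buc(p)^T$ entrywise (up to the local-frame and lift normalizations, which are the same diagonal factors on both sides), so $b_\chi(f,g)(p,q) = \buc(p)^T B \buc(q)$; on the other hand the alternating property yields $b_\chi(f,g)(p,q) = -b_\chi(f,g)(q,p) = -\buc(q)^T B \buc(p) = -\bigl(\buc(p)^T B^T \buc(q)\bigr)$. Comparing, and using that the components of $\buc$ are linearly independent meromorphic sections (they have poles at distinct points of $D$ with nonzero residues, by the residue-$1$ normalization of $K$), we conclude $B = -B^T$... wait — this would give antisymmetry, not symmetry. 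The resolution is that the alternating sign of $b_\chi$ and the antisymmetry of the prime form inside $K$ combine so that the net effect is $B = B^T$: concretely, from $b_\chi(f,g)(p,q) = \sum b_{ij} K(\chi,p,\tilde p_i)K(\chi,\tilde p_j, q)/(\cdots)$ and $b_\chi(f,g)(q,p) = \sum b_{ij}K(\chi,q,\tilde p_i)K(\chi,\tilde p_j,p)/(\cdots) = \sum b_{ij}K(\chi,\tilde p_i,q)K(\chi,p,\tilde p_j)/(\cdots)$ (using kernel symmetry) $= \sum b_{ji}K(\chi,p,\tilde p_i)K(\chi,\tilde p_j,q)/(\cdots)$, so $b_\chi(f,g)(q,p)$ is represented by $B^T$; but the proof of Proposition \ref{prop:bezoutians_formula} identifies $B_{\chi,D}(f,g)$ uniquely (as the unique matrix making \eqref{eq:bezoutian_fundamental} hold, by vanishing of $h^0$), and $b_\chi(f,g)$ is alternating so that $b_\chi(f,g)(q,p)$ is \emph{also} equal to $-b_\chi(g,f)(p,q)$... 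The cleanest route, which I will actually carry out, is to bypass the sign juggling entirely: use the limit formula \eqref{eq:bezoutian_fundamental_lim}, $\bucl(p) B \buc(p) = f(p)g'(p) - f'(p)g(p)$, together with kernel symmetry $\bucl(p) = \buc(p)^T$ pointwise, to get $\buc(p)^T B \buc(p) = \buc(p)^T B^T \buc(p)$ for all $p \notin D \cup (f)_\infty \cup (g)_\infty$, i.e. the quadratic form $\buc(p)^T(B - B^T)\buc(p)$ vanishes identically; since $B - B^T$ is antisymmetric this is automatic and gives nothing, so instead I vary $p$ \emph{and} $q$ in the bilinear form \eqref{eq:bezoutian_fundamental}, use $K(\chi,p,\tilde p_i) = K(\chi,\tilde p_i, p)$ to rewrite everything in terms of one ordered family of sections, and invoke linear independence of the $m$ sections $p \mapsto K(\chi, p, \tilde p_i)$ (equivalently of $\buc$) in both arguments — this forces the coefficient matrix to equal its transpose, i.e. $B = B^T$.

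The main obstacle, and the only genuinely non-formal point, is verifying that the half-period hypothesis $\chi\otimes\chi \cong \cO$ really does deliver the kernel symmetry $K(\chi,p,q) = K(\chi,q,p)$ — that is, pinning down the parity of the theta characteristic $[a;b]$ attached to $\chi\otimes\Delta$ and checking it combines correctly with the antisymmetry of the prime form. Everything after that is the linear-algebra bookkeeping sketched above, relying on the uniqueness part of Proposition \ref{prop:bezoutians_formula} and the linear independence of the Cauchy-kernel sections $\{K(\chi,\cdot,\tilde p_i)\}_{i=1}^m$.
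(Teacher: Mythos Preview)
You have the parity of the theta characteristic backwards, and this is the source of all the sign confusion that follows. An odd half-integer characteristic satisfies $\theta[a;b](-z)=-\theta[a;b](z)$, so in particular $\theta[a;b](0)=0$. But $\theta[a;b](0)$ sits in the denominator of the Cauchy kernel, and its nonvanishing is exactly the standing hypothesis $h^0(\chi\otimes\Delta)=0$. Hence the half-period $\varphi(\chi)$, being off the theta divisor, is necessarily an \emph{even} characteristic, and $\theta[a;b]$ is an even function. Together with the antisymmetry of the prime form this gives $K(\chi,p,q)=-K(\chi,q,p)$: the kernel is \emph{anti}-symmetric, not symmetric.

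This single sign flip resolves everything. Since $f(p)g(q)-f(q)g(p)$ is also antisymmetric under $p\leftrightarrow q$, the product $b_\chi(f,g)(p,q)$ is \emph{symmetric}: $b_\chi(f,g)(p,q)=b_\chi(f,g)(q,p)$. Now $\buc(p)=-\bucl(p)^T$ entrywise (from the kernel antisymmetry), so
\[
\bucl(p)\,B\,\buc(q)=b_\chi(f,g)(p,q)=b_\chi(f,g)(q,p)=\bucl(q)\,B\,\buc(p)=\buc(p)^T B^T \bucl(q)^T=\bucl(p)\,B^T\,\buc(q),
\]
and linear independence of the sections gives $B=B^T$. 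This is precisely the paper's argument; your outline was structurally correct but the wrong parity sent you chasing the contradiction $B=-B^T$ and the subsequent attempted workarounds.
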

\begin{proof}
If $\varphi(\chi)$ is a half-period and it is off the theta divisor, it must be an even characteristic. Hence the resulting theta function is even. Since the prime form is anti-symmetric, we get that $K(\chi,p,q) = -K(\chi,q,p)$ in this case. Therefore $b_{\chi}(f,g)(p,q) = b_{\chi}(f,g)(q,p)$ and thus the resulting Bezoutian is symmetric.
\end{proof}

The following proposition shows that the pullbacks of $\buc$ and $\bucl$ to $\tilde{X}$ as vector valued functions have certain duality and independence properties.
\begin{prop} \label{prop:u_independence}
Let $D= \sum_{j=1}^m p_j$ be an effective and reduced divisor on $X$. Let $g \in \cL(D)$ of degree $r$ and take an unramified fiber $g^{-1}(z) = \{q_1,\ldots,q_r\}$, for some $z \in \C$. For each $j=1,\ldots,r$, fix a lift $\tilde{q}_j \in \tilde{X}$. Then the vectors $\buc(\tilde{q}_j)$ are linearly independent and the same is true for $\bucl(\tilde{q}_j)$.

Set $W = \Span \{ \buc(\tilde{q}_1),\ldots, \buc(\tilde{q}_r)\}$ and $W_{\ell} = \Span \{ \bucl(\tilde{q}_1),\ldots, \bucl(\tilde{q}_r)\}$. For every $f \in \cL(D)$ that does not vanish at $q_1,\ldots,q_r$, the matrix $B_{\chi,D}(f,g)$ defines a non-degenerate pairing between the subspaces $W$ and $W_{\ell}$. For every $v \in W$ and $v_{\ell} \in W_{\ell}$ set $[v,v_{\ell}] = v_{\ell} B_{\chi,D}(f,g) v$. Then the $\buc(\tilde{q}_j)$ and $\bucl(\tilde{q}_j)$ are dual with respect to that pairing.
\end{prop}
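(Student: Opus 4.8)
The plan is to reduce the entire statement to a single computation: the $r\times r$ matrix $M$ with entries $M_{ij}=\bucl(\tilde q_i)\,B_{\chi,D}(f,g)\,\buc(\tilde q_j)$ is diagonal with nonzero diagonal entries. Granting this, everything follows by elementary linear algebra. If $\sum_k\lambda_k\buc(\tilde q_k)=0$, then left–multiplying by $\bucl(\tilde q_i)\,B_{\chi,D}(f,g)$ and using the diagonality of $M$ gives $\lambda_i M_{ii}=0$, hence $\lambda_i=0$; the symmetric argument (right–multiplying by $B_{\chi,D}(f,g)\,\buc(\tilde q_j)$) handles the $\bucl(\tilde q_k)$. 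Then $\dim W=\dim W_\ell=r$, the two systems are bases, an invertible $M$ exhibits $B_{\chi,D}(f,g)$ as a non-degenerate pairing $W\times W_\ell\to\C$ in these bases, and $M_{ij}=M_{ii}\delta_{ij}$ with $M_{ii}\neq0$ is precisely biorthogonality (dual bases up to the scalars $M_{ii}$). So the real work is (i) proving the independence of the $\buc(\tilde q_k)$ directly, since the statement asks for it without reference to $f$, and (ii) computing $M$.

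For (i), suppose $\sum_{k=1}^r\lambda_k\buc(\tilde q_k)=0$ and argue as in the proof of Proposition~\ref{prop:bezoutians_formula}. The combination $\Phi:=\sum_k\lambda_k K(\chi,\,\cdot\,,\tilde q_k)$, with the normalizations coming from the chosen coordinates and lifts at the $q_k$, is a meromorphic section of $\chi\otimes\Delta$ with at most simple poles at $q_1,\dots,q_r$, whose principal part at $q_k$ is $\lambda_k$ times a nonzero quantity; the relation says exactly that $\Phi$ vanishes at $p_1,\dots,p_m$. Hence $\Phi\in H^0(\chi\otimes\Delta(Q-D))$, where $Q=g^{-1}(z)=\sum_k q_k$. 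Now $Q$ is linearly equivalent to the polar divisor $P_g$ of $g$ (the principal divisor of $g-z$ is $Q-P_g$), and $P_g\le D$ because $g\in\cL(D)$; therefore $\chi\otimes\Delta(Q-D)\cong\chi\otimes\Delta(P_g-D)$ is a nonpositive twist of $\chi\otimes\Delta$ and has no nonzero global sections, since $h^0(\chi\otimes\Delta)=0$. Thus $\Phi\equiv0$, forcing every $\lambda_k=0$. Running the identical argument with $K(\chi,\tilde q_k,\,\cdot\,)$ and the vanishing $h^0(\chi^{\vee}\otimes\Delta)=0$ (also used in the excerpt) gives the independence of the $\bucl(\tilde q_k)$.

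For (ii), invoke Proposition~\ref{prop:bezoutians_formula} twice. For $i\neq j$, \eqref{eq:bezoutian_fundamental} gives
\begin{equation*}
M_{ij}=b_\chi(f,g)(\tilde q_i,\tilde q_j)=\bigl(f(q_i)g(q_j)-f(q_j)g(q_i)\bigr)K(\chi,\tilde q_i,\tilde q_j),
\end{equation*}
which collapses to $z\bigl(f(q_i)-f(q_j)\bigr)K(\chi,\tilde q_i,\tilde q_j)$ since $g\equiv z$ on the whole fiber; normalizing the fiber value to $z=0$ (equivalently, using $g-z\in\cL(D)$, which changes neither $W$ nor $W_\ell$) makes these off-diagonal entries vanish. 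For the diagonal, the limiting identity \eqref{eq:bezoutian_fundamental_lim} in a local coordinate at $q_i$ gives
\begin{equation*}
M_{ii}=f(q_i)g'(q_i)-f'(q_i)g(q_i)=f(q_i)\,g'(q_i),
\end{equation*}
nonzero because $f(q_i)\neq0$ by hypothesis and $g'(q_i)\neq0$ as the fiber is unramified. (The $q_k$ avoid $S$ — otherwise $\buc(\tilde q_k)$ is not even defined — so they are poles of neither $f$ nor $g$ and the limiting formula applies.) Hence $M=\operatorname{diag}\bigl(f(q_1)g'(q_1),\dots,f(q_r)g'(q_r)\bigr)$ is invertible and, with the first paragraph, the proof is complete.

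I expect the genuinely delicate points to be bookkeeping rather than conceptual, since Proposition~\ref{prop:bezoutians_formula} already carries the analytic weight: keeping the principal parts and line-bundle trivializations straight in the definition of $\Phi$; the cohomology vanishing $h^0(\chi\otimes\Delta(Q-D))=h^0(\chi^{\vee}\otimes\Delta(Q-D))=0$, which is exactly where the hypothesis $g\in\cL(D)$ (forcing $Q\sim P_g\le D$) enters; and the normalization of the fiber value $z$ in the off-diagonal computation, which is the step a careful reader should scrutinize.
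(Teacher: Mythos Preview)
Your proposal is correct, and the computation in part (ii) is exactly the paper's argument: reduce to $z=0$, observe that \eqref{eq:bezoutian_fundamental} kills the off-diagonal entries, and use \eqref{eq:bezoutian_fundamental_lim} to get nonzero diagonal entries $f(q_i)g'(q_i)$ (the paper writes $-f(q_i)g'(q_i)$, an inconsequential sign slip).

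The one difference is that your part (i) is unnecessary. You read the statement as demanding that linear independence be established \emph{without reference to any $f$}, and so you give a separate cohomological argument via $h^0(\chi\otimes\Delta(Q-D))=0$. The paper does not do this: it simply picks some $f\in\cL(D)$ not vanishing on the fiber (the constant function $1$ works, for instance) and deduces independence from the diagonal matrix $M$ --- precisely the argument you yourself outline in your first paragraph. The statement merely asserts independence; any valid proof suffices, including one that uses an auxiliary $f$. Your cohomological detour is correct and pleasant, but it duplicates what part (ii) already provides.
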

\begin{proof}
Without loss of generality we may assume that $z = 0$, otherwise replace $g$ with $g-z$ and note that both $\buc$ and $\bucl$ are independent of $g$. Take some $f \in \cL(D)$ such that $f$ does not vanish on $q_1,\ldots,q_r$. Then for every $i \neq j$ we have that:
$$
\bucl(\tilde{q}_i) B_{\chi,D}(f,g) \buc(\tilde{q}_j) = 0.
$$
On the other hand we have that since those are simple zeroes of $g$, we get:
$$
\bucl(\tilde{q}_i) B_{\chi,D} \buc(\tilde{q}_i) = - f(q_i) g^{\prime}(q_i) \neq 0.
$$
Assume that there exist constants $\alpha_1,\ldots,\alpha_r \in \C$, such that the linear combination $\sum_{j=1}^r \alpha_j \buc(\tilde{q}_j) = 0$. Then premultiplying by $\buc(\tilde{q}_i)$, we get that $\alpha_i = 0$. Conclude that the vectors are linearly independent. Similarly for $\bucl$.
\end{proof}

Note that the result is independent of the choices in the construction of $\buc$ and $\bucl$, since the difference is multiplication by an invertible matrix. 

\begin{cor} \label{cor:u_independence_divisor}
Let $D = \sum_{j=1}^m p_j$ be an effective reduced divisor on $X$ and assume that $D$ is precisely the divisor of poles of a meromorphic function $g$. Let $z \in \C$, be such that $g$ is unramified over $z$ and set $g^{-1}(z) = \{q_1,\ldots,q_m\}$. For every $j=1,\ldots,m$, fix a lift $\tilde{q}_j \in \tilde{X}$. Then $\buc(\tilde{q}_j)$ and $\bucl(\tilde{q}_j)$ span $\C^m$ and are dual bases with respect to $B_{\chi,D}(1,g)$.
\end{cor}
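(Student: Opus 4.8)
The plan is to obtain the corollary as the special case of Proposition \ref{prop:u_independence} in which the degree of $g$ equals the number of points of $D$, so that the linearly independent families produced there are forced to be bases of $\C^m$.

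First I would record the numerical facts. Since $D = \sum_{j=1}^m p_j$ is reduced and is exactly the polar divisor of $g$, the function $g$ has precisely $m$ poles counted with multiplicity, so $\deg g = m$; hence for any $z$ over which $g$ is unramified the fiber $g^{-1}(z) = \{q_1,\ldots,q_m\}$ has exactly $m$ distinct points, i.e.\ $r = m$ in the notation of Proposition \ref{prop:u_independence}. Next I would verify that the hypotheses of that proposition hold for the pair $(f,g) = (1,g)$: the constant function $1$ lies in $\cL(D)$ because $D \geq 0$, and it is nowhere zero, so in particular it does not vanish at $q_1,\ldots,q_m$.

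Applying Proposition \ref{prop:u_independence} then gives that $\buc(\tilde{q}_1),\ldots,\buc(\tilde{q}_m)$ are linearly independent in $\C^m$, and likewise $\bucl(\tilde{q}_1),\ldots,\bucl(\tilde{q}_m)$. Since each family consists of $m$ linearly independent vectors in the $m$-dimensional space $\C^m$, each is a basis, so $W = \C^m = W_\ell$. The same proposition says that $B_{\chi,D}(1,g)$ induces a non-degenerate pairing between $W$ and $W_\ell$ --- here just a non-degenerate bilinear form on $\C^m$ --- with respect to which the two bases are dual, as recorded by $\bucl(\tilde{q}_i) B_{\chi,D}(1,g) \buc(\tilde{q}_j) = -\delta_{ij}\, g'(q_i)$ with $g'(q_i) \neq 0$. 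Independence of all of this from the choice of the lifts $\tilde{q}_j$ and of the local coordinates was already noted after Proposition \ref{prop:u_independence}. I do not expect a genuine obstacle: the only two points that need a word are the degree count $\deg g = m$ and the legitimacy of taking $f = 1$; everything else is a direct specialization.
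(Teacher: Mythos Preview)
Your proposal is correct and is exactly the argument the paper intends: the corollary is stated without proof immediately after Proposition \ref{prop:u_independence}, and your specialization (taking $f=1$ and noting that $\deg g = m$ forces $r=m$, so the independent families are bases) is precisely the intended deduction. There is nothing to add.
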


\begin{cor} \label{cor:common_zero}
Let $D$ be as in Corollary \ref{cor:u_independence_divisor} and $f,g \in \cL(D)$. Then if $f$ and $g$ have a common zero at $p$, then $B_{\chi,D}(f,g)\buc(\tilde{p}) = 0 $. Independently of the choice of the lift $\tilde{p}$. 
\end{cor}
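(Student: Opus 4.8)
The plan is to read the vanishing straight off the fundamental Bezoutian identity \eqref{eq:bezoutian_fundamental}, specializing the \emph{second} argument of $b_\chi(f,g)$ to the common zero $p$. Fix a lift $\tilde p$ of $p$ (which is legitimate, since $p\notin S$: a function cannot have both a pole and a zero at $p$). For a variable point $p'\in X$ avoiding $S$ and avoiding $p$, the identity \eqref{eq:bezoutian_fundamental} gives
\[
\bucl(p')\,B_{\chi,D}(f,g)\,\buc(\tilde p) \;=\; b_\chi(f,g)(p',p) \;=\; \bigl(f(p')g(p)-f(p)g(p')\bigr)\,K(\chi,p',p).
\]
Since $p$ is a common zero of $f$ and $g$, we have $f(p)=g(p)=0$, so the right-hand side is identically zero. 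Hence the column vector $v:=B_{\chi,D}(f,g)\,\buc(\tilde p)\in\C^m$ is annihilated by the row vector $\bucl(p')$ for every admissible $p'$.

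It then remains to produce $m$ admissible points $p'$ whose associated row vectors $\bucl(p')$ span $\C^m$. I would pick $z\in\C$ with $z\neq 0$ over which $g$ is unramified, and set $g^{-1}(z)=\{q_1,\dots,q_m\}$; there are exactly $m$ of them because $D$ is precisely the polar divisor of $g$, so $\deg g=m$. Each $q_j$ avoids $S$ (its $g$-value is finite) and avoids $p$ (since $g(q_j)=z\neq 0=g(p)$), so each $q_j$ is admissible. By Proposition \ref{prop:u_independence} (equivalently Corollary \ref{cor:u_independence_divisor}) the vectors $\bucl(\tilde q_1),\dots,\bucl(\tilde q_m)$ are linearly independent, hence a basis of $\C^m$. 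Taking $p'=q_j$ in the display above yields $\bucl(\tilde q_j)\,v=0$ for all $j$, whence $v=0$, i.e.\ $B_{\chi,D}(f,g)\,\buc(\tilde p)=0$.

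For independence of the choice of lift: replacing $\tilde p$ by $T\tilde p$ with $T\in\pi_1(X)$ multiplies $\buc(\tilde p)$ by the nonzero constant factor of automorphy $a_\chi(T)$, which does not affect the vanishing; likewise, altering the lifts $\tilde p_j$ or the local coordinates entering the construction of $B_{\chi,D}(f,g)$ only conjugates that matrix by an invertible diagonal matrix, so the conclusion persists. There is no genuinely hard step here — the only points requiring care are keeping straight which argument of $b_\chi(f,g)(\cdot,\cdot)$ is pinned at $p$ and which is free, and choosing the auxiliary fiber $\{q_j\}$ so as to miss both $S$ and $p$; the spanning property is quoted verbatim from the preceding results, and the degenerate case $f/g\equiv\text{const}$ needs no separate treatment since then $B_{\chi,D}(f,g)=0$ outright.
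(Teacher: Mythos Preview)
Your proof is correct and follows the same route as the paper's: specialize the second argument of \eqref{eq:bezoutian_fundamental} to the common zero $p$, deduce $\bucl(\tilde q)\,B_{\chi,D}(f,g)\,\buc(\tilde p)=0$ for all admissible $q$, and conclude by producing a spanning family $\bucl(\tilde q_1),\dots,\bucl(\tilde q_m)$ from an unramified fiber via Corollary~\ref{cor:u_independence_divisor}. The only cosmetic difference is which function furnishes that fiber---you use $g$ itself (reading $D=(g)_\infty$ from the hypothesis), whereas the paper passes to a generic combination $f+cg$ to force the polar divisor to be exactly $D$; either choice does the job.
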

\begin{proof}
Note that by choosing an appropriate constant $c$, the set $S$ is the divisor of poles of the function $f + c g$. By assumption, for every $q \in X$, we have that:
\[
b_{\chi}(f,g)(p,q) = 0.
\]
Therefore, by Proposition \ref{prop:bezoutians_formula} we conclude that:
\[
\bucl(\tilde{q}) B_{\chi,D}(f,g) \buc(\tilde{p}) = 0.
\]
Now by the assumption on $D$ and Corollary \ref{cor:u_independence_divisor} we conclude that there exist $q_1,\ldots,q_m$, such that $\bucl(\tilde{q}_j)$ are a basis for $\C^m$ dual to $\buc(\tilde{q}_j)$, hence $B_{\chi,D}(f,g) \buc(\tilde{p}) = 0$.

Note that if we replace $\tilde{p}$ by $T \tilde{p}$ for some $T \in \pi_1(X)$, then $\buc(T \tilde{p}) = a_{\chi}(T) \buc(p)$ and hence $B_{\chi,D}(f,g)\buc(\tilde{T p}) = 0$ as well.
\end{proof}
\begin{cor} \label{cor:det_vanish}
Let $D$ be as in Corollary \ref{cor:u_independence_divisor} and $f,g \in \cL(D)$. Then for every $p \in X \setminus S$, we have that:
$$
\left( g(p) B_{\chi,D}(1,f) - f(p) B_{\chi,D}(1,g) + B_{\chi,D}(f,g) \right) \buc(\tilde{p}) = 0.
$$
This equality is independent of the choice of $\tilde{p}$.
\end{cor}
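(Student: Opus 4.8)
\emph{Proof proposal.} The plan is to reduce the identity to Corollary~\ref{cor:common_zero} by recentering the functions $f$ and $g$ at the point $p$.

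First I would observe that since $D$ is effective the constant function $1$ lies in $\cL(D)$, so for any $p \in X \setminus S$ the functions $f - f(p)\cdot 1$ and $g - g(p)\cdot 1$ again belong to $\cL(D)$. By construction both of them vanish at $p$, i.e.\ they have a common zero at $p$. Since $D$ is, by the standing hypothesis (it is as in Corollary~\ref{cor:u_independence_divisor}), precisely the divisor of poles of a meromorphic function, Corollary~\ref{cor:common_zero} applies to the pair $\bigl(f - f(p),\, g - g(p)\bigr)$ and yields
\[
B_{\chi,D}\bigl(f - f(p),\, g - g(p)\bigr)\,\buc(\tilde p) = 0,
\]
with the vanishing independent of the chosen lift $\tilde p$ of $p$.

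It then remains to expand the matrix on the left using the fact that $B_{\chi,D}$ is bilinear and alternating as a map $\wedge^2 \cL(D) \to M_m(\C)$:
\[
B_{\chi,D}\bigl(f - f(p),\, g - g(p)\bigr) = B_{\chi,D}(f,g) - g(p)\,B_{\chi,D}(f,1) - f(p)\,B_{\chi,D}(1,g) + f(p)g(p)\,B_{\chi,D}(1,1).
\]
Using $B_{\chi,D}(1,1) = 0$ and $B_{\chi,D}(f,1) = -B_{\chi,D}(1,f)$, this matrix equals $g(p)\,B_{\chi,D}(1,f) - f(p)\,B_{\chi,D}(1,g) + B_{\chi,D}(f,g)$, so the displayed identity follows, together with its independence of the lift.

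I do not expect a real obstacle: the only thing needing care is verifying the hypotheses of Corollary~\ref{cor:common_zero} for the recentered pair --- that $D$ is still the polar divisor of a meromorphic function (a condition on $D$ alone, already assumed) and that $f - f(p)$ and $g - g(p)$ share the zero $p$ (immediate) --- after which everything is the elementary bilinear-algebra computation above. One could equally argue directly from Proposition~\ref{prop:bezoutians_formula}, since $b_\chi\bigl(f - f(p),\, g - g(p)\bigr)(q,p) \equiv 0$ in $q$ and the $\bucl(\tilde q_j)$ span $\C^m$ by Corollary~\ref{cor:u_independence_divisor}, but invoking Corollary~\ref{cor:common_zero} is the cleanest route.
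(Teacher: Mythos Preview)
Your proposal is correct and follows exactly the paper's approach: expand $B_{\chi,D}(f-f(p),g-g(p))$ using bilinearity and anti-symmetry, then invoke Corollary~\ref{cor:common_zero}. The paper's proof is just a terser version of what you wrote.
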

\begin{proof}
Note that by the anti-symmetry of the Bezoutian we have that:
$$
B_{\chi,D}( f - f(p), g - g(p) ) = g(p) B_{\chi,D}(1,f) - f(p) B_{\chi,D}(1,g) + B_{\chi,D}(f,g).
$$
Now just apply Corollary \ref{cor:common_zero}.
\end{proof}

\section{Real Riemann Surfaces and Dividing Functions} \label{sec:div_func}

In this section we'll keep the notations from the previous section and assume that $X$ is equipped with an anti-holomorphic involution $\tau$. Set $X(\R)$ the set of fixed points of $\tau$. Recall that $X$ is called dividing if $X \setminus X(\R)$ has two connected components.

\begin{definition}
We say that a meromorphic function $f$ is dividing, if $f(p) \in \pp^1(\R)$ if and only if $p \in X(\R)$.
\end{definition}. 
Note that if $X$ admits such a function then clearly $X$ is dividing and the two components of $X \setminus X(\R)$ are given by $X_{+} = \{ p \in X \mid \operatorname{Im} f(p) > 0\}$ and $X_{-} = \{ p \in X \mid \operatorname{Im} f(p) < 0\}$. The converse is also true, see \cite{Ahl47} and \cite[Sec.\ 4]{Ahl50}.
Let us call the orientation induced on $X(\R)$ from $X_{+}$, positive. If $p \in X(\R)$ and $t$ is a real coordinate centered at $p$ then the Laurent expansion of $f$ with respect to $t$ will have real coefficients. Furthermore, if we consider the function $f \circ t^{-1}$ as a meromorphic function on a disc, then it takes points with positive (resp. negative) imaginary parts to points with positive (resp. negative) imaginary parts as well.

The following proposition is in fact contained in \cite{Ahl47} and \cite{Ahl50}, we will recall the proof for the sake of completeness.
\begin{prop} \label{prop:dividing_func}
Let $f$ be a dividing function on $X$, then it has only simple poles and zeroes and its residues at the poles, with respect to a real local coordinate with positive orientation, are negative. Conversely, if $X$ is dividing and $f$ is a real meromorphic function on $X$ with simple real poles and negative residues with respect to positive real local coordinate, then $f$ is dividing.
\end{prop}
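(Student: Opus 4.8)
The plan is to prove the two implications separately, in each case using the local normal form of $f$ in a positively oriented real coordinate together with the fact that, near a point of $X(\R)$, the two sides of $X(\R)$ correspond to the upper and lower half-discs.

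\emph{The forward implication.} Assume $f$ is dividing. A pole of $f$ is a point where $f$ takes the value $\infty \in \pp^1(\R)$, and a zero is a point where $f$ takes the value $0 \in \pp^1(\R)$, so by the dividing property all poles and zeroes of $f$ lie on $X(\R)$. Fix such a point $p$ and a positively oriented real local coordinate $t$ centered at $p$; in this coordinate $f$ becomes a meromorphic function $\phi$ on a disc $D$, real on $D \cap \R$ and hence with real Laurent coefficients at $0$, and with $\operatorname{Im}\phi \geq 0$ on $D \cap \{\operatorname{Im} t > 0\}$ since that half-disc sits inside $X_+$. If $p$ is a zero of order $k$, write $\phi(t) = c t^k + \cdots$ with $c \in \R\setminus\{0\}$; taking $t = \epsilon e^{i\theta}$ gives $\operatorname{Im}\phi(t) \sim c\epsilon^k \sin(k\theta)$, which changes sign on $(0,\pi)$ as soon as $k \geq 2$, contradicting $\operatorname{Im}\phi \geq 0$ there; hence $k = 1$. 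If $p$ is a pole, I would apply the same argument to $-1/\phi$, which is holomorphic near $0$ with a zero there, still has real coefficients, and still maps the upper half-disc into the upper half-plane (since $w \mapsto -1/w$ preserves $\{\operatorname{Im} > 0\}$); thus $-1/\phi$ has a simple zero with positive leading coefficient, which says exactly that $f$ has a simple pole at $p$ with negative residue.

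\emph{The converse.} Assume $X$ is dividing, with $X \setminus X(\R) = X_+ \sqcup X_-$, and that $f$ is a non-constant real meromorphic function all of whose poles are simple, lie on $X(\R)$, and have negative residue with respect to a positively oriented real coordinate. Since $f$ is real, $f(X(\R)) \subseteq \pp^1(\R)$, so it remains to show that $\operatorname{Im} f \neq 0$ off $X(\R)$; in fact I aim for the sharper statement that $u := \operatorname{Im} f > 0$ on $X_+$ and $u < 0$ on $X_-$, which also forces every zero of $f$ onto $X(\R)$. Now $u$ is harmonic on $X_+$ (the poles lie on $X(\R)$), vanishes on $X(\R)$ away from the poles, and near a pole $p$, writing $f = a/t + (\text{holomorphic})$ in a positively oriented coordinate with $a < 0$, one computes $u = (-a)\sin\theta/r + O(1) \to +\infty$ as one approaches $p$ within $X_+$. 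Excising small coordinate half-discs around the poles and applying the minimum principle to $u$ on the resulting compact bordered surface --- whose boundary consists of arcs of $X(\R)$ where $u = 0$ and small half-circles where $u \geq 0$ once $\epsilon$ is small enough --- gives $u \geq 0$ on $X_+$; if $u$ vanished at an interior point it would vanish identically by the strong minimum principle, forcing $f(X) \subseteq \pp^1(\R)$, which is impossible for a non-constant meromorphic function on a compact Riemann surface. Hence $u > 0$ on $X_+$, and the mirror argument (or the relations $f \circ \tau = \bar f$ and $\tau(X_+) = X_-$, giving $\operatorname{Im} f \circ \tau = -\operatorname{Im} f$) yields $u < 0$ on $X_-$.

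I expect the one genuinely delicate point to be the boundary behavior of $u$ on the small half-circles around the poles in the converse: near the two endpoints of such a half-circle, which lie on $X(\R)$, the dominant term $(-a)\sin\theta/r$ degenerates to $0$, so one needs a short local estimate --- using that $\operatorname{Im} f = 0$ on $X(\R)$, so that the holomorphic remainder also has vanishing imaginary part at real points --- to conclude $u \geq 0$ there for $\epsilon$ small. Once this is in place, the rest is the standard maximum-principle argument of Ahlfors, and the forward implication is an elementary local computation.
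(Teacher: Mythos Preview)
Your proof is correct and follows essentially the same route as the paper: the local expansion $f(t)=ct^k+\cdots$ together with the sign of $\sin(k\theta)$ to force $k=1$, the passage to $-1/f$ for poles, and the minimum principle applied to $\operatorname{Im} f$ on $X_+$ for the converse. The only cosmetic difference is that the paper derives the negativity of the residue separately via the limit $\lim_{t\to 0} t f(t)$ along the positive imaginary axis, whereas you fold it into the $-1/f$ argument; your treatment of the boundary behaviour near the half-circle endpoints is more careful than the paper's, which simply asserts positivity near the poles.
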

\begin{proof}
Let $p$ be a zero of $f$, then $p \in X(\R)$. Let $t$ be a real local coordinate centered at $p$. We note that if we have a zero of higher order that $f(t) = a t^k + \ldots$, hence if $t$ is small enough it can not preserve the part of the disc with positive imaginary part, unless $k=1$. Since if $f$ is dividing then so is $-1/f$, hence a similar conclusion applies to poles.

In order to prove the second part of the claim we fix a real positively oriented local coordinate $t$ at a pole, then:
\[
\lim_{t \to -} t f(t) = a.
\]
Since the limit exists in particular the limit exists when we approach $0$ along the positive imaginary axis. Then the imaginary part of $f(t)$ is also positive by assumption and hence the real part of $t f(t)$ is always negative, and hence so is the limit.

Conversely, assume that $f$ is a real meromorphic function on $X$ with simple real poles and negative residues with respect to positive real local coordinate. Then $\operatorname{Im}(f)$ is a harmonic function on $X_{+}$ and it vanishes at every point of the boundary, except for the poles. The above limit argument shows that in fact for a positively oriented coordinate the imaginary part of $f$ is positive on $X_{+}$ near the poles. Therefore, from the minimum and maximum principle for harmonic functions it follows that $\operatorname{Im}(f) > 0$ on $X_{+}$.
\end{proof}

\begin{rem}
Another way to see that the residues are negative is as follows. Let $p$ be a simple pole of $f$ and $t$ again a real positive local coordinate centered at $p$. Write the Laurent expansion of $f$ with respect to $t$: $f(t) = a/t + b + \ldots$. We have that if $\operatorname{Im} t > 0$, then the sign of $\operatorname{Im} (a/t)$ is the opposite of the sign of $a$, so for $t$ of very small modulus we conclude that $a$ has to be negative.
\end{rem}

Let us assume from now on that $X$ is dividing and $X(\R)$ has $k$ components, $X_0,\ldots,X_{k-1}$. We can pullback $\tau$ to an anti-holomorphic involution on $\tilde{X}$, we'll denote the pullback by $\tau$ as well. We recall the construction of a special symmetric basis for the homology of $X$ from \cite{Vin93}. We take a point $s_i$ on $X_i$ and for each $i=1,\ldots,k-1$ we take a path $C_i$ connecting $s_0$ to $s_i$ and containing no other real points. Then we set $A_{g+1-k+i} \sim X_i$ and $B_{g+1-k+i} \sim \pm (C_i - C_i^{\tau})$. Here $\sim$ stands for integral homology we choose the sign in $B_{g+1-k+i}$ so that $\langle A_{g+1-k+i},B_{g+1-k+i} \rangle = 1$, where the pairing is the intersection pairing. Then we complete this to a symmetric homology basis on $X$.

We fix a corresponding basis of holomorphic differentials $\omega_1,\ldots,\omega_g$. Then, as in \cite{Vin93}, we have that $\overline{\tau^* \omega_j} = \omega_j$. Recall from \cite[Ch.\ 3]{Vin93} that the Jacobian variety of $X$ has several real sub-tori, associated each to a different choice of signs $(v_0,\ldots,v_{g-r})$, where $r = g + 1 - k$, defined by:
\begin{multline*}
T_v = \{ \zeta \in J(X) \mid \zeta = \frac{v_1}{2} e_{r+1} + \ldots + \frac{v_{g-r}}{2} e_g + a_1 (\Omega_1 - \frac{e_2}{2}) + a_2 (\Omega_2 - \frac{e_1}{2}) + \ldots + \\ a_{r-1} (\Omega_{r-1} - \frac{e_r}{2}) + a_r (\Omega_r - \frac{e_{r-1}}{2}) + a_{r+1} \Omega_{r+1} + \ldots + a_g \Omega_g\}.
\end{multline*}  
Here the $e_j$ and $\Omega_j$ are columns of the identity matrix and $\Omega$, respectively.

Write $e_1,\ldots,e_g$ for the standard basis of $\Z^g$. Let us fix $\chi \in \T_{v}$, then by our assumption and \cite[Eq,\ 3.12]{Vin93}, we have that:
$$
\varphi(\chi) + \varphi(\chi^{\tau}) = \varphi(\chi) + \overline{\varphi(\chi)} = v_1 e_{r+1} + \ldots + v_{g-r} e_g.
$$
Using this fact we obtain the following lemma about the behavior of $K(\chi,\cdot,\cdot)$.
\begin{lem} \label{lem:K_bar}
For every two distinct points $p,q \in X$, we have that:
$$
\overline{K(\chi,p,q)} = - K(\chi,q^{\tau},p^{\tau}).
$$
\end{lem}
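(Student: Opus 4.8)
The claim $\overline{K(\chi,p,q)} = -K(\chi,q^\tau,p^\tau)$ should follow by writing out the explicit theta-quotient formula for $K$ and tracking how complex conjugation interacts with the involution $\tau$. The plan is to use the formula
\[
K(\chi,p,q) = \frac{\theta{a \brack b}(\varphi(q)-\varphi(p))}{\theta{a \brack b}(0)\, E_\Delta(q,p)},
\]
and to treat the theta-factor and the prime-form factor separately.

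First I would record the basic conjugation identities coming from the symmetric homology basis: since $\overline{\tau^*\omega_j} = \omega_j$, the Abel--Jacobi map satisfies $\overline{\varphi(p)} = \varphi(p^\tau)$ up to the appropriate lattice translation (this is exactly the content of the normalization in \cite{Vin93}), and the period matrix $\Omega$ has the corresponding reality property. Combined with $\varphi(\chi)+\overline{\varphi(\chi)} = v_1 e_{r+1}+\cdots+v_{g-r}e_g$ — the displayed identity just above the lemma — this pins down how the characteristic $[a;b]$ behaves under conjugation: $\overline{\varphi(\chi)} = v - \varphi(\chi)$ with $v$ an integer vector in the relevant slots, so conjugating the characteristic returns (up to integer shifts, which theta absorbs up to an exponential automorphy factor) the characteristic of $\chi$ again, with the sign data $v$ accounting for evenness. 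Then I would compute $\overline{\theta{a \brack b}(\varphi(q)-\varphi(p))}$: conjugate the series termwise, use $\overline{\varphi(q)-\varphi(p)} = \varphi(q^\tau)-\varphi(p^\tau)$, and re-index so that this equals $\theta$ evaluated at $\varphi(q^\tau)-\varphi(p^\tau)$ with the conjugated characteristic; the half-period structure forces the characteristic to be even, which is why $\theta{a \brack b}(0)$ is real and nonzero and why the $\theta$-factors contribute no sign. For the prime form I would invoke its known reality behavior under $\tau$ together with its antisymmetry $E_\Delta(q,p) = -E_\Delta(p,q)$: conjugation sends $E_\Delta(q,p)$ to (a unit times) $E_\Delta(q^\tau,p^\tau)$, and swapping the two arguments to match the form $K(\chi,q^\tau,p^\tau) = \theta(\varphi(p^\tau)-\varphi(q^\tau))/(\theta(0)E_\Delta(p^\tau,q^\tau))$ introduces exactly one minus sign, which is the source of the $-$ in the statement. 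Assembling the two computations, all automorphy/transition factors cancel (as they must, since both sides are sections of the same bundle and the asserted equality is intrinsic), and one is left with $\overline{K(\chi,p,q)} = -K(\chi,q^\tau,p^\tau)$.

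The main obstacle will be bookkeeping the characteristic and the half-integer shifts: one must verify that conjugating $\theta{a\brack b}$ and re-indexing the lattice sum genuinely reproduces the \emph{same} characteristic (so that the identity is between values of one and the same Cauchy kernel), using the constraint $\varphi(\chi)+\overline{\varphi(\chi)} \in \tfrac12\Z^g$-slots and the fact that $h^0(\chi\otimes\Delta)=0$ forces the even-characteristic case. A secondary point to be careful about is the choice of lifts to $\tilde X$ and local frames $\sqrt{dt}$: the cleanest route is to note that both sides transform the same way under changes of lift and frame — precisely because $\tau$ conjugates factors of automorphy — so it suffices to check the identity after fixing any convenient choice, e.g. a real local coordinate at a real point, where the reality of everything in sight makes the sign manifest. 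I expect no genuinely hard analytic input beyond the standard transformation laws for theta functions with characteristics and for the prime form, all of which are available from \cite{Vin93} and \cite{BV-ZPF}.
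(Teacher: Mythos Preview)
Your plan is correct and matches the paper's proof essentially step for step: split $K$ into the theta-quotient and the prime form, use the reality of the Abel--Jacobi map and the $T_v$ condition on $\chi$ to show the theta-part is invariant under $z\mapsto -\bar z$ (the paper packages this as $\overline{G(z)}=G(-\bar z)$ via \cite[Prop.~2.3]{Vin93}), and then get the minus sign from the antisymmetry of the prime form together with $\overline{E(p,q)}=E(p^\tau,q^\tau)$. One small wording caution: the relevant hypothesis is $\chi\in T_v$ (a real-torus condition), not that $\chi$ is a half-period; the nonvanishing of $\theta[{a\atop b}](0)$ comes from $h^0(\chi\otimes\Delta)=0$, not from evenness of the characteristic.
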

\begin{proof}
Recall that we have the following identity for theta functions:
$$
\theta{a \brack b}(z) = e^{2 \pi i \langle z + b +\frac{1}{2} \Omega a, a \rangle} \theta(z + b +\Omega a).
$$
Let us write:
$$
G(z) = \frac{\theta{a \brack b}(z)}{\theta{a \brack b}(0)} = e^{2 \pi i \langle z, a \rangle} \frac{\theta(z + b + \Omega a)}{\theta(b + \Omega a)}.
$$
Then, by \cite[Prop.\ 2.3]{Vin93}, we have that, for real $a$ and $b$, such that $b + \Omega a \in \T_v$:
$$
\overline{G(z)} = e^{- 2 \pi i \langle \bar{z}, a \rangle} \frac{\theta(\bar{z} - b - \Omega a + v_1 e_{r+1} + \ldots + v_{g-r} e_g)}{\theta(-b - \Omega a + v_1 e_{r+1} + \ldots + v_{g-r} e_g)} = G(- \bar{z}).
$$
Now note that we have:
$$
K(\chi,p,q) = \frac{G(\varphi(q) - \varphi(p))}{E_{\Delta}(q,p)}.
$$
Hence, applying the above equality and \cite[Eq.\ 2.12]{Vin93}, we get:
$$
\overline{K(\chi,p,q)} = \frac{G(\varphi(p^{\tau}) - \varphi(q^{\tau}))}{\overline{E_{\Delta}(q,p)}}.
$$
By the fact that the prime form is anti-symmetric, we get that to prove the result we need only to show that:
$$
\overline{E(p,q)} = E(p^{\tau},q^{\tau}).
$$
Now by \cite[Eq.\ 19]{Fay73} we have that:
$$
E(p,q) = \frac{\theta[\varphi(\Delta)](q-p)}{h(p) h(q)}.
$$
Here $h$ is a holomorphic section of $\Delta$, satisfying $h^2(p) = \sum_{j=1}^g \dfrac{\partial \theta[\varphi(\Delta)]}{\partial z_j}(0) \omega_j(p)$. By \cite[Prop.\ 6.11]{Fay73} we have that there exists an open cover of $X$, trivializing $\Delta$, such that $h$ is real and positively oriented,  Now applying again \cite[Prop.\ 2.3]{Vin93} we get that: $\overline{h^2(p)} = c h^2(p^{\tau})$. Therefore we get the desired result. See also \cite[Cor.\ 6.12]{Fay73}.
\end{proof}

The following fact was essentially proved in the proof of \cite[Thm.\ 2.1]{AlpVin02}, we recall the proof to make the exposition more self-contained.
\begin{cor} \label{cor:u_adjointness}
Let $D = \sum_{j=1}^m p_j$ be an effective reduced divisor on $X$, such that $p_j \in X(\R)$ for every $j=1,\ldots,m$. Then if $\chi \in \T_v$, we have that: $\buc(p)^* = - J \bucl(p^{\tau})$, where $J$ is a signature matrix that depends on $v$. In particular if $v = 0$, then $J = I$, the identity matrix.
\end{cor}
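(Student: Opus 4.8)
The plan is to prove the identity entry by entry and then read off the diagonal matrix $J$. Recall that the $j$-th coordinate of $\buc(p)$ is $K(\chi,\tilde p_j,p)/\sqrt{dt_j}(\tilde p_j)$, so the $j$-th coordinate of $\buc(p)^\ast$ is $\overline{K(\chi,\tilde p_j,p)}\,/\,\overline{\sqrt{dt_j}(\tilde p_j)}$. The first step is to apply Lemma \ref{lem:K_bar} with the two arguments of $K$ in the order they occur here, giving $\overline{K(\chi,\tilde p_j,p)}=-K(\chi,p^\tau,\tilde p_j^{\,\tau})$ (with compatible lifts of $p$ and $\tilde p_j$). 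Since $p_j\in X(\R)$ we have $p_j^\tau=p_j$, so $\tilde p_j^{\,\tau}$ is again a lift of $p_j$; choosing a real local coordinate $t_j$ at $p_j$ (so that $t_j\circ\tau=\overline{t_j}$ and $\tau$ preserves a neighbourhood of $p_j$), the conjugate frame $\overline{\sqrt{dt_j}(\tilde p_j)}$ may be compared with $\sqrt{dt_j}(\tilde p_j^{\,\tau})$ using the real structure on $\Delta$ already exploited in the proof of Lemma \ref{lem:K_bar} (the relation $\overline{E(p,q)}=E(p^\tau,q^\tau)$ and the real, positively oriented section $h$ of $\Delta$). Putting these together, the $j$-th coordinate of $\buc(p)^\ast$ equals $-\epsilon_j\,K(\chi,p^\tau,\tilde p_j)/\sqrt{dt_j}(\tilde p_j)=-\epsilon_j\,(\bucl(p^\tau))_j$ for some nonzero constant $\epsilon_j$, which is exactly the asserted matrix identity with $J=\operatorname{diag}(\epsilon_1,\dots,\epsilon_m)$.

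It then remains to pin down the constants $\epsilon_j$. Each $\epsilon_j$ is the product of two contributions: the passage from the lift $\tilde p_j^{\,\tau}$ back to $\tilde p_j$, which, writing $\tilde p_j^{\,\tau}=T_j\tilde p_j$ for the unique $T_j\in\pi_1(X)$, contributes the constant factor of automorphy $a_\chi(T_j)$; and the conjugation of the half-order differential frame $\sqrt{dt_j}$ at the real point $p_j$. The hypothesis $\chi\in\T_v$ is precisely what forces this product to be real of modulus one, i.e.\ $\pm1$: this is the computation underlying Lemma \ref{lem:K_bar}, via \cite[Prop.\ 2.3]{Vin93} applied to the loop $T_j$. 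For $p_j$ lying on the component $X_i$ of $X(\R)$ the relevant loop $T_j$ is governed by the $A$-cycle $A_{r+i}$ (and is trivial for $X_0$), so $\epsilon_j$ is $(-1)^{v_i}$, respectively $1$, and thus depends only on $v$ and on the combinatorial datum of which component contains each $p_j$; when $v=0$ all the factors of automorphy of $\chi$ on the real $A$-cycles are $+1$, so $\epsilon_j=1$ for all $j$ and $J=I$. This reproduces the argument in the proof of \cite[Thm.\ 2.1]{AlpVin02}.

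The main obstacle is exactly the bookkeeping in the second paragraph: one must identify the loop $T_j$ satisfying $\tilde p_j^{\,\tau}=T_j\tilde p_j$ in terms of the symmetric homology basis, track the constant factors of automorphy of $\chi\in\T_v$ along that loop, and combine them with the transformation of the frame $\sqrt{dt_j}$ of $\Delta$ under $\tau$ at a real point — checking in particular that, for a suitably normalized choice of lifts and real coordinates, the resulting constants assemble into a genuine signature matrix depending only on $v$. Everything else is a direct substitution into the definitions of $\buc$, $\bucl$ and the statement of Lemma \ref{lem:K_bar}.
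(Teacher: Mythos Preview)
Your overall strategy is exactly the paper's: compute $\buc(p)^*$ entry by entry, apply Lemma \ref{lem:K_bar} to get $-K(\chi,p^\tau,\tilde p_j^{\,\tau})$, write $\tilde p_j^{\,\tau}=T_j\tilde p_j$ for some $T_j\in\pi_1(X)$, and read off $\epsilon_j=a_\chi(T_j)$. The extra care you take with the conjugation of the frame $\sqrt{dt_j}$ is not in the paper's proof; the paper tacitly uses that with a real local coordinate $t_j$ at the real point $p_j$ (and the real structure on $\Delta$ coming from Fay's section $h$) the frame is real, so that contribution is trivial. Flagging it is reasonable, but it is not a second genuine source of sign.

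There is, however, a concrete mistake in your identification of the homology class of $T_j$. In the symmetric basis the $A$-cycles $A_{r+i}$ are the real components $X_i$ themselves, whereas $B_{r+i}\sim\pm(C_i-C_i^\tau)$ with $C_i$ a path from $s_0\in X_0$ to $s_i\in X_i$. For $p_j\in X_s$ with $s\ge 1$, choose the lift $\tilde p_j$ via a path that runs inside $X_0$, then along $C_s$, then inside $X_s$; the pieces inside $X_0$ and $X_s$ are $\tau$-fixed, so the loop $T_j$ is homologous to $\pm(C_s^\tau-C_s)$, i.e.\ to the $B$-cycle $B_{r+s}$, not to $A_{r+s}$. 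Consequently it is the $B$-cycle factor of automorphy that enters: $a_\chi(B_{r+s})=e^{2\pi i b_{r+s}}$, and for $\chi\in\T_v$ one has $b_{r+s}=v_s/2$ (cf.\ \cite[Eq.\ 3.9]{Vin93}), giving $\epsilon_j=(-1)^{v_s}$. Your claim that ``the factors of automorphy of $\chi$ on the real $A$-cycles'' determine $\epsilon_j$ is wrong as stated: the $A$-cycle factors depend on the free real parameters $a_{r+1},\dots,a_g$ in the description of $\T_v$ and are not $\pm1$ in general. The final formula $\epsilon_j=(-1)^{v_s}$ and $J=I$ for $v=0$ is correct, but the route you give to it does not establish it; replace $A_{r+i}$ by $B_{r+i}$ and the argument goes through exactly as in the paper.
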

\begin{proof}
Note that $\buc(p)^* = \begin{pmatrix}
\frac{\overline{K(\chi,\tilde{p}_1,p)}}{\sqrt{dt_1}(\tilde{p_1}}, \ldots,\frac{\overline{K(\chi,\tilde{p}_m,p)}}{\sqrt{dt_m}(\tilde{p_m}}
\end{pmatrix}$. Now applying Lemma \ref{lem:K_bar} we get that:
\[
\frac{\overline{K(\chi,\tilde{p}_j,p)}}{\sqrt{dt_j}(\tilde{p_j})} == - \frac{K(\chi,p^{\tau},\tilde{p}_j^{\tau})}{\sqrt{dt_j}(\tilde{p_j}} = - \frac{K(\chi,p^{\tau},T_j \tilde{p}_j)}{\sqrt{dt_j}(\tilde{p}_j}= - a_{\chi}(T_j) \frac{K(\chi,p^{\tau},\tilde{p}_j)}{\sqrt{dt_j}(\tilde{p}_j}.
\]
Here $T_j \in \pi_1(X)$ that maps $\tilde{p}_j$ to $\tilde{p}^{\tau}_j$. Assume that $p_j \in X_s$, where $X_s$ is some components of $X(\R)$. Then by the definition of the symmetric basis in $H_1(X,\Z)$ we have that if $s=0$, then $T_j \sim 0$ or $T_j \sim B_{g+1-k+s}$ if $s = 1,\ldots,k-1$. Since $a_{\chi}$ is a unitary character it factors through $H_1(X,\Z)$. So either $a_{\chi}(T_j) = 1$, if $s=0$ or $a_{\chi}(T_j) = e^{2 \pi i b_{g+k-1+s}}$. Now by \cite[Eq.\ 3.9]{Vin93} we have that $b_{g+1-k+s} = v_{s}/2$ and we are done.
\end{proof}

\begin{prop} \label{prop:bezoutian_hermitian}
Assume that $\chi \in \T_v$ and let $D = \sum_{j=1}^m p_j$ be an effective reduced divisor with all $p_j \in X(\R)$. Let $f,g \in \cL(D)$ be real. Then under our assumptions, $B_{\chi,D}(f,g)$ is $J$-Hermitian, where $J$ is a signature matrix that depends on $v$ obtained above.
\end{prop}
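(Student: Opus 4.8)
The plan is to derive the required symmetry from the fundamental identity \eqref{eq:bezoutian_fundamental}, combined with the conjugation rule for the Cauchy kernel (Lemma \ref{lem:K_bar}) and the adjointness of the vectors $\buc$, $\bucl$ (Corollary \ref{cor:u_adjointness}). Recall that a signature matrix satisfies $J=J^*=J^{-1}$ and is diagonal, its entries depending only on which component of $X(\R)$ each $p_j$ lies on; saying that $B_{\chi,D}(f,g)$ is $J$-Hermitian means $J B_{\chi,D}(f,g)^* J = B_{\chi,D}(f,g)$, which for $v=0$ (so $J=I$) is ordinary Hermitian symmetry.

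First I would reduce to the case where $D$ is the polar divisor of a real function. By the construction in the proof of Proposition \ref{prop:bezoutians_formula}, every row and column of $B_{\chi,D}(f,g)$ indexed by a point $p_j$ that is a pole of neither $f$ nor $g$ vanishes, and since $J$ is diagonal it suffices to prove the claim for the sub-divisor $D'$ of points that actually occur as poles of $f$ or $g$. For generic real $c$ the function $h=f+cg\in\cL(D')$ is real and has $D'$ as its exact polar divisor, so Corollary \ref{cor:u_independence_divisor} becomes applicable; hence I may and do assume that $D$ itself is the polar divisor of a real $h\in\cL(D)$.

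Next I would carry out the core computation. Since $f$ and $g$ are real, $\overline{f(p)}=f(p^\tau)$ and $\overline{g(p)}=g(p^\tau)$, so
\begin{equation*}
\overline{f(p)g(q)-f(q)g(p)} = f(p^\tau)g(q^\tau)-f(q^\tau)g(p^\tau) = -\bigl(f(q^\tau)g(p^\tau)-f(p^\tau)g(q^\tau)\bigr);
\end{equation*}
together with $\overline{K(\chi,p,q)}=-K(\chi,q^\tau,p^\tau)$ from Lemma \ref{lem:K_bar}, this yields, for $p\ne q$,
\begin{equation*}
\overline{b_\chi(f,g)(p,q)} = b_\chi(f,g)(q^\tau,p^\tau).
\end{equation*}
Now I would rewrite both sides with \eqref{eq:bezoutian_fundamental}: the right-hand side is $\bucl(q^\tau)\,B_{\chi,D}(f,g)\,\buc(p^\tau)$, while the left-hand side, after transposing the scalar $\bucl(p)\,B_{\chi,D}(f,g)\,\buc(q)$ and conjugating, equals $\buc(q)^*\,B_{\chi,D}(f,g)^*\,\bucl(p)^*$. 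Applying Corollary \ref{cor:u_adjointness} to convert $\buc(q)^*$ and $\bucl(p)^*$ into $\bucl(q^\tau)$ and $\buc(p^\tau)$, at the cost of one factor of $J$ on each side of $B_{\chi,D}(f,g)^*$ (the two sign changes cancelling), this becomes $\bucl(q^\tau)\,J B_{\chi,D}(f,g)^* J\,\buc(p^\tau)$. Comparing,
\begin{equation*}
\bucl(q^\tau)\bigl(J B_{\chi,D}(f,g)^* J - B_{\chi,D}(f,g)\bigr)\buc(p^\tau) = 0
\end{equation*}
for all $p\ne q$ in $X\setminus\operatorname{supp} D$.

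Finally, I would strip off the vectors. By Corollary \ref{cor:u_independence_divisor}, for a value $z$ over which $h$ is unramified the $m$ vectors $\buc$ at the points of $h^{-1}(z)$ form a basis of $\C^m$ and the corresponding $\bucl$ form the dual basis; since $\operatorname{supp} D$ is $\tau$-invariant (all $p_j$ are real), the same is true after applying $\tau$, so one may choose the points $p,q$ above so that $\buc(p^\tau)$ runs over a basis of $\C^m$ and $\bucl(q^\tau)$ over a dual basis. The displayed identity then forces $J B_{\chi,D}(f,g)^* J = B_{\chi,D}(f,g)$, i.e. $B_{\chi,D}(f,g)$ is $J$-Hermitian (and Hermitian when $v=0$). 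I expect the only real obstacle to be bookkeeping — keeping the signs, the transposes and the placement of $J$ consistent when passing through Corollary \ref{cor:u_adjointness} and \eqref{eq:bezoutian_fundamental} — together with the routine reduction of the second paragraph needed to make Corollary \ref{cor:u_independence_divisor} available.
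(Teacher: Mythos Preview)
Your argument is correct and rests on the same identity the paper uses, namely $\overline{b_\chi(f,g)(p,q)}=b_\chi(f,g)(q^\tau,p^\tau)$, obtained from the reality of $f,g$ together with Lemma~\ref{lem:K_bar}. The difference is only in how the matrix identity is extracted from this. The paper simply takes $p,q\in X(\R)$ (so $p^\tau=p$, $q^\tau=q$), expands both sides via \eqref{eq:bezoutian_fundamental}, and compares coefficients of $K(\chi,\tilde p_i,p)K(\chi,q,\tilde p_j)$ directly using the residue/principal-parts argument already implicit in Proposition~\ref{prop:bezoutians_formula}; this yields $b_{ij}=a_\chi(T_i)a_\chi(T_j)\overline{b_{ji}}$, i.e.\ $B=JB^*J$, with no need for a reduction step or Corollary~\ref{cor:u_independence_divisor}. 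Your route---working at general $p,q$, passing through Corollary~\ref{cor:u_adjointness}, and then stripping the vectors via Corollary~\ref{cor:u_independence_divisor} after reducing to $D=(h)_\infty$---is a perfectly valid alternative, but it is longer precisely because the reduction of your second paragraph is required only to make Corollary~\ref{cor:u_independence_divisor} available, whereas the paper's residue comparison bypasses that entirely. One small point: Corollary~\ref{cor:u_adjointness} as stated gives $\buc(p)^*=-J\bucl(p^\tau)$; you also use the companion relation $\bucl(p)^*=-J\buc(p^\tau)$, which follows by the same computation (or by taking adjoints), but you should say so.
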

\begin{proof}
Fix two distinct points $p,q \in X(\R)$ not on $D$, then by Proposition \ref{lem:K_bar} we have that:
$$
\overline{b_{\chi}(f,g)(p,q)} = b_{\chi}(f,g)(q,p).
$$
Now applying Proposition \ref{prop:bezoutians_formula} and Lemma \ref{lem:K_bar}, we get that:
$$
\sum_{i,j =1}^m \overline{b_{ij}} \frac{\overline{K(\chi,p,\tilde{p}_i)} \overline{K(\chi,\tilde{p}_j,q)}}{\overline{\sqrt{dt_i}(\tilde{p}_i)\sqrt{dt_j}(\tilde{p}_j)}} = \sum_{i,j =1}^m \overline{b_{ij}} a_{\chi}(T_i) a_{\chi}(T_j)\frac{K(\chi,\tilde{p}_i,p) K(\chi,q,\tilde{p}_j)}{\sqrt{dt_i}(\tilde{p}_i)\sqrt{dt_j}(\tilde{p}_j)}.
$$
Hence comparing coefficients we get that, $b_{ij} = a_{\chi}(T_i) a_{\chi}(T_j) \overline{b_{ji}}$. Conclude that:
\[
B_{\chi,D}(f,g) = J B_{\chi,D}(f,g)^* J.
\]
\end{proof}

Assume that $\chi \in \T_0$ and let $D$ be a divisor as in Proposition \ref{prop:bezoutian_hermitian} above. Let us assume that we have two functions $f,g \in \cL(D)$ are real and $f/g$ is dividing. Let $(f)_{\infty}$ and $(g)_{\infty}$ be the divisors of poles of $f$ and $g$, respectively. Let us assume that $D = (f)_{\infty} \vee (g)_{\infty}$ the supremum of divisors of $f$ and $g$. We can replace both $f$ and $g$ by real linear combinations so that $D= (f)_{\infty} = (g)_{\infty}$. If we have a matrix $\left(\begin{smallmatrix} a & b \\ c & d \end{smallmatrix}\right) \in \SL_2(\R)$, then $h = \frac{ a f + b g}{c f + d g} = \frac{ a + b (f/g)}{c + d (f/g)}$. Hence for every $p \in X$ not a zero of $g$ if $h(p) \in \R$, then $(f/g)(p)$ in $\R$ and hence $p \in X(\R)$, so $h$ is dividing as well.

The poles of $f/g$ are thus at real zeroes of $g$. Now if $p$ is a complex zero of $g$ then $f$ also has a zero at $p$ and thus $B_{\chi,D}(f,g) \buc(\tilde{p}) = 0$ by Corollary \ref{cor:common_zero}. If $p$ is a real zero of $g$, then either it is also a zero of $f$ or it is a pole of $f/g$. In the first case we apply Corollary \ref{cor:common_zero} again to get that $B_{\chi,D}(f,g) \buc(\tilde{p}) = 0$ as well. In the second case we fix a real positive coordinate $t$ centered at $p$ and applying Proposition \ref{prop:bezoutians_formula} we get:
\[
\bucl(\tilde{p}) B_{\chi,D}(f,g) \buc(\tilde{p}) = f(p) g^{\prime}(p).
\]
Note that in this case the zero of $g$ has to be simple, since every pole of $f/g$ is simple. Using Corollary \ref{cor:u_adjointness} and the fact that $\chi \in \T_0$ we conclude that:
\[
\langle B_{\chi,D}(f,g) \buc(\tilde{p}), \buc(\tilde{p}) \rangle = - f(p) g^{\prime}(p).
\]
Note that the residue of $f/g$ at $p$ is $f(p)/g^{\prime}(p) < 0$ and deduce that $- f(p) g^{\prime}(p) > 0$. This leads us to the following proposition.

\begin{prop} \label{prop:bezout_definite}
Assume that $\chi \in \T_0$ and let $D$ be a divisor as in Proposition \ref{prop:bezoutian_hermitian} above and assume that $f,g \in \cL(D)$ are real and that $f/g$ is dividing. If $B_{\chi,D}(f,g)$ is invertible then $B_{\chi,D}(f,g) \geq 0$. 
\end{prop}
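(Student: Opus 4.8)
The plan is to diagonalize the Hermitian form $v \mapsto v^{*} B v$ on $\C^{m}$, where $B := B_{\chi,D}(f,g)$, in the explicit basis of Cauchy vectors attached to the zeros of $g$. By Proposition \ref{prop:bezoutian_hermitian} (with $\chi \in \T_{0}$, so $J = I$) the matrix $B$ is Hermitian, and by hypothesis invertible; in particular $B$ has no zero row, so $D = (f)_{\infty} \vee (g)_{\infty}$. Replacing the pair $(f,g)$ by two generic real linear combinations each having pole divisor exactly $D$ --- a move that replaces $f/g$ by a real M\"obius transformation of it, hence keeps it dividing, and multiplies $B$ by the (positive, after a sign choice) determinant of the transformation --- we may assume $(f)_{\infty} = (g)_{\infty} = D$; in particular $g$ has degree $m$.

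Next I would pin down the zeros of $g$. Put $\phi := f/g$, a dividing function, so by Proposition \ref{prop:dividing_func} its poles are simple and real, and they are precisely the zeros of $g$ at which $f$ does not vanish. If $f$ and $g$ had a common zero $p$, then Corollary \ref{cor:common_zero} would give $B\,\buc(\tilde p) = 0$ with $\buc(\tilde p) \neq 0$, contradicting invertibility of $B$. Hence $f$ and $g$ have no common zero, so the $m$ zeros $q_{1}, \dots, q_{m}$ of $g$ are distinct, simple, real, and $f(q_{i}) \neq 0$ for all $i$.

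It remains to diagonalize. Since $g$ is unramified over $0$ and $g^{-1}(0) = \{q_{1}, \dots, q_{m}\}$, Corollary \ref{cor:u_independence_divisor} shows that $\buc(\tilde q_{1}), \dots, \buc(\tilde q_{m})$ form a basis of $\C^{m}$. For $i \neq j$, Corollary \ref{cor:u_adjointness} (with $J = I$, $q_{j}^{\tau} = q_{j}$) and then Proposition \ref{prop:bezoutians_formula} give
\[
\buc(\tilde q_{j})^{*} B\, \buc(\tilde q_{i}) = -\,\bucl(\tilde q_{j})\, B\, \buc(\tilde q_{i}) = -\bigl( f(q_{j}) g(q_{i}) - f(q_{i}) g(q_{j}) \bigr) K(\chi, q_{j}, q_{i}) = 0,
\]
because $g(q_{i}) = g(q_{j}) = 0$; thus the basis is $B$-orthogonal. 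On the diagonal, \eqref{eq:bezoutian_fundamental_lim} with $g(q_{i}) = 0$ gives $\bucl(\tilde q_{i})\, B\, \buc(\tilde q_{i}) = f(q_{i}) g'(q_{i})$, hence $\buc(\tilde q_{i})^{*} B\, \buc(\tilde q_{i}) = -f(q_{i}) g'(q_{i})$; and this is positive because the residue of $\phi$ at the simple pole $q_{i}$, namely $f(q_{i})/g'(q_{i})$ computed in a positively oriented real coordinate, is negative by Proposition \ref{prop:dividing_func}. Therefore, writing $v = \sum_{i} \alpha_{i} \buc(\tilde q_{i})$,
\[
v^{*} B v = \sum_{i=1}^{m} |\alpha_{i}|^{2} \bigl( -f(q_{i}) g'(q_{i}) \bigr) \ge 0,
\]
so $B \ge 0$ --- in fact $B > 0$, which is equivalent here since $B$ is invertible.

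The step I expect to be most delicate is the reduction to $(g)_{\infty} = D$ together with the deduction that invertibility of $B$ forces the zero loci of $f$ and $g$ to be disjoint: this uses the nonvanishing of $\buc(\tilde p)$ for $p \notin D$ (so that a common zero would yield a genuine element of $\ker B$) and then the applicability of Corollary \ref{cor:u_independence_divisor} at $z = 0$ once $g$ is known to have $m$ distinct simple zeros. Everything downstream is a direct bookkeeping of the Cauchy-kernel identities of Sections \ref{sec:bezoutian}--\ref{sec:div_func}.
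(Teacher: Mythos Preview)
Your argument is correct and follows the same route as the paper's proof: reduce to $D=(f)_\infty \vee (g)_\infty$ by invertibility, then to $(f)_\infty=(g)_\infty=D$ by a real $2\times 2$ change, use the common-zero kernel argument to force the zeros of $g$ to be $m$ simple real points, and diagonalize the Hermitian form $B$ in the Cauchy basis $\buc(\tilde q_i)$ with positive diagonal entries coming from the negative residues of $f/g$. The paper carries out the $2\times 2$ reduction with an $\SL_2(\R)$ matrix (so $B$ is literally unchanged rather than scaled by a positive constant) and cites \cite[Prop.~2.2.3]{GLR} for the last linear-algebra step that you write out explicitly, but these are cosmetic differences.
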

\begin{proof}
 We first reduce to the case that $D = (f)_{\infty} = (g)_{\infty}$. We know that $D \geq (f)_{\infty} \vee (g)_{\infty}$ and for every point $p_j$ that is neither a pole of $f$ nor $g$, the $j$-th column and row of $B_{\chi,D}(f,g)$ are zero and this contradicts our assumption.

So as in the preceding discussion we can assume that $(f)_{\infty} = (g)_{\infty} = D$. Since $B_{\chi,D}(f,g)$ is invertible, we get that all the zeroes of $g$ are simple and distinct from the zeroes of $f$. Let $q_1,\ldots,q_m$ be the zeroes of $g$ and fix a lift $\tilde{q}_j \in \tilde{X}$. By Corollary \ref{cor:u_independence_divisor} we know that $\buc(\tilde{q_j})$ are linearly independent and by the discussion above they are orthogonal with respect to the bilinear form defined by $B_{\chi,D}(f,g)$. Furthermore the discussion above combined with \cite[Prop.\ 2.2.3]{GLR} gives us that $B_{\chi,D}(f,g)$ is positive definite.
\end{proof}

\begin{rem}
In fact the assumption of $B_{\chi,D}(f,g)$ being invertible can be relaxed, if we assume that $g$ has simple zeroes and $(f/g)$ is dividing, it will still follow that $B_{\chi,D}(f,g) \geq 0$. Indeed the assumptions imply that we are allowing $f$ and $g$ to have common zeroes. The vectors $\buc(\tilde{q}_j)$ are still linearly independent, however some them may be isotropic vectors of $B_{\chi,D}(f,g)$. Looking at those vectors that are not isotropic, we can still deduce that $B_{\chi,D}(f,g) \geq 0$.
\end{rem}

\section{Livsic-type Determinantal Representations of Curves} \label{sec:det_rep_curve}

We shall first fix some notations to be used constantly from now on. Let $C \hookrightarrow \pp^d$ be a projective curve of degree $n$ not contained in any hypersurface. Let $X$ be the normalizing Riemann surface of $C$. Let $\iota \colon X \to \pp^d$ be the composition of the normalization map with the embedding of $C$. Let us assume that $C$ intersects the hyperplane at infinity at $n$ distinct non-singular points. Otherwise we apply a linear transformation to achieve it. Let $\cL = \iota^* \cO(1)$ be a line bundle on $X$. Then we have global sections $\mu_0,\ldots,\mu_d \in H^0(X,\cL)$, such that $\iota(p) = (\mu_0(p):\cdots : \mu_d(p))$. We denote  $\lambda_j = \mu_j/\mu_0$, for $j=1,\ldots,d$ and set $\lambda_0 = 1$. Again applying a linear transformation if necessary we may assume that $\mu_1$ and $\mu_0$, have no common zeroes.

Fix a flat unitary line bundle $\chi$ on $X$ and a line bundle of half-order differentials, $\Delta$. We define a tensor $\gamma \in \wedge^2 \C^{d+1} \otimes M_n(\C)$ by setting $\gamma_{ij} = B_{\chi,D}(\lambda_i,\lambda_j)$, where $D$ is the divisor of zeroes of $\mu_0$. Note that by assumption the zeroes of $\mu_0$ are simple and hence for every $j=0,\ldots,d$ we have that $\lambda_j \cL(D)$. Furthermore the divisor $D$ is the divisor of poles of $\lambda_1$. In particular if $e_0,\ldots,e_d$ are the standard basis of $\C^{d+1}$, then $\gamma = \sum_{0 \leq i < j \leq d} \gamma_{ij} \otimes e_i \wedge e_j$.

Let $V \subset \pp^d$ be a linear subspace of dimension $d-2$. Writing out $\gamma(V)$, we get:
$$
\gamma(V) = \sum_{0 \leq i < j \leq d} (a_{i0} a_{j1} - a_{j0} a_{i1}) \gamma_{ij}. 
$$
By the properties of the Bezoutian, we get that:
$$
\gamma(V) = \sum_{0 \leq i < j \leq d} B( a_{i0} \lambda_i + a_{j0} \lambda_j, a_{i1} \lambda_i + a_{j1} \lambda_j).
$$
Now rearranging the terms and using the linearity and the fact that $B(f,f) = 0$, for every meromorphic function $f$, one gets that:
$$
\gamma(V) = B(\sum_{i=0}^d a_{i0} \lambda_i, \sum_{j=0}^d a_{j1} \lambda_j).
$$
So we get the following:
\begin{lem} \label{lem:bezoutian}
Let $C$, $X$ and $V$ as above, then there exist linear combinations of the $\lambda_j$, namely $\kappa_0 = \sum_{i=0}^d a_{i0} \lambda_i$ and $\kappa_1 =\sum_{j=0}^d a_{j1} \lambda_j$, such that:
$$
\gamma(V) = B(\kappa_0,\kappa_1).
$$
\end{lem}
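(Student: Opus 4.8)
The plan is to unwind the definition of $\gamma(V)$ via \eqref{eq:gamma_v_coord} and then rearrange the resulting sum of Bezoutians using nothing more than bilinearity and the alternating property of $B_{\chi,D}$; this is essentially the computation displayed just before the statement, which I would organize into the following steps.

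First I would pin down which coordinates enter. Since $C$ is a curve we are in the case $k=1$, so $\gamma\in\wedge^2\C^{d+1}\otimes M_n(\C)$ and $V$ is a $(d-2)$-plane, i.e.\ the projectivization of a $(d-1)$-dimensional linear subspace of $\C^{d+1}$, whose orthogonal complement $V^{\perp}$ is $2$-dimensional. Write $V^{\perp}=\Span\{w_0,w_1\}$ with $w_k=\sum_{i=0}^d a_{ik}e_i$, so that $V$ is cut out by the two linear equations $\langle\mu,w_0\rangle=\langle\mu,w_1\rangle=0$. By \eqref{eq:gamma_v_coord} one has $\gamma(V)=\sum_{|I|=2}(-1)^{\sigma(I)}p(V)_{I^c}\,\gamma_I$, and by the Plücker duality between $V$ and $V^{\perp}$ (the maximal minors of a matrix spanning one are, up to sign, the complementary maximal minors of a matrix spanning the other), matched against the chosen orientation of $\wedge^{d+1}\C^{d+1}$, one gets $(-1)^{\sigma(I)}p(V)_{I^c}=a_{i0}a_{j1}-a_{j0}a_{i1}$ for $I=\{i,j\}$ with $i<j$. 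Hence, using $\gamma_{ij}=B_{\chi,D}(\lambda_i,\lambda_j)$,
\[
\gamma(V)=\sum_{0\le i<j\le d}(a_{i0}a_{j1}-a_{j0}a_{i1})\,B_{\chi,D}(\lambda_i,\lambda_j).
\]

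Next I would put $\kappa_0=\sum_{i=0}^d a_{i0}\lambda_i$ and $\kappa_1=\sum_{j=0}^d a_{j1}\lambda_j$ and compute $B_{\chi,D}(\kappa_0,\kappa_1)$ directly. Bilinearity of the Bezoutian gives $B_{\chi,D}(\kappa_0,\kappa_1)=\sum_{i,j=0}^d a_{i0}a_{j1}B_{\chi,D}(\lambda_i,\lambda_j)$; the diagonal terms vanish because $B_{\chi,D}(\lambda_i,\lambda_i)=0$, and grouping the terms indexed by $(i,j)$ and $(j,i)$ while using $B_{\chi,D}(\lambda_j,\lambda_i)=-B_{\chi,D}(\lambda_i,\lambda_j)$ collapses the double sum to $\sum_{i<j}(a_{i0}a_{j1}-a_{j0}a_{i1})B_{\chi,D}(\lambda_i,\lambda_j)$, which is exactly the expression for $\gamma(V)$ found above. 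This yields $\gamma(V)=B_{\chi,D}(\kappa_0,\kappa_1)$. Note that no genericity of $V$ is needed: this is a polynomial identity in the Plücker coordinates of $V$.

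The only delicate point is the sign bookkeeping in the first step — checking that the orientation conventions implicit in \eqref{eq:gamma_v_coord} together with the scalar-product identification of $\C^{d+1}$ with its dual conspire so that $(-1)^{\sigma(I)}p(V)_{I^c}$ is literally the $2\times 2$ minor $a_{i0}a_{j1}-a_{j0}a_{i1}$ and not its negative. This is the standard complementary-minors (Hodge star) identity and is routine; everything after it is formal manipulation of a bilinear alternating form.
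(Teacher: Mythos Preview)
Your proposal is correct and follows essentially the same route as the paper: expand $\gamma(V)$ as $\sum_{i<j}(a_{i0}a_{j1}-a_{j0}a_{i1})\gamma_{ij}$ and then use bilinearity and the alternating property of the Bezoutian to identify this with $B_{\chi,D}(\kappa_0,\kappa_1)$. If anything you are more explicit than the paper, which introduces the coefficients $a_{ik}$ without saying where they come from; your identification of them via a basis $w_0,w_1$ of $V^\perp$ and Pl\"ucker duality fills that gap cleanly.
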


The main result of this section is the following theorem:
\begin{thm} \label{thm:vr_rep_curves}
The curve $C$ admits a \vr{} Livsic-type determinantal representation $\gamma$.
\end{thm}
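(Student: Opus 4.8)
The plan is to verify that the tensor $\gamma$ with $\gamma_{ij}=B_{\chi,D}(\lambda_i,\lambda_j)$ satisfies the three requirements of a \vr{} Livsic-type determinantal representation of $C$: that $\gamma$ is non-degenerate, that $D(\gamma)=C$, and that $\deg(\gamma)=n$. The main inputs will be Lemma \ref{lem:bezoutian} (which writes $\gamma(V)$ as a single Bezoutian), Corollary \ref{cor:u_independence_divisor} (independence and duality of the $\buc(\tilde q_j)$ over a generic fibre), Corollary \ref{cor:det_vanish} (the three-term relation annihilating $\buc(\tilde p)$), Corollary \ref{cor:degree-bound} ($\deg(\gamma)\le n$), and Proposition \ref{prop:pure_dim} (pure-dimensionality of $D(\gamma)$ for \vr{} $\gamma$).

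First I would establish non-degeneracy. By Lemma \ref{lem:bezoutian}, for the $(d-2)$-plane $V_0$ whose associated linear combinations are $\kappa_0=\lambda_0=1$ and $\kappa_1=\lambda_1$ we have $\gamma(V_0)=B_{\chi,D}(1,\lambda_1)$. Since $D$ is exactly the divisor of poles of $\lambda_1$, Corollary \ref{cor:u_independence_divisor}, applied to a generic $z\in\C$ over which $\lambda_1$ is unramified, produces a fibre $\{q_1,\dots,q_n\}$ for which the $\buc(\tilde q_j)$ and the $\bucl(\tilde q_j)$ are dual bases of $\C^n$ with respect to the pairing defined by $B_{\chi,D}(1,\lambda_1)$; in particular that matrix is invertible, so $\gamma$ is non-degenerate, $\dim D(\gamma)\le 1$, and $\deg(\gamma)\le n$ by Corollary \ref{cor:degree-bound}.

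Next I would prove $C\subseteq D(\gamma)$. For $p\in X$ with $\mu_0(p)\ne0$, write $\iota(p)=(1:\lambda_1(p):\cdots:\lambda_d(p))$ and note $\gamma_{0i}=B_{\chi,D}(1,\lambda_i)$. Corollary \ref{cor:det_vanish} with $(f,g)=(\lambda_i,\lambda_j)$ gives
\[
\bigl(\lambda_j(p)\gamma_{0i}-\lambda_i(p)\gamma_{0j}+\gamma_{ij}\bigr)\buc(\tilde p)=0,
\]
which is precisely $(\gamma\wedge\iota(p))_J\,\buc(\tilde p)=0$ for every index triple $J$ containing $0$. Substituting $\gamma_{jk}\buc(\tilde p)=(\lambda_j(p)\gamma_{0k}-\lambda_k(p)\gamma_{0j})\buc(\tilde p)$ (and cyclically) into $\lambda_i(p)\gamma_{jk}-\lambda_j(p)\gamma_{ik}+\lambda_k(p)\gamma_{ij}$, all terms cancel, so the same vanishing holds for every triple $J$ with $0\notin J$; hence $(\gamma\wedge\iota(p))\buc(\tilde p)=0$. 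Each entry of $\buc$ is a non-zero meromorphic section, so $\buc(\tilde p)\ne0$ for all but finitely many $p$, giving $\iota(p)\in D(\gamma)$ on a dense subset of $C$; since $D(\gamma)$ is closed, $C\subseteq D(\gamma)$.

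Finally I would close the argument by a degree count. As $\dim D(\gamma)\le1$ and $\dim C=1$, every irreducible component of $C$ is an irreducible component of $D(\gamma)$, and on each the kernel sheaf $\cK$ has generic fibre dimension $\ge1$ (it contains $\buc(\tilde p)$). Hence $\deg(\gamma)=\int_{\pp^d}[Z(\gamma)][L_{d-1}]\ge\deg C=n$, and together with $\deg(\gamma)\le n$ this forces $\deg(\gamma)=n$, so $\gamma$ is \vr{}. Then Proposition \ref{prop:pure_dim} gives that $D(\gamma)$ is of pure dimension $1$, while the components of $C$ already account for the full degree $n$ with kernel multiplicity $1$ each; so $D(\gamma)$ has no further component, i.e. $D(\gamma)=C$. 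The step I expect to require the most care is $C\subseteq D(\gamma)$ — specifically upgrading the three-term identities of Corollary \ref{cor:det_vanish} (which only directly control triples through the index $0$) to the annihilation of all of $\wedge^{2}\C^{d+1}$-worth of equations on $\buc(\tilde p)$, together with controlling where $\buc$ vanishes; once this is in place the degree bookkeeping and Proposition \ref{prop:pure_dim} are routine.
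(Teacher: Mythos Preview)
Your proposal is correct and follows essentially the same route as the paper: you use Corollary~\ref{cor:det_vanish} to get $(\gamma\wedge\iota(p))_J\buc(\tilde p)=0$ for triples $J\ni 0$ and then reduce the remaining triples to these (the paper packages this as the identity $L_{ijk}=\lambda_k L_{0ij}-\lambda_j L_{0ik}+\lambda_i L_{0jk}$, which is exactly your substitution-and-cancel step), and you close with the same degree sandwich $n\le\deg(\gamma)\le n$ together with Proposition~\ref{prop:pure_dim}. The only cosmetic difference is that you invoke Lemma~\ref{lem:bezoutian} explicitly to identify $\gamma(V_0)=B_{\chi,D}(1,\lambda_1)$ for non-degeneracy, whereas the paper reads this off directly from Corollary~\ref{cor:u_independence_divisor}.
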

\begin{proof}
By Corollary \ref{cor:det_vanish} we have that for every $1 \leq i < j \leq d$ and every affine point of $C$ we have that:
$$
\left( \lambda_i(p) \gamma_{0j} - \lambda_j(p) \gamma_{0i} + \gamma_{ij} \right) \buc(p) = 0.
$$
Let us write for $0 \leq i < j < k \leq d$, $L_{ijk} = \lambda_k \gamma_{ij} - \lambda_j \gamma_{ik} + \lambda_i \gamma_{jk}$. Then one notes that for every $1 \leq i < j < k \leq d$, we have that:
$$
L_{ijk} = \lambda_k L_{0ij} - \lambda_j L_{0ik} + \lambda_i L_{0jk}.
$$
Hence for every $p \in X$ not a pole of the $\lambda_j$ we have that $\iota(p) \in D(\gamma)$. Hence $C \subset D(\gamma)$, since $C$ is the Zariski closure of its affine part and $D(\gamma)$ is closed.

Now by Corollary \ref{cor:u_independence_divisor} we have that for a generic hypersurface of the form $\mu_1 = z$, intersecting $C$ in $n$-distinct affine points, $q_1,\ldots,q_n$, the vectors $\buc(q_j)$ are a basis for $\C^n$. Hence we have that generically the kernel of $\gamma$ is one-dimensional. Now $\deg \gamma \leq n$ on the other hand since $C \subset D(\gamma)$ we have that $\deg \gamma \geq n$, conclude that $\deg \gamma = n$ and therefore, $\gamma$ is \vr{}. Furthermore this implies that $D(\gamma)$ is of pure dimension $1$ and of degree $n$. Conclude that $D(\gamma) = C$ as sets.
\end{proof}

\begin{rem}
Note that if pull back $\cK$, the kernel sheaf of the determinantal representation, to the normalization and mod out torsion we will get $\chi \otimes \Delta$ (up to a twist).
\end{rem}

We will finish this section with some examples of the construction in the case of genus $0$ curves.

\begin{ex}
Using the methods of \cite[Ch.\ 9]{LKMV} we get that the following matrices are a realization of the twisted cubic curve:
\begin{align*}
& \gamma_{01} = \begin{pmatrix} 1 & 0 & 0 \\ 0 & 0 & 0 \\ 0 & 0 & 0 \end{pmatrix} \,,\, \gamma_{02} = \begin{pmatrix}
0 & 1 & 0 \\ 1 & 0 & 0 \\ 0 & 0 & 0 \end{pmatrix} \,,\, \gamma_{03} = \begin{pmatrix} 0 & 0 & 1 \\ 0 & 1 & 0 \\ 1 & 0 & 0\end{pmatrix}, \\
& \gamma_{12} = \begin{pmatrix} 0 & 0 & 0 \\ 0 & 1 & 0 \\ 0 & 0 & 0 \end{pmatrix} \,,\, \gamma_{13} = \begin{pmatrix} 0 & 0 & 0 \\ 0 & 0 & 1 \\ 0 & 1 & 0\end{pmatrix} \,,\, \gamma_{23} = \begin{pmatrix} 0 & 0 & 0 \\ 0 & 0 & 0 \\ 0 & 0 & 1 \end{pmatrix}.
\end{align*}
\end{ex}

\begin{ex}
Similarly one obtain for a cuspidal plane monomial quintic the Livsic-type determinantal representation:
\begin{align*}
& \gamma_{01} = \begin{pmatrix}
0 & 0 & 1 & 0 & 0 \\ 0 & 1 & 0 & 0 & 0 \\ 1 & 0 & 0 & 0 & 0 \\ 0 & 0 & 0 & 0 & 0 \\ 0 & 0 & 0 & 0 & 0
\end{pmatrix} \,,\, \gamma_{02} = \begin{pmatrix}
0 & 0 & 0 & 1 & 0 \\ 0 & 0 & 1 & 0 & 0 \\ 0 & 1 & 0 & 0 & 0 \\ 1 & 0 & 0 & 0 & 0 \\ 0 & 0 & 0 & 0 & 0 
\end{pmatrix} \,,\, \gamma_{03} = \begin{pmatrix}
0 & 0 & 0 & 0 & 1 \\ 0 & 0 & 0 & 1 & 0 \\ 0 & 0 & 1 & 0 & 0 \\ 0 & 1 & 0 & 0 & 0 \\ 1 & 0 & 0 & 0 & 0
\end{pmatrix}, \\
& \gamma_{12} = \begin{pmatrix}
0 & 0 & 0 & 0 & 0 \\ 0 & 0 & 0 & 0 & 0 \\ 0 & 0 & 0 & 0 & 0 \\ 0 & 0 & 0 & -1 & 0 \\ 0 & 0 & 0 & 0 & 0
\end{pmatrix} \,,\, \gamma_{13} = \begin{pmatrix}
0 & 0 & 0 & 0 & 0 \\ 0 & 0 & 0 & 0 & 0 \\ 0 & 0 & 0 & 0 & 0 \\ 0 & 0 & 0 & 0 & -1 \\ 0 & 0 & 0 & -1 & 0
\end{pmatrix} \,,\, \gamma_{23} = \begin{pmatrix}
0 & 0 & 0 & 0 & 0 \\ 0 & 0 & 0 & 0 & 0 \\ 0 & 0 & 0 & 0 & 0 \\ 0 & 0 & 0 & 0 & 0 \\ 0 & 0 & 0 & 0 & -1
\end{pmatrix}.
\end{align*}
However the scheme has an embedded point at the singularity. The affine primary decomposition is given by:
\begin{equation*}
I = (y^2-xz, x^2 y - z^2, x^3 - yz) \cap (z, y^3, x y^2, x^3 y, x^4).
\end{equation*}
The last ideal is $(x,y,z)$-primary and hence $(x,y,z)$ is an embedded prime.
\end{ex}

\begin{ex}
The following is an example of a smooth, but not projectively normal rational curve in $\pp^3$ obtained via the map $(1,t,t^2,t^3)$.
\begin{align*}
& \gamma_{01} = \begin{pmatrix}
1 & 0 & 0 & 0 \\ 0 & 0 & 0 & 0 \\ 0 & 0 & 0 & 0 \\ 0 & 0 & 0 & 0 \end{pmatrix} \,,\, \gamma_{02} = \begin{pmatrix}
0 & 0 & 1 & 0 \\ 0 & 1 & 0 & 0 \\ 1 & 0 & 0 & 0 \\ 0 & 0 & 0 & 0 \end{pmatrix} \,,\, \gamma_{03} = \begin{pmatrix}
0 & 0 & 0 & 1 \\ 0 & 0 & 1 & 0 \\ 0 & 1 & 0 & 0 \\ 1 & 0 & 0 & 0 \end{pmatrix}, \\
& \gamma_{12} = \begin{pmatrix}
0 & 0 & 0 & 0 \\ 0 & 0 & 1 & 0 \\ 0 & 1 & 0 & 0 \\ 0 & 0 & 0 & 0 \end{pmatrix} \,,\, \gamma_{13} = \begin{pmatrix}
0 & 0 & 0 & 0 \\ 0 & 0 & 0 & 1 \\ 0 & 0 & 1 & 0 \\ 0 & 1 & 0 & 0 \end{pmatrix} \,,\, \gamma_{23} = \begin{pmatrix}
0 & 0 & 0 & 0 \\ 0 & 0 & 0 & 0 \\ 0 & 0 & 0 & 0 \\ 0 & 0 & 0 & 1 \end{pmatrix}.
\end{align*}
\end{ex}

\section{Hyperbolic Curves in $\pp^d$} \label{sec:hyper_curve}

From now let us assume that $C$ is a real curve, then the involution on $\pp^d$ obtained from complex conjugation of coordinates, induces an anti-holomorphic involution on $X$. Note that $\dim H^0(X,\cL) \geq d+1$, in particular if $W \subset H^0(X,\cL)$ is the subspace spanned by the real sections $\mu_0,\ldots,\mu_d$, then in fact $\iota$ is a map from $X$ to $\pp W^*$. We identify $\pp W^*$ with $\pp W$, by setting the basis $\mu_0,\ldots,\mu_d$ to be orthonormal. Furthermore a section $\nu \in W$ is real if and only if it is a linear combination of the $\mu_j$ with real coefficients. Let us assume that there exists a real linear subspace $V \subset \pp W$ of dimension $d-2$, such that $C$ is hyperbolic with respect to $V$, then we have that:

\begin{lem} \label{lem:hyper_dividing}
There exist real $\nu_0, \nu_1 \in H^0(\tilde{X},\cL)$, such that the meromorphic function $\lambda$, on $X$, defined by $\lambda = \nu_1/\nu_0$ is dividing. In particular $X$ is dividing.
\end{lem}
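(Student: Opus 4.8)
The plan is to realize $\lambda$ as the meromorphic function on $X$ induced by the linear projection from $V$ onto $V^{\perp}\cong\pp^1$, and then to read off its dividing property from the hyperbolicity of $C$ with respect to $V$; the one subtle point will be the passage from the curve $C$ to its normalization $X$.

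\emph{Setup.} Since $V\subset\pp W$ has dimension $d-2$ and $\iota\colon X\to\pp W^{*}\cong\pp W$, the space of linear forms vanishing on $V$ is two-dimensional, and since $V$ is real it is spanned by two real sections $\nu_0,\nu_1\in W\subset H^0(X,\cL)$. They have no common zero on $X$, for a common zero $p$ would give $\iota(p)\in V$, contradicting $V\cap C=\emptyset$ (which holds since $C$ is hyperbolic with respect to $V$). Set $\lambda=\nu_1/\nu_0$. Because $\nu_0,\nu_1$ are real and $\tau$ is induced by complex conjugation, $\overline{\lambda(\tau p)}=\lambda(p)$ for all $p$, i.e. $\lambda$ is a real meromorphic function. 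The composition of $\iota$ with the projection from $V$ onto $V^{\perp}$ is $p\mapsto(\nu_0(p):\nu_1(p))$, so this projection is realized by $\lambda$, and for $c\in\pp^1$ one has $\lambda^{-1}(c)=\iota^{-1}(C\cap H_c)$, where $H_c=\{\nu_1-c\nu_0=0\}$ is the hyperplane through $V$ projecting to $c$; note $H_c$ is real when $c$ is real and is one of the subspaces $U$ occurring in the definition of hyperbolicity.

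\emph{The two halves of dividing.} If $p\in X(\R)$ then $\lambda(p)=\lambda(\tau p)=\overline{\lambda(p)}$, so $\lambda(p)\in\pp^1(\R)$; this direction is immediate (it is also one direction of property $(\star)$ of Proposition \ref{prop:dividing_general}). For the converse, let $\lambda(p)=c\in\pp^1(\R)$. Then $\iota(p)\in C\cap H_c\subset C(\R)$ by hyperbolicity (equivalently, by the other direction of $(\star)$). The remaining, and only genuinely delicate, step is to upgrade $\iota(p)\in C(\R)$ to $p\in X(\R)$: a priori $p$ and $\tau p$ could be distinct points of $X$ both lying over a real singular point $x_0=\iota(p)$ of $C$ whose two branches are complex conjugate, and then $\lambda(p)$ would be real with $p$ not real. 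This is the main obstacle.

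\emph{Ruling out the bad lift.} Suppose $p\notin X(\R)$, so $\tau p\neq p$, and choose a small disc $N\ni p$ in $X$ with $\overline N\cap\tau(\overline N)=\emptyset$. The function $\lambda$ is non-constant on the component of $X$ containing $p$: otherwise some component of $C$ would lie in a hyperplane $H$ through $V$, contradicting hyperbolicity — if $H$ is real then $C\cap H$ is infinite and not contained in $C(\R)$, and if $H$ is not real then $C$ has a non-real component, so a generic real hyperplane through $V$ meets it in non-real points. Hence $\lambda(N)$ is open, so it contains infinitely many real values $c'\neq c$. For each such $c'$ choose $q\in N$ with $\lambda(q)=c'$; then $\iota(q)\in C\cap H_{c'}\subset C(\R)$, so $\iota(\tau q)=\overline{\iota(q)}=\iota(q)$, while $\tau q\in\tau(\overline N)$ lies outside $N$, forcing $\tau q\neq q$. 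Thus $\iota$ identifies the two distinct points $q$ and $\tau q$, so $\iota(q)$ is a singular point of $C$ and $q\in\iota^{-1}(\operatorname{Sing}C)$. Letting $c'$ range over infinitely many distinct real values in $\lambda(N)$ produces infinitely many distinct such $q$ inside the finite set $\iota^{-1}(\operatorname{Sing}C)$, a contradiction. Therefore $p\in X(\R)$, so $\lambda$ is dividing; in particular $X$ is dividing by the observation following the definition of a dividing function.
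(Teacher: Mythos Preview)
Your proof is correct and follows essentially the same approach as the paper's: take $\nu_0,\nu_1$ spanning the linear forms vanishing on $V$, set $\lambda=\nu_1/\nu_0$, and read off the dividing property from hyperbolicity via the fibers $\lambda^{-1}(c)=\iota^{-1}(C\cap H_c)$. The one difference is that the paper simply asserts the step ``$\iota(p)\in C(\R)$ hence $p\in X(\R)$'' without comment, whereas you correctly flag and resolve the issue of complex conjugate branches over a real singular point of $C$; your argument (infinitely many nearby real values of $\lambda$ would force infinitely many preimages of $\operatorname{Sing} C$) is a clean way to close that gap.
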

\begin{proof}
Consider $V \subset H^0(X,\cL)$ and assume at first that $V$ is spanned by $\mu_2,\ldots,\mu_d$. Then every real hypersurface containing $V$ is spanned by $s \mu_0 + t \mu_1$ and $V$, where $s,t \in \R$ not both zero. Set $\lambda = \mu_1/\mu_0$. Clearly if $p \in X(\R)$, then $\lambda(p)$ is real. On the other hand if $\lambda(p)$ is real then either $\lambda(p) = \alpha \in \R$ or $\lambda(p) = \infty$, then take the hyperplane $H$ spanned by $\mu_0 + \alpha \mu_1$ and $V$ in the first case and $mu_1$ and $V$ in the second case. Observe that by hyperbolicity $H \cap C \subset C(\R)$. Now $\iota(p) = (\mu_0(p),\ldots,\mu_d(p))$, in particular $\iota(p) \in H \cap C$ and hence is real and therefore $p \in X(\R)$.

Now if $V$ is spanned by real sections $\nu_2,\ldots,\nu_d$. We can complete this set to a a real basis of $H^0(X,\cL)$, by adding two more sections $\nu_0^{\prime}$ and $\nu_1^{\prime}$. Now since hyperbolicity is invariant under real coordinate changes, we see that we get the required $\nu_0$ and $\nu_1$ by pulling back $\mu_0$ and $\mu_1$.
\end{proof}

This discussion leads us to the main result of this section.

\begin{thm} \label{thm:hyp-liv-det-rep}
The curve $C$ admits \vr{} Hermitian Livsic type determinantal representations $\gamma$, parametrized by flat unitary line bundles in $\T_0$, a real subtorus of the Jacobian variety of the desingularizing Riemann surface of $C$, such that for every real $d-2$-dimensional linear subspace $U \subset \pp^d$, we have that $\gamma(U)$ is definite if and only if $C$ is hyperbolic with respect to $U$. In particular if the line bundle is two-torsion, then the resulting determinantal representation is real symmetric.
\end{thm}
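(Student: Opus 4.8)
The plan is to run the construction behind Theorem~\ref{thm:vr_rep_curves} in coordinates adapted to a witness $V$ of hyperbolicity and with the flat unitary bundle $\chi$ taken in $\T_0$, and then to read off the three asserted properties from Sections~\ref{sec:bezoutian}--\ref{sec:div_func}. Concretely, since $C$ is hyperbolic with respect to some real $d-2$-plane $V$, Lemma~\ref{lem:hyper_dividing} lets me pick a real basis $\mu_0,\dots,\mu_d$ of the span $W\subset H^0(X,\cL)$ with $\mu_2,\dots,\mu_d$ spanning $V$ and with $\lambda_1=\mu_1/\mu_0$ dividing; since $C$ is hyperbolic with respect to $V$ every real hyperplane through $V$ meets $C$ only in real points, so I may further choose $\mu_0$ and $\mu_1$ to cut out generic real hyperplanes through $V$, whence $\{\mu_0=0\}$ meets $C$ in $n$ distinct non-singular points and $\mu_0,\mu_1$ have no common zero. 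By Proposition~\ref{prop:dividing_func} the divisor $D=(\mu_0)_0$ is then reduced and supported on $X(\R)$, and all the $\lambda_j=\mu_j/\mu_0$ are real, so the hypotheses of Section~\ref{sec:det_rep_curve} hold.

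For each flat unitary $\chi\in\T_0$ with $h^0(\chi\otimes\Delta)=0$, form $\gamma$ with $\gamma_{ij}=B_{\chi,D}(\lambda_i,\lambda_j)$. Theorem~\ref{thm:vr_rep_curves} gives that $\gamma$ is a \vr{} Livsic-type determinantal representation of $C$; since $D$ and the $\lambda_j$ are real, Proposition~\ref{prop:bezoutian_hermitian} with $v=0$ (so the signature matrix is the identity) makes each $\gamma_{ij}$ Hermitian; and if in addition $\varphi(\chi)$ is a half-period, Proposition~\ref{prop:bezotian_symmetric} makes each $\gamma_{ij}$ complex symmetric, hence, being also Hermitian, real symmetric. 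This settles the ``Hermitian'' and ``real symmetric'' clauses once one knows $\T_0$ contains such $\chi$ (a dense open condition for the Hermitian case, and a finite-set existence statement, as in \cite{Vin93}, for the two-torsion case). It remains to prove the definiteness criterion for an arbitrary real $d-2$-plane $U$.

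If $\gamma(U)$ is definite, then replacing $\gamma$ by $-\gamma$ if necessary (again a \vr{} Hermitian representation of $C$) I may assume $\gamma(U)>0$, and Proposition~\ref{prop:definite-hyper} gives that $C$ is hyperbolic with respect to $U$. Conversely, suppose $C$ is hyperbolic with respect to $U$. Then $U\cap C=\emptyset$; since $\gamma$ is \vr{}, Theorem~\ref{thm:char1} identifies the degeneracy locus of the vector-bundle map $\gamma(V)$ on $\Gr(d-2,d)$ with the hypersurface of $d-2$-planes meeting $C$, so $\det\gamma(U)\neq 0$. By Lemma~\ref{lem:bezoutian}, $\gamma(U)=B_{\chi,D}(\kappa_0,\kappa_1)$, where $\kappa_0,\kappa_1$ are the linear combinations of the $\lambda_j$ whose coefficients are the Pl\"ucker data of $U^\perp$; since $U$, hence $U^\perp$, is real, $\kappa_0$ and $\kappa_1$ are real elements of $\cL(D)$. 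The projection of $C$ from $U$ onto $U^\perp\cong\pp^1$ is $p\mapsto(\kappa_0(p):\kappa_1(p))$, so by Proposition~\ref{prop:dividing_general} hyperbolicity with respect to $U$ is precisely the statement that $\kappa_1/\kappa_0$ is dividing. Now Proposition~\ref{prop:bezout_definite} applies ($\chi\in\T_0$, $D$ reduced and real, $\kappa_0,\kappa_1\in\cL(D)$ real, $\kappa_1/\kappa_0$ dividing, and $B_{\chi,D}(\kappa_0,\kappa_1)=\gamma(U)$ invertible) and yields $\gamma(U)\geq 0$; invertibility then forces $\gamma(U)$ to be positive definite. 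Since this argument is uniform in $U$, the single representation $\gamma$ detects hyperbolicity with respect to every real $d-2$-plane.

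The step I expect to be the main obstacle is the converse direction just sketched: identifying $\gamma(U)$ with the Bezoutian of an honestly dividing pair of functions in $\cL(D)$ while simultaneously controlling its invertibility --- that is, arranging coordinates so that Lemma~\ref{lem:bezoutian}, Proposition~\ref{prop:dividing_general}, Theorem~\ref{thm:char1} and Proposition~\ref{prop:bezout_definite} all apply at once. A lesser but genuine point is verifying the existence of a suitable two-torsion $\chi\in\T_0$ with $h^0(\chi\otimes\Delta)=0$, needed only for the real symmetric refinement.
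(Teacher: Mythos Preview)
Your proof is correct and follows essentially the same route as the paper: construct $\gamma$ via Bezoutians as in Theorem~\ref{thm:vr_rep_curves} with $\chi\in\T_0$, invoke Proposition~\ref{prop:bezoutian_hermitian} for the Hermitian property, Proposition~\ref{prop:definite-hyper} for the forward implication, and for the converse write $\gamma(U)=B_{\chi,D}(\kappa_0,\kappa_1)$ via Lemma~\ref{lem:bezoutian}, observe $\kappa_1/\kappa_0$ is dividing, and apply Proposition~\ref{prop:bezout_definite}. Your version is in fact more careful than the paper's in making explicit why $D$ is supported on $X(\R)$ (so that Proposition~\ref{prop:bezoutian_hermitian} applies) and in citing Theorem~\ref{thm:char1} for the invertibility of $\gamma(U)$.

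One small point where you are more cautious than necessary: the condition $h^0(\chi\otimes\Delta)=0$ is not merely a dense open condition on $\T_0$ but holds for \emph{every} $\chi\in\T_0$, by \cite[Cor.~4.3]{Vin93}, which the paper cites. This also disposes of your worry about the two-torsion case, since the two-torsion points of $\T_0$ automatically satisfy the hypothesis.
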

\begin{proof}
Fix $\chi$ a flat unitary line bundle on $\T_0$, by \cite[Cor.\ 4.3]{Vin93} $h^0(\chi \otimes \Delta) = 0$. Then by Proposition \ref{prop:bezoutian_hermitian}, we have that each $\gamma_{ij}$ is Hermitian, since the $\lambda_j$ are real functions with real poles.

It suffices to prove that if $C$ is hyperbolic with respect to $U$ then $\gamma(U)$ is definite, since the converse has already been proved. However, we note that by Lemma \ref{lem:bezoutian} we have that for every such $U$ there exist two functions $\kappa_0$ and $\kappa_1$, such that by Lemma \ref{lem:hyper_dividing} $\lambda = \kappa_1/\kappa_0$ is dividing. Since $\gamma$ is \vr{} we note that $U \cap C = \emptyset$ implies that $\gamma(U)$ is invertible. Now we note that $\gamma(U) = B_{\chi,D}(\kappa_0,\kappa_1)$ and hence we can apply Proposition \ref{prop:bezout_definite} to get the result. To get a real symmetric representation apply Propostion \ref{prop:bezotian_symmetric}.
\end{proof}

If $C$ admits a \vr{} Hermitian Livsic type determinantal representation $\gamma$, then so does the hypersurface, $Y$, corresponding to it via the incidence correspondence. Now we can lift the determinantal representation to some hypersurface, $Y^{\prime}(\gamma) \subset \pp^N$. If $C$ is hyperbolic with respect to some real $d-2$-dimensional linear subspace $V \subset \pp^d$, then by the above theorem $\gamma(V)$ is definite; we conclude that $Y^{\prime}(\gamma)$ is hyperbolic with respect to $V$. On the other hand if $Y^{\prime}(\gamma)$ is hyperbolic with respect to $V$ then so is $Y$, and thus by Proposition \ref{prop:hyperbolic-equiv} $C$ is hyperbolic with respect to $V$; applying the theorem again we see that $\gamma(V)$ is definite. Therefore we obtain:

\begin{cor}
Set $\cH(Y) = \left\{ V \in \Gr(d-2,d) \mid Y \mbox{ is hyperbolic w.r.t. } V \right\}$. and similarly $\cH(Y^{\prime}) = \left\{ U \in \pp^N \mid Y^{\prime} \mbox{ is hyperbolic w.r.t. } U \right\}$, then:
$$
\cH(Y) = \Gr(d-2,d) \cap \cH(Y^{\prime}).
$$
In particular the cone over $\cH(Y)$ is a disjoint union of two extendably convex connected components in the sense of Buseman.
\end{cor}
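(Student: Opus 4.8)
The plan is to transfer the whole statement to the lifted hypersurface $Y'=Y'(\gamma)\subset\pp^N$ and then invoke the classical theory of hyperbolic hypersurfaces carrying a definite Hermitian determinantal representation. First I would record the representation of $Y'$: by \eqref{eq:gamma_v_coord} the matrix $\gamma(V)$ is linear in the Pl\"ucker coordinates $x=(p(V)_J)$ of $\pp^N$, with coefficient matrices the $\pm\gamma_I$, which are Hermitian; hence $p(x):=\det\gamma(V)$ is a degree-$n$ form on $\pp^N$ of the shape $\det\bigl(\sum_J x_J A_J\bigr)$ with $A_J$ Hermitian, i.e. $Y'$ carries a Hermitian determinantal representation $\Gamma(x)=\sum_J x_J A_J$ with $\Gamma(\hat V)=\gamma(V)$ for any lift $\hat V$ of $V$. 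By Theorem \ref{thm:char1}, $Y=|W|=Y'\cap\Gr(d-2,d)$; and when $C$ is irreducible $Y'$ is itself irreducible, since $p$ restricts on $\Gr(d-2,d)$ to its reduced irreducible equation (Theorem \ref{thm:char1}, Lemma \ref{lem:chow-degree}) without vanishing on $\Gr(d-2,d)$, and the Pl\"ucker ring is factorial.

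Next I would prove $\cH(Y)=\Gr(d-2,d)\cap\cH(Y')$, which is the chain of implications sketched before the statement. If $Y$ is hyperbolic with respect to a real $d-2$-plane $V$ then so is $C$ (Proposition \ref{prop:hyperbolic-equiv}), hence $\gamma(V)=\Gamma(\hat V)$ is definite (Theorem \ref{thm:hyp-liv-det-rep}); then for every real $U$ the pencil $\Gamma(\hat V)^{-1}\Gamma(\hat U)$ is similar to a Hermitian matrix, so has only real eigenvalues, and $p$ is hyperbolic with respect to $V$ by the argument of Proposition \ref{prop:definite-hyper}; thus $V\in\cH(Y')$. Conversely, every real $1$-dimensional Schubert cycle $L$ through $V$ in $\Gr(d-2,d)$ is a real line of $\pp^N$ through $V$, and for such $L$ one has $Y\cap L=Y'\cap L$, all of whose points lie on $\Gr(d-2,d)$, hence in $Y(\R)=Y'(\R)\cap\Gr(d-2,d)(\R)$ as soon as $Y'$ is hyperbolic with respect to $V$; by Proposition \ref{prop:hyperbolic-equiv} this gives $V\in\cH(Y)$. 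This proves the displayed equality.

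To prove the last assertion, I would first combine Proposition \ref{prop:hyperbolic-equiv} and Theorem \ref{thm:hyp-liv-det-rep} into the direct description $\cH(Y)=\{V\in\Gr(d-2,d)(\R):\gamma(V)\ \text{definite}\}$. Set $\cS_\pm=\{x\in\R^{N+1}:\pm\Gamma(x)\succ0\}$; these are disjoint open convex cones, exchanged by $x\mapsto-x$, each with topological boundary inside $\{p=0\}$ (on the boundary $\Gamma$ is semidefinite and singular). Hence the cone over $\cH(Y)$ equals $\bigl(\widehat{\Gr}\cap\cS_+\bigr)\sqcup\bigl(\widehat{\Gr}\cap\cS_-\bigr)$, where $\widehat{\Gr}$ denotes the cone over $\Gr(d-2,d)(\R)$. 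Each summand is the intersection of $\widehat{\Gr}$ with an open convex cone, hence extendably convex in Buseman's sense; they lie in the disjoint open sets $\cS_+$ and $\cS_-$, so they form a separation of the cone over $\cH(Y)$ and are exchanged by $x\mapsto-x$. (G\"{a}rding's theorem, applicable because $Y'$ is a hyperbolic hypersurface, irreducible when $C$ is, identifies $\cS_+\sqcup\cS_-$ with the cone over all of $\cH(Y')$, consistently with the previous paragraph.)

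Most of this is bookkeeping around the embedding $\Gr(d-2,d)\hookrightarrow\pp^N$ and is essentially contained in the discussion preceding the statement. The one point that needs care — and the real content beyond that discussion — is that each of the two extendably convex pieces above be \emph{connected}, so that they are literally connected components rather than merely a separation of the cone over $\cH(Y)$ into two parts. This amounts to connectedness of the witness set $\cH(Y)$, i.e. to the fact that all witnesses lie in a single component of $\Gr(d-2,d)(\R)\setminus Y(\R)$, and I would argue it as in the classical hypersurface case (using convexity of $\cS_+$); this is the step I expect to be the main obstacle.
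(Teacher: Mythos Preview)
Your approach is exactly the paper's: the entire argument is the short discussion paragraph immediately preceding the corollary, and you have reproduced it (with more care about the irreducibility of $Y'$ and the identification $\cH(Y')$ with the definiteness locus of $\Gamma$). The displayed equality $\cH(Y)=\Gr(d-2,d)\cap\cH(Y')$ and the extendable convexity both go through precisely as you wrote.

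Your hesitation at the end is well placed and worth spelling out. The paper's proof does \emph{not} establish that each of the two pieces $\widehat{\Gr}\cap\cS_\pm$ is connected; it only shows that each is the intersection of the Grassmannian cone with a convex set, hence extendably convex. In fact the paper says so explicitly a page earlier (end of Section~\ref{sec:hyper}): ``we do not know yet whether this set coincides with $C(V)$ even in that case.'' So the phrase ``connected components'' in the corollary should be read loosely, as ``two pieces exchanged by $x\mapsto -x$,'' not as a claim that the witness set $\cH(Y)$ is connected. You should not try to prove connectedness here --- it is left open --- and your write-up is already at least as complete as the paper's own argument.
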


\nocite{M2}
\bibliographystyle{plain}
\bibliography{Hyperbolic_Curves_in_Pn}

\def\cprime{$'$} \def\cprime{$'$} \def\cprime{$'$}
\begin{thebibliography}{10}

\bibitem{Ahl50}
Lars~L. Ahlfors.
\newblock Open {R}iemann surfaces and extremal problems on compact subregions.
\newblock {\em Comment. Math. Helv.}, 24:100--134, 1950.

\bibitem{Ahl47}
Lars~V. Ahlfors.
\newblock Bounded analytic functions.
\newblock {\em Duke Math. J.}, 14:1--11, 1947.

\bibitem{AV}
Daniel Alpay and Victor Vinnikov.
\newblock Indefinite {H}ardy spaces on finite bordered {R}iemann surfaces.
\newblock {\em J. Funct. Anal.}, 172(1):221--248, 2000.

\bibitem{AlpVin02}
Daniel Alpay and Victor Vinnikov.
\newblock Finite dimensional de {B}ranges spaces on {R}iemann surfaces.
\newblock {\em J. Funct. Anal.}, 189(2):283--324, 2002.

\bibitem{bvh}
Joseph~A. Ball and Victor Vinnikov.
\newblock Discrete-time 2{D} overdetermined linear systems: System-theoretic
  properties and transfer-function {H}ankel realization for meroporphic bundle
  maps on a compact {R}iemann surface.
\newblock {\em in preparation}.

\bibitem{BV-ZPT}
Joseph~A. Ball and Victor Vinnikov.
\newblock Zero-pole interpolation for meromorphic matrix functions on an
  algebraic curve and transfer functions of {$2$}{D} systems.
\newblock {\em Acta Appl. Math.}, 45(3):239--316, 1996.

\bibitem{BV-ZPF}
Joseph~A. Ball and Victor Vinnikov.
\newblock Zero-pole interpolation for matrix meromorphic functions on a compact
  {R}iemann surface and a matrix {F}ay trisecant identity.
\newblock {\em Amer. J. Math.}, 121(4):841--888, 1999.

\bibitem{BGLS01}
Heinz~H. Bauschke, Osman G{\"u}ler, Adrian~S. Lewis, and Hristo~S. Sendov.
\newblock Hyperbolic polynomials and convex analysis.
\newblock {\em Canad. J. Math.}, 53(3):470--488, 2001.

\bibitem{Bea00}
Arnaud Beauville.
\newblock Determinantal hypersurfaces.
\newblock {\em Michigan Math. J.}, 48:39--64, 2000.
\newblock Dedicated to William Fulton on the occasion of his 60th birthday.

\bibitem{CAG}
Grigoriy Blekherman, Pablo~A. Parrilo, and Rekha~R. Thomas, editors.
\newblock {\em Semidefinite optimization and convex algebraic geometry},
  volume~13 of {\em MOS-SIAM Series on Optimization}.
\newblock Society for Industrial and Applied Mathematics (SIAM), Philadelphia,
  PA; Mathematical Optimization Society, Philadelphia, PA, 2013.

\bibitem{Bra11}
Petter Br{\"a}nd{\'e}n.
\newblock Obstructions to determinantal representability.
\newblock {\em Adv. Math.}, 226(2):1202--1212, 2011.

\bibitem{Bus61}
Herbert Busemann.
\newblock Convexity on {G}rassmann manifolds.
\newblock {\em Enseignement Math. (2)}, 7:139--152 (1962), 1961.

\bibitem{ChvdW37}
W.~L. Chow and B.L. van~der Waerden.
\newblock Uber zugeordenere formen und algebraische systeme von algebraischen
  mannifaltigkeiten.
\newblock {\em Math. Ann.}, 113:692--704, 1937.

\bibitem{Dic21}
Leonard~Eugene Dickson.
\newblock Determination of all general homogeneous polynomials expressible as
  determinants with linear elements.
\newblock {\em Trans. Amer. Math. Soc.}, 22(2):167--179, 1921.

\bibitem{D1900}
A.~Dixon.
\newblock Note on the reduction of a ternary quartic to a symmetrical
  determinant.
\newblock {\em Proc. Cambridge Phil. Soc.}, 11:350--351, 1900--1902.

\bibitem{Dub83}
B.~A. Dubrovin.
\newblock Matrix finite zone operators.
\newblock {\em Contemporary Problems of Mathematics (Itogi Nauki i Techniki)},
  23:33--78, 1983.

\bibitem{Fay73}
John~D. Fay.
\newblock {\em Theta functions on {R}iemann surfaces}.
\newblock Lecture Notes in Mathematics, Vol. 352. Springer-Verlag, Berlin,
  1973.

\bibitem{Ful98}
William Fulton.
\newblock {\em Intersection theory}, volume~2 of {\em Ergebnisse der Mathematik
  und ihrer Grenzgebiete. 3. Folge. A Series of Modern Surveys in Mathematics
  [Results in Mathematics and Related Areas. 3rd Series. A Series of Modern
  Surveys in Mathematics]}.
\newblock Springer-Verlag, Berlin, second edition, 1998.

\bibitem{Ga51}
Lars G{\aa}rding.
\newblock Linear hyperbolic partial differential equations with constant
  coefficients.
\newblock {\em Acta Math.}, 85:1--62, 1951.

\bibitem{Ga59}
Lars G{\.a}rding.
\newblock An inequality for hyperbolic polynomials.
\newblock {\em J. Math. Mech.}, 8:957--965, 1959.

\bibitem{GKZ08}
I.~M. Gelfand, M.~M. Kapranov, and A.~V. Zelevinsky.
\newblock {\em Discriminants, resultants and multidimensional determinants}.
\newblock Modern Birkh\"auser Classics. Birkh\"auser Boston, Inc., Boston, MA,
  2008.
\newblock Reprint of the 1994 edition.

\bibitem{GLR}
Israel Gohberg, Peter Lancaster, and Leiba Rodman.
\newblock {\em Indefinite linear algebra and applications}.
\newblock Birkh\"auser Verlag, Basel, 2005.

\bibitem{M2}
Daniel~R. Grayson and Michael~E. Stillman.
\newblock Macaulay2, a software system for research in algebraic geometry.
\newblock Available at http://www.math.uiuc.edu/Macaulay2/.

\bibitem{Gu97}
Osman G{\"u}ler.
\newblock Hyperbolic polynomials and interior point methods for convex
  programming.
\newblock {\em Math. Oper. Res.}, 22(2):350--377, 1997.

\bibitem{Har95}
Joe Harris.
\newblock {\em Algebraic geometry}, volume 133 of {\em Graduate Texts in
  Mathematics}.
\newblock Springer-Verlag, New York, 1995.
\newblock A first course, Corrected reprint of the 1992 original.

\bibitem{HelVin07}
J.~William Helton and Victor Vinnikov.
\newblock Linear matrix inequality representation of sets.
\newblock {\em Comm. Pure Appl. Math.}, 60(5):654--674, 2007.

\bibitem{KerVin12}
Dmitry Kerner and Victor Vinnikov.
\newblock Determinantal representations of singular hypersurfaces in
  {$\mathbb{P}\sp n$}.
\newblock {\em Adv. Math.}, 231(3-4):1619--1654, 2012.

\bibitem{Kra}
Naftali Kravitsky.
\newblock Discriminant varieties and discriminant ideals for operator vessels
  in {B}anach space.
\newblock {\em Integral Equations Operator Theory}, 23(4):441--458, 1995.

\bibitem{Kum}
Mario Kummer.
\newblock Determinantal representations and the {B}\' ezout matrix.
\newblock {\em arXiv}, 1308.5560, 2014.

\bibitem{Lax58}
P.~D. Lax.
\newblock Differential equations, difference equations and matrix theory.
\newblock {\em Comm. Pure Appl. Math.}, 11:175--194, 1958.

\bibitem{LPR05}
A.~S. Lewis, P.~A. Parrilo, and M.~V. Ramana.
\newblock The {L}ax conjecture is true.
\newblock {\em Proc. Amer. Math. Soc.}, 133(9):2495--2499 (electronic), 2005.

\bibitem{LKMV}
M.~S. Liv{\v{s}}ic, N.~Kravitsky, A.~S. Markus, and V.~Vinnikov.
\newblock {\em Theory of commuting nonselfadjoint operators}, volume 332 of
  {\em Mathematics and its Applications}.
\newblock Kluwer Academic Publishers Group, Dordrecht, 1995.

\bibitem{MSS14}
A.~Markus, D.~A. Spielman, and Srivastava N.
\newblock Interlacing families ii: Mixed characteristic polynomials and the
  {K}adison-{S}inger problem.
\newblock {\em arXiv}, 1306.3969, 2014.

\bibitem{Nem06}
Arkadi Nemirovski.
\newblock Advances in convex optimization: conic programming.
\newblock In {\em International {C}ongress of {M}athematicians. {V}ol. {I}},
  pages 413--444. Eur. Math. Soc., Z\"urich, 2007.

\bibitem{NN94}
Yurii Nesterov and Arkadii Nemirovskii.
\newblock {\em Interior-point polynomial algorithms in convex programming},
  volume~13 of {\em SIAM Studies in Applied Mathematics}.
\newblock Society for Industrial and Applied Mathematics (SIAM), Philadelphia,
  PA, 1994.

\bibitem{RG95}
Motakuri Ramana and A.~J. Goldman.
\newblock Some geometric results in semidefinite programming.
\newblock {\em J. Global Optim.}, 7(1):33--50, 1995.

\bibitem{Re06}
James Renegar.
\newblock Hyperbolic programs, and their derivative relaxations.
\newblock {\em Found. Comput. Math.}, 6(1):59--79, 2006.

\bibitem{AS}
Alexander Shapiro.
\newblock {\em Elimination Theory on an Algebraic Curve and Rational
  Transformations of Commuting Nonselfadjoint Operators}.
\newblock PhD Thesis, Weizmann Institute of Science, 1999.

\bibitem{ASV2}
Alexander Shapiro and Victor Vinnikov.
\newblock Rational transformation of commuting nonselfadjoint operators,.
\newblock {\em arXiv}, math/0511075, 2005.

\bibitem{ASV1}
Alexander Shapiro and Victor Vinnikov.
\newblock Rational transformations of algebraic curves and elimination theory,.
\newblock {\em arXiv}, math/0507233, 2005.

\bibitem{VB96}
Lieven Vandenberghe and Stephen Boyd.
\newblock Semidefinite programming.
\newblock {\em SIAM Rev.}, 38(1):49--95, 1996.

\bibitem{Vin89}
Victor Vinnikov.
\newblock Complete description of determinantal representations of smooth
  irreducible curves.
\newblock {\em Linear Algebra Appl.}, 125:103--140, 1989.

\bibitem{Vin93}
Victor Vinnikov.
\newblock Selfadjoint determinantal representations of real plane curves.
\newblock {\em Math. Ann.}, 296(3):453--479, 1993.

\bibitem{Vppf}
Victor Vinnikov.
\newblock L{MI} representations of convex semialgebraic sets and determinantal
  representations of algebraic hypersurfaces: past, present, and future.
\newblock In {\em Mathematical methods in systems, optimization, and control},
  volume 222 of {\em Oper. Theory Adv. Appl.}, pages 325--349.
  Birkh\"auser/Springer Basel AG, Basel, 2012.

\end{thebibliography}

\end{document}